\documentclass[11pt]{article}
\usepackage{amsmath,amssymb,amsthm,amsfonts,authblk,graphicx,enumerate,verbatim,appendix,subcaption,overpic}
\usepackage[labelfont=bf]{caption}
\usepackage[margin=1in]{geometry}
\usepackage{xcolor}
\usepackage{hyperref}
\usepackage[maxbibnames=99]{biblatex}
\renewbibmacro{in:}{}
\DeclareFieldFormat[article]{pages}{#1}

\def\R{\mathbb{R}}

\def\eps{\varepsilon}

\numberwithin{equation}{section}

\usepackage{multicol}
\usepackage{fancyhdr}

\usepackage{comment}

\newtheorem{theorem}{Theorem}[section]
\newtheorem{lemma}{Lemma}[section]
\newtheorem{corollary}{Corollary}[section] 
\newtheorem{hypothesis}{Hypothesis}[section]
\newtheorem{remark}{Remark}[section]
\newtheorem{proposition}{Proposition}[section]
\newtheorem{definition}{Definition}[section]

\title{Singular Turing bifurcations and spatial canard solutions
\\ in nonlinear reaction-diffusion systems
}
\author[1]{Robert Jencks}
\author[1]{Tasso J. Kaper}
\author[2]{Theodore Vo}

\affil[1]{\footnotesize Department of Mathematics and Statistics, Boston University, Boston, MA 02215, USA}
\affil[2]{\footnotesize School of Mathematics, Monash University, Clayton, Victoria 3800, Australia}
\date{September 7, 2026}

\begin{document}
\maketitle

\begin{abstract}
We study a general class of nonlinear reaction-diffusion equations that model pattern-forming systems. 
The class includes the Gierer-Meinhardt PDE, Brusselator model, and van der Pol PDE, as well as the Gray-Scott, Klausmeier, Lengyel-Epstein, Schnakenberg PDEs and others of activator-inhibitor type. 
In the limit in which the activator diffusivity is much smaller than that of the inhibitor, these PDEs exhibit singular Turing bifurcations, which have only recently begun to receive attention.
We analytically establish that the spatially-periodic solutions that emerge in both the sub-critical and super-critical cases of singular Turing bifurcations are spatially-periodic canard solutions.
These canard patterns are new types of spatially-periodic solutions that --just beyond the Turing point-- have fast-slow structure in space, rather than the classical sinusoidal profile.
In addition, their amplitude grows more rapidly than the classical square root growth for Turing patterns.
Indeed, even for parameter values that differ by one part in a hundred from the Turing point, they can have $\mathcal{O}(1)$ amplitude.
We also establish the existence of general spatially-dependent canard solutions with fast-slow structure.
Our analysis focuses on the spatial ODEs that govern the time-independent solutions. 
We show that these ODEs have a reversible folded saddle-node singularity of type II asymptotically close to the singular Turing point, that reversible folded saddles occur for parameters away from it, and that the true and faux canards of these folded singularities are the mechanisms responsible for creating the spatial canard solutions in the general class of PDEs.
\end{abstract}

\noindent
{\bf Dedication:} The authors dedicate this article to Peter Szmolyan for his 65$^{\rm th}$ birthday, in honour of his contributions to the theory and applications of folded singularities and multi-scale systems. 

\medskip 

\noindent
{\bf Key words.} 
subcritical Turing bifurcations,
supercritical Turing bifurcations,
spatially-periodic solutions,
Brusselator model,
Gierer-Meinhardt equations,
activator-inhibitor systems,
canards in PDEs,
folded saddles,
folded saddle-nodes.

\medskip 

\noindent
{\bf MSC codes.} 
Primary: 35B36, 34E17, 35K57; 
Secondary: 35B25, 35B32, 92C15

\section{Introduction}

In classical Turing bifurcations \cite{T1952}, time-independent, spatially homogeneous solutions of systems of nonlinear reaction-diffusion equations (RDE) destabilize, and steady spatially-periodic solutions are created.
They are sinusoidal solutions with small amplitude, centered about the homogeneous state, and of the form given by $e^{ik_T x}$ to leading order, where $k_T$ is the critical Turing wavenumber.
As the value of the bifurcation parameter is taken further from the Turing point, the amplitude of these periodic solutions grows proportional to the square root of the difference between the value and the Turing point. 
Furthermore, the Turing bifurcations can either be sub-critical or super-critical, in which case the newly created periodic solutions are linearly unstable or stable, respectively.
Overall, Turing bifurcations lie at the heart of pattern formation
theory in mathematics, physics, fluid dynamics, chemistry, and biology.
A small sampling of the vast scientific literature includes
\cite{AK2002,B1978,CH1993,DES1971,EK2005,EP1998,GS1997,H2006,IMD1989,K1999,K2015,K1984,LE1992,LE1991,M1989,Mu1982,NW1969,PL1968,S2003,SS1971,W1997}.
We add that Turing bifurcations can also give
rise to hexagonal patterns, spots, stripes, and other patterns, see for example \cite{AK2002,CH1993,EP1998,K2015,LE1992,LE1991,W1997}.

Rigorous mathematical theory has been developed for the existence and stability of the spatially-periodic solutions created by Turing bifurcations.
In the system of spatial ordinary differential equations that govern time-independent solutions, Turing bifurcations correspond to 1:1 resonant Hopf bifurcations, in which the equilibrium that corresponds to the homogeneous state of the RDE system possesses two coincident pairs of pure imaginary eigenvalues.
Existence of families of spatially-periodic solutions follows --in both the sub-critical and super-critical cases-- from the normal form and infinite-dimensional center manifold theory for systems with these bifurcation points, as established in \cite{HI2011,IP1993}.

Then, as to the stability of these solutions, a large set of the spatially-periodic solutions created in the super-critical case are known to be weakly nonlinearly stable \cite{CE1990,CH1993,DES1971,E1965,E1993,JZ2010,NW1969,S1983,SS1971}.
By contrast, those created 
in sub-critical Turing bifurcations are unstable.
The stability has been demonstrated using the Ginzburg-Landau equation 
$A_\tau = D A_{\chi \chi} + M A  - \mathcal{L} \vert A \vert^2 A,$
where $A(\chi,\tau): \mathbb{R} \times \mathbb{R}^+ \to \mathbb{C}, D \in \mathbb{C}, M>0, $ and
$\mathcal{L} \in \mathbb{R}$.
It is an amplitude, or modulation, equation for the (scaled) amplitude $A$ of the solutions, subject to long wavelength instabilities of the homogeneous state (here $\chi = \varepsilon x$, $\tau=\varepsilon^2 t$, with $x,t$ being the space and time variables of the 
original reaction-diffusion system and $\varepsilon$ being linearly proportional to the difference between the parameter value and the Turing value of that bifurcation parameter).
One can see that if $\mathcal{L}>0$ then the Ginzburg-Landau PDE has constant solutions with amplitude $\vert A \vert^2 = \frac{M}{\mathcal{L}}$, which represent the time-independent, spatially-periodic solutions in the original RDE system.
Moreover, in the super-critical case, they are locally attracting due to the spectral properties of the PDE linearized about them.

The stability analysis of spatially-periodic solutions culminated in the works \cite{CE1990,JZ2010,S1996,S1998}, where the full, nonlinear stability (also known as diffusive stability) was demonstrated for large classes of 
nonlinear PDEs.
In the parameter-wavenumber space, the region in which these solutions are stable is known as the Busse balloon \cite{AACS2025,B1978,D2019,K1984,N2002,SDHR2013}.
Complete treatments of the stability analysis are given in \cite{M2002} and \cite{SU2017}, and we also refer to \cite{SZJV2018} for a complete treatment of nonlinear stability specific to the Brusselator PDE.
Further general results about the validity of the Ginzburg-Landau equation
as an amplitude or modulation equation that governs
the dynamics and stability of the spatially-periodic solutions
are given in
\cite{D2019,E1993,vH1991,MS1995,SU2017}, among other references.

Recently in \cite{JDKV2026} and \cite{VDK2025}, new types of spatially-periodic solutions known as spatially-periodic canards were discovered using numerical simulations and asymptotic analysis in the Brusselator model and van der Pol PDE, respectively.
These canard solutions emerge from singular Turing bifurcations
which occur in the limit of small activator diffusivity.
The emergent periodic solutions differ structurally from the small-amplitude, spatially-periodic solutions that emerge from classical Turing bifurcations in RDE systems.
Instead of having the sinusoidal form $e^{ik x}$ to leading order with $k$ near the critical Turing wavenumber, these canard solutions are small-amplitude, spatially-periodic solutions that --just beyond the Turing point-- have distinct fast-slow structure.
They have intervals on which they vary gradually, interspersed with short spatial intervals on which they exhibit spikes and steep gradients.
Then, for parameters further from the Turing bifurcation, they exhibit long intervals on which they vary slowly interspersed with short intervals with large-amplitude jumps/layers.
Furthermore, the amplitude of these spatially-periodic canard solutions grows much faster than the square root of the distance in parameter from the Turing point.
Indeed, numerical simulations show that they can have $\mathcal{O}(1)$ amplitude already for parameters only one part in a hundred away from the Turing point. 
Most interestingly, in the super-critical case, these new spatially-periodic canard patterns have been observed in numerical simulations to be attractors of the PDEs, while near the Turing bifurcation in the sub-critical case they have been observed to be unstable states that guide the transient evolution of patterns to canard attractors, see \cite{JDKV2026,VDK2025}.

In this article, we present rigorous analysis establishing the existence of spatially-periodic canard solutions that emerge from singular Turing bifurcations, as well as the existence of more general spatial canard solutions.
We consider the following general system of RDEs,
\begin{equation}  \label{eq:genAI-UV}
\begin{split}
    \partial_{t}U &= \delta^2 \partial^2_x U + F(U,V), \\
    \partial_{t}V &= \partial^2_x V + G(U,V).
\end{split}
\end{equation} 
Here, $t>0$, $\partial_t$ denotes the partial derivative on $t$, $x \in \mathbb{R}$, $\partial^2_x$ denotes the Laplacian in $x$ that models classical diffusion, and $0<\delta \ll 1$ is a small parameter.
The dependent variables $U=U(x,t)$ and $V=V(x,t)$ denote the concentrations of two chemical species: an activator and an inhibitor, respectively.
These have non-negative values.
With $0<\delta \ll 1$, the activator diffusivity is significantly less than that of the inhibitor, which has been set to one without loss of generality.

Motivated by the numerical and formal asymptotic results for the Brusselator and van der Pol PDEs \cite{JDKV2026,VDK2025}, we perform a rigorous analysis of spatial canards and spatially-periodic canards for general systems \eqref{eq:genAI-UV} in which the null set $\{ F(U,V) = 0 \}$ is given by the graph of a function $V=V_0(U)$ and there is a non-degenerate fold point on this graph.
Hence, we recall:
\begin{definition}
Let $U_f\ge 0$ and $V_f\ge 0$ be fixed such that the point $(U_f,V_f)$ lies on the null set of $F$, {\it i.e.}, such that $V_f=V_0(U_f)$. 
We say $(U_f,V_f)$ is a non-degenerate fold point of $F$ in \eqref{eq:genAI-UV} if 
(i) $\frac{\partial F}{\partial U}(U_f,V_f)=0$,
(ii) $\frac{\partial F}{\partial V}(U_f,V_f) \ne 0$, and 
(iii) $\frac{\partial^2 F}{\partial U^2}(U_f,V_f) \ne 0$.
\end{definition}
\noindent 
Effectively, the null set of $F$ is locally quadratic in $U$ for all $V$ in a neighborhood of $V_f$.
The following proposition (which is proven in Appendix~\ref{app:prop1}) establishes the structure of the vector field in systems \eqref{eq:genAI-UV} with a non-degenerate fold point:
\begin{proposition} \label{prop1}
   Let the null set $\{ F=0 \}$ be given by the graph of $V=V_0(U)$.
   Let $(U_f,V_f)$ be a non-degenerate fold point of \eqref{eq:genAI-UV},
   with $V_f=V_0(U_f)$, and the functions $F$ and $G$ be at least $C^4$ in a neighborhood of $(U_f,V_f)$.   
   There exists a $\delta_0>0$ sufficiently small such that, for all $0< \delta < \delta_0$, there is a smooth coordinate change $\psi: (U,V) \to (u,v)$ that, in a neighborhood of $(U_f,V_f)$,
   transforms $(U,V)=(U_f,V_f)$ to $(u,v)=(0,0)$, the system \eqref{eq:genAI-UV} to the system
\begin{equation}  \label{eq:genAI}
\begin{split}
    \partial_{t}u &= \delta^2 \partial^2_x u + f(u,v), \\
    \partial_{t}v &= \partial^2_x v + g(u,v),
\end{split}
\end{equation}
with
\begin{equation}\label{eq:fg} 
\begin{split} 
f(u,v) &= - v - u^2 - \alpha_1 u v - \alpha_2 u^3 +\mathcal{O}(v^2,u^2v,u^4),\\
    g(u,v) &= \frac{1}{2}\lambda - \alpha_3 u - \alpha_4 v - \alpha_5 u^2 + \mathcal{O}(uv,v^2,u^3),
    \end{split} 
\end{equation}
and the curve $V_0(U)$ into $v_0(u)$, where
\begin{equation}\label{eq:v0}
v_0(u) = -u^2 + (\alpha_1-\alpha_2) u^3 + \mathcal{O}(u^4), \, \, {\rm as}  \, \, u \to 0.
\end{equation}
Moreover, $\lambda\ne 0$ if $G(U_f,V_f) \ne 0$, whereas $\lambda=0$ if $G(U_f,V_f)=0$.
\end{proposition}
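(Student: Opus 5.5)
The plan is to look for $\psi$ among affine diagonal changes of the dependent variables, combined with a common rescaling of $x$ and $t$. A general nonlinear change of $(U,V)$ would mix the two diffusion operators $\delta^2\partial_x^2$ and $\partial_x^2$ and destroy the form of \eqref{eq:genAI-UV}. So I set
\[
U = U_f + a\,u,\qquad V = V_f + b\,v,\qquad t = c\,\tilde t,\qquad x = \sqrt{c}\,\tilde x,
\]
with constants $a,b\neq 0$ and $c>0$ to be chosen. Since $\partial_{\tilde t}=c\,\partial_t$ and $\partial_{\tilde x}^2 = c\,\partial_x^2$, the diffusion coefficients $\delta^2$ and $1$ are unchanged, and the system becomes \eqref{eq:genAI} with
\[
f(u,v)=\frac{c}{a}F(U_f+au,V_f+bv),\qquad g(u,v)=\frac{c}{b}G(U_f+au,V_f+bv).
\]
The change is smooth and independent of $\delta$, so it works for every $\delta$ and any $\delta_0$ is admissible. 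It clearly sends $(U_f,V_f)$ to $(0,0)$.

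Next I Taylor expand $F$ at $(U_f,V_f)$ to third order, which $C^4$ regularity permits with a controlled remainder. Using $F(U_f,V_f)=0$ and $F_U(U_f,V_f)=0$, I get
\[
f = \frac{c}{a}\Big(F_V\, b v + \tfrac12 F_{UU}a^2u^2 + F_{UV}ab\,uv + \tfrac16 F_{UUU}a^3u^3\Big) + \mathcal{O}(v^2,u^2v,u^4).
\]
Normalization requires $c\,b\,F_V/a=-1$ and $c\,a\,F_{UU}/2=-1$. These are solvable by conditions (ii) and (iii) of the non-degenerate fold definition. For instance, I fix $c=1$, take $a=-2/F_{UU}$, and then $b=-a/F_{V}$; the signs of $a$ and $b$ absorb the signs of the derivatives. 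This determines $\alpha_1=-F_{UV}b$ and $\alpha_2=-F_{UUU}a^2/6$.

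Expanding $G$ the same way to second order gives the stated form of $g$. The coefficients are $\tfrac12\lambda = (c/b)\,G(U_f,V_f)$, $\alpha_3=-(ca/b)G_U$, $\alpha_4=-cG_V$, and $\alpha_5=-(ca^2/2b)G_{UU}$. Because $c/b\neq 0$, we have $\lambda\neq0$ exactly when $G(U_f,V_f)\neq0$, and $\lambda=0$ exactly when it vanishes.

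Finally, I need the image $v_0(u)$ of the curve $V_0$. It is the null set $\{f=0\}$ near the origin. Since $\partial_v f(0,0)=-1\neq0$, the implicit function theorem gives a unique $C^4$ graph $v=v_0(u)$ with $v_0(0)=0$, and it coincides with the image of $V=V_0(U)$. To compute it, I substitute the ansatz $v_0=\beta_2u^2+\beta_3u^3+\mathcal{O}(u^4)$ into $f(u,v_0(u))=0$. The $u^2$ terms give $\beta_2=-1$. The $u^3$ terms give $-\beta_3-\alpha_1\beta_2-\alpha_2=0$, so $\beta_3=\alpha_1-\alpha_2$. This is \eqref{eq:v0}; there is no linear term because $v_0'(0)=-f_u/f_v=0$.

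I do not expect a serious obstacle. The one point needing care is to justify that the transformation must be restricted to this scaling class, so that the $\delta^2$ and unit diffusivities are preserved. I also need to check that the remainder orders $\mathcal{O}(v^2,u^2v,u^4)$ and $\mathcal{O}(uv,v^2,u^3)$ follow from the $C^4$ Taylor estimates.
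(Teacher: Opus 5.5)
Your proposal is correct and is essentially the paper's proof. With $c=1$, your choices $a=-2/F_{UU}$ and $b=-a/F_V$ are exactly the paper's scalings $u=-a_{20}\tilde U$ and $v=a_{01}a_{20}\tilde V$ applied after translating the fold point to the origin, and your coefficient formulas reproduce \eqref{eq:AppA-alpha_i}; your implicit-function-theorem derivation of $v_0(u)=-u^2+(\alpha_1-\alpha_2)u^3+\mathcal{O}(u^4)$ supplies a step that the paper's appendix leaves implicit.
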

\noindent 
Here, $\lambda, \alpha_i \in \mathbb{R}$ for $i=1,\ldots,5$,
and the formulas for $\alpha_i$ are given in \eqref{eq:AppA-alpha_i}.
Also, $\mathcal{O}(uv)$ denotes functions that vanish at least as fast as $uv$ does as $(u,v)\to (0,0)$, for example.
In a neighborhood of the locally quadratic function $v_0(u)$, the higher order terms in $f$ are effectively quartic in $u$, and those in $g$ are at least cubic in $u$.

The key characteristics of general systems \eqref{eq:genAI} are that the null set $\{ f(u,v)=0 \}$ is locally quadratic, as given by \eqref{eq:v0}, and that the null set has a non-degenerate fold at $(u_f,v_f)=(0,0)$.
These features are common to both the Brusselator model and van der Pol PDE studied in \cite{JDKV2026} and \cite{VDK2025}, respectively.
They are also shared by the Gierer-Meinhardt (both in its classical and generalized forms) \cite{EK2005,GM-orig,Me1982,M1989,Mu1982,W1997}, Gray-Scott \cite{DGK2002,GS1997,Pearson1993,SU2017}, Hodgkin-Huxley \cite{HH1952,M1989}, 
Klausmeier \cite{K1999,SDHR2013}, 
Lengyel-Epstein \cite{LE1992,LE1991,SU2017,W1997}, 
and Schnakenberg PDEs \cite{EK2005,M1989,S1979,SU2017}.
These PDEs all have non-degenerate fold points in the reaction terms for the activator variables.
The motivation for studying systems \eqref{eq:genAI-UV}, and hence \eqref{eq:genAI}, is that it represents a general class of RDEs that includes these and other reaction-diffusion models with non-degenerate fold points. 

The first main result of this article is the existence of time-independent, spatial canard solutions in the PDEs \eqref{eq:genAI}, equivalently \eqref{eq:genAI-UV} with non-degenerate fold points.
These solutions exhibit a ``fast-slow" (or singularly-perturbed) spatial structure.
They have intervals where they vary gradually in space, interspersed with short intervals of rapid spatial variation where they exhibit spikes and steep gradients.

The second main result is the existence of spatially-periodic canard solutions of \eqref{eq:genAI}. 
We show that linearly unstable canards are created in sub-critical Turing bifurcations and linearly stable canards are created in super-critical Turing bifurcations. 
In both cases, these canards are purely sinusoidal ($e^{ik_Tx}$) to leading order at the Turing bifurcations.
However, in general, these spatially-periodic solutions exhibit ``fast-slow" spatial structure, even as soon as one takes a value of the parameter that is slightly away from the Turing bifurcation, and their amplitude grows more rapidly than the classical square root growth as the parameter is taken further from the Turing point.
These results will demonstrate rigorously that spatially-periodic canard solutions are created not only in subcritical Turing bifurcations, as observed numerically for the Brusselator and van der Pol PDEs \cite{JDKV2026,VDK2025}, but also in the supercritical case in this general class \eqref{eq:genAI-UV}.
In the supercritical case, the analysis reveals that these time-independent attractors vary gradually over most of the period, and they
exhibit spikes in the activator component on short, complementary intervals.
We illustrate the supercritical Turing bifurcations to stable spatially-periodic canards in the classical Gierer-Meinhardt PDE \cite{EK2005,GM-orig,M1989,W1997}.

We have labeled the general spatial canard solutions and the spatially-periodic canard solutions of the PDEs \eqref{eq:genAI-UV} as {\it spatial canards}, because they are created by the canards of folded singularities in the spatial dynamics problem and because the spikes and layers in these solutions are nucleated by canards.
Here, we recall that folded singularities are points that lie on fold curves in the phase spaces of singularly-perturbed ODEs.
First studied and classified by Takens \cite{T1976}, they are points through which solutions can pass from one slow invariant manifold to another.
Folded singularities, see for example \cite{B1983,BKW2006,DR1996,KS2001,KW2010,MW2017,RRW2015,RKW2008,SW2001,VW2015}, have been studied primarily in the context of temporal fast-slow ODEs,
where the temporal canard solutions \cite{BCDD1981,D1984,DR1996,E1983,Moehlis,RKZE2003} 
pass from an attracting slow manifold to an unstable one, or vice versa.
They are classified as folded nodes, folded saddles, folded saddle-nodes, folded centers, among others, based on the eigenvalues of the linearization of the associated desingularized system.
Many of these have canard solutions that pass through them.
For instance, folded saddles have true and faux canards
that pass through them from one branch of the critical manifold to the other.
Also, folded nodes have primary and secondary strong canards, as well as weak canards.
Here, in the spatial ODE problems associated to the PDEs \eqref{eq:genAI-UV}, we find folded saddles (FS) and folded saddle-nodes of type II (FSN-II) \cite{KW2010,MW2017,SW2001}.
We will see that solutions pass through these points from saddle slow invariant manifolds to elliptic slow invariant manifolds, and vice versa, which is also a novel feature of the singularly-perturbed spatial dynamics problem here.

The proofs consist primarily of analyzing the ODEs that govern the time-independent solutions of \eqref{eq:genAI}.
These spatial ODEs are obtained by setting $\partial_t u=0$ and $\partial_t v=0$ in \eqref{eq:genAI},
\begin{equation}  \label{eq:x-ODE}
\begin{split}
    \delta u_x &= p \\
    \delta p_x &= v + u^2 + \alpha_1 uv + \alpha_2 u^3 + \mathcal{O}(v^2, u^2v, u^4),  \\
    v_x &= q \\
    q_x &= -\frac{\lambda}{2} + \alpha_3 u + \alpha_4 v + \alpha_5 u^2 + \mathcal{O}(uv,v^2,u^3).
\end{split}
\end{equation}
Here, $x$ is the independent variable, and the subscript on the dependent variables denotes the total derivative with respect to $x$.
This formulation will be useful for the analysis of the ``slow" spatial dynamics, 
{\it i.e.,} of the long intervals on which solutions vary gradually in space.

Let $y=x/\delta$.
For all $\delta>0$, an equivalent formulation of this ODE system is
\begin{equation}  \label{eq:y-ODE}
\begin{split}
    u_y &= p \\
    p_y &= v + u^2 + \alpha_1 uv + \alpha_2 u^3 + \mathcal{O}(v^2, u^2v, u^4),  \\
    v_y &= \delta q \\
    q_y &= \delta \left( -\frac{\lambda}{2} + \alpha_3 u + \alpha_4 v + \alpha_5 u^2 + \mathcal{O}(uv,v^2,u^3)\right).
\end{split}
\end{equation}
This latter formulation will be useful for the analysis of the ``fast" spatial dynamics, {\it i.e.,} of the short intervals on which solutions have large spatial gradients, and hence vary rapidly in space.
Note that the systems \eqref{eq:x-ODE} and \eqref{eq:y-ODE} have a reversibility symmetry 
\begin{equation}\label{eq:reversible}
{\mathcal R}: \, (x,u,p,v,q) \to (-x,u,-p,v,-q),
\end{equation}
which they inherit from the fact that the PDEs \eqref{eq:genAI-UV}
are invariant under the transformation $x \to -x$.

We make the following definitions: 

\begin{definition}
Let $\delta>0$ be sufficiently small.
A time-independent solution $(U(x),V(x))$ of \eqref{eq:genAI-UV} is called
a {\rm spatial canard solution} if $\left(U, \frac{dU}{dx}, V, \frac{dV}{dx} \right)$ is a canard solution
of a folded singularity on the fold curve in the spatial ODE system \eqref{eq:y-ODE}.
\end{definition}
\begin{definition}
A spatial canard solution is called a maximal spatial canard solution if it is a maximal canard solution 
of a folded singularity on the fold curve
in the spatial ODE system \eqref{eq:y-ODE}.
\end{definition}
The first main result is 
\begin{theorem}\label{thm:1}
    Let the PDE \eqref{eq:genAI-UV} have a non-degenerate fold point.
    There exists a $\delta_0>0$ small such that, for $0<\delta<\delta_0$ and for each $\lambda \ge 0$, the PDE \eqref{eq:genAI-UV} has spatial canard solutions and maximal spatial canards.
\end{theorem}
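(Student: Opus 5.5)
The plan is to work entirely with the spatial ODEs \eqref{eq:x-ODE}/\eqref{eq:y-ODE}, which by Proposition~\ref{prop1} are equivalent to the time-independent problem for \eqref{eq:genAI-UV} near the fold point. View \eqref{eq:y-ODE} as a fast–slow system with fast variables $(u,p)$ and slow variables $(v,q)$.

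\textbf{Step 1: critical manifold and fold curve.} The critical manifold is $S_0=\{p=0,\ v=v_0(u)\}$. The fast subsystem linearized along $S_0$ is $u_{yy}=(2u+\mathcal{O}(u^2))u$, with eigenvalues $\pm\sqrt{2u}(1+\mathcal{O}(u))$. Hence $S_0$ has three parts: a saddle-type (normally hyperbolic) branch $S_0^s$ for $u>0$, an elliptic branch $S_0^e$ for $u<0$, and the fold curve $L=\{u=p=v=0,\ q\in\mathbb{R}\}$.

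\textbf{Step 2: reduced flow and folded singularities.} On $S_0$ we have $v_x=v_0'(u)u_x=-2u(1+\mathcal{O}(u))u_x=q$ and $q_x=h(u):=-\lambda/2+\alpha_3u+\mathcal{O}(u^2)$. Rescale $dx=-2u(1+\mathcal{O}(u))\,d\tau$ to obtain the desingularized system
\begin{equation*}
u'=q,\qquad q'=-2u\,h(u)\,(1+\mathcal{O}(u)) = \lambda u+\mathcal{O}(u^2).
\end{equation*}
Its only equilibrium on $L$ is $(u,q)=(0,0)$, where the Jacobian is $\begin{pmatrix}0&1\\ \lambda&0\end{pmatrix}$ with eigenvalues $\pm\sqrt\lambda$.
\begin{itemize}
\item For $\lambda>0$ the origin is a folded saddle. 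Its stable and unstable eigendirections $q=\mp\sqrt\lambda\,u$ give the singular true and faux canards, which cross $L$ from $S_0^s$ to $S_0^e$.
\item For $\lambda=0$ the equilibrium $h=0$ of the reduced flow sits on $L$, so the origin is a folded saddle-node of type II.
\end{itemize}
The reversor $\mathcal R$ maps $q\mapsto -q$ and reverses $x$. It therefore exchanges the two eigendirections, and the folded singularity lies in $\mathrm{Fix}(\mathcal R)=\{p=q=0\}$.

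\textbf{Step 3: persistence of canards for $\delta>0$.} Away from $L$, Fenichel theory gives a locally invariant two-dimensional saddle slow manifold $S^s_\delta$ that is $\mathcal{O}(\delta)$-close to compact pieces of $S_0^s$. The elliptic branch is not normally hyperbolic, so I will not try to build a slow manifold there. Instead I will use reversibility. An orbit that starts on $S^s_\delta$, follows the singular true canard toward $L$, and hits $\mathrm{Fix}(\mathcal R)$ is, by the reversibility \eqref{eq:reversible}, a symmetric orbit. Its $\mathcal R$-image is again on $S^s_\delta$, so the orbit returns to $S^s_\delta$ after its excursion through the fold region. Such an orbit is a maximal canard in the sense of the definitions above. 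Once we have it, nearby orbits that follow $S^s_\delta$ to the fold and then pass along the elliptic side give (non-maximal) spatial canards.

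To find the intersection, I will extend $S^s_\delta$ through a neighbourhood of the fold by the blow-up method of Krupa–Szmolyan/Szmolyan–Wechselberger. Blow up $(u,p,v,q,\delta)=(r\bar u, r^{3/2}\bar p, r^2\bar v, r\bar q, r^{\cdot}\bar\delta)$, with exponents chosen so that the rescaling chart $\bar\delta=1$ gives a $\delta$-independent reversible system $\bar u''=\bar v+\bar u^2$, $\bar v'=\bar q$, $\bar q'=-\bar\lambda/2+\dots$. In that chart the dimension count works: $S^s_\delta$ is two-dimensional, $\mathrm{Fix}(\mathcal R)$ is two-dimensional, and the ambient space is four-dimensional. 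So it suffices to exhibit one intersection point at $r=0$ and show it is transverse; the implicit function theorem then gives persistence for $0<\delta<\delta_0$. For $\lambda>0$ I will follow the singular true canard through the entry chart and compute the transversality as a Melnikov-type integral along the explicit (Painlevé/Weierstrass-type) special solution of the rescaled system. Transversality there is expected to follow from $\lambda\neq0$.

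\textbf{Step 4: the case $\lambda=0$.} For $\lambda=0$ (the FSN-II, which is the singular Turing point) I will include $\lambda$ as a blow-up variable, scaling $\lambda=r\bar\lambda$. This covers a full neighbourhood $\lambda\in[0,\lambda_0)$ uniformly, following the type-II folded saddle-node analysis of Krupa–Wechselberger adapted to the reversible setting. At $\bar\lambda=0$ the symmetric orbit comes from the equilibrium of the reduced flow sitting on $L$: it is a small-amplitude reversible periodic or homoclinic-type orbit produced by the reversible 1:1 resonance near the origin.

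I expect the main obstacle to be Step 3: obtaining transversality of $S^s_\delta$ with $\mathrm{Fix}(\mathcal R)$ in the rescaling chart, and controlling the passage near the non-hyperbolic elliptic side. Standard folded-saddle theory assumes attracting/repelling branches, not saddle/elliptic ones, so it cannot be quoted directly. Reversibility is the tool that replaces it, and the first thing I would try is an explicit Melnikov computation in the $\bar\delta=1$ chart.
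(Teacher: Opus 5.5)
Your Steps 1--2 match Section~\ref{sec:layerproblem+criticalmanifold+desingularizedsystem} of the paper. The mechanism of Step~3, however, fails at the folded saddle. You want the maximal canard as a transverse intersection of the forward continuation of $S^s_\delta$ with $\mathrm{Fix}(\mathcal R)=\{p=q=0\}$, seeded by an intersection at $r=0$ along the singular true canard in the rescaling chart. In that chart the true canard is the explicit linear solution $\Gamma_t^0(y_2)=(-\tfrac12\sqrt\lambda\,y_2,\,-\tfrac12\sqrt\lambda,\,-\tfrac14\lambda y_2^2,\,-\tfrac12\lambda y_2)$ of \eqref{eq:K2-on-r2=0}; no Painlev\'e or Weierstrass function enters. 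Along it $p_2\equiv-\tfrac12\sqrt\lambda\neq 0$, so it never meets $\mathrm{Fix}(\mathcal R)$, and in fact $\mathcal R(\Gamma_t^0)=\Gamma_f^0$. The point $M\in\mathrm{Fix}(\mathcal R)$ where the canards appear to meet before blow-up is exactly the crossing that the blow-up separates into two distinct, mutually reflected orbits. So there is no intersection point for the implicit function theorem to continue.

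On the singular maximal canard (true canard, then the homoclinic loop of Proposition~\ref{prop:singularcanardsconnect} on $S_c^0$, then faux canard), the only symmetric point that survives the blow-up is where the loop crosses $\{q=0\}$. That point lies on the elliptic sheet, a $\delta$-independent distance from $L$. This is outside the blow-up region and precisely where you said you would build no slow manifold, so your Melnikov computation is aimed at the wrong place. Your dimension count is also misleading. $S^s_\delta$ is a graph over $(v,q)$ and can be chosen $\mathcal R$-invariant, so it meets $\mathrm{Fix}(\mathcal R)$ non-transversely along a whole curve, approximately $\{q=0\}$ on $S^s_\delta$. That curve consists of symmetric slow orbits that turn around on the saddle sheet without reaching $L$. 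These are not canards, so ``an orbit of $S^s_\delta$ that hits $\mathrm{Fix}(\mathcal R)$'' does not select canards.

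The paper never needs a symmetric point on the true canard:
\begin{enumerate}[(i)]
\item It shows that $\Gamma_t^0$ and $\Gamma_f^0$ limit in chart $K_1$ on $\mathbf p_\pm\in\ell_1$, tangent to the generalized center eigenvector, and hence lie in $W^c(\ell_1)$.
\item It treats $\Gamma^0=\Gamma_t^0\cup\Gamma_f^0$ as one orbit from $\mathbf p_-$ to $\mathbf p_+$. It argues that $W^c(\mathbf p_-)$ and $W^c(\mathbf p_+)$ intersect transversely along it, because $d\Gamma^0/d\zeta$ is the only solution of the variational equation bounded at both ends.
\item It continues this intersection by regular perturbation in $r_2$.
\item Only at the end does it invoke reversibility, together with the transverse crossing of the singular canards at $M$ computed from $H_d$, to obtain $\Gamma_t^\delta=\mathcal R(\Gamma_f^\delta)$.
\end{enumerate}

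Your Step~4 also needs repair. If you keep $q\sim r$ and $\delta\sim r^{3/2}$ and set $\lambda=r\bar\lambda$, the whole right-hand side of the $\bar q$-equation is $\mathcal O(r)$, including the essential $\alpha_3\bar u$ and $\bar\lambda$ terms. At $r=0$ you then see only $\bar q'=0$, and the folded saddle-node structure (the cusp) is invisible. You need $q\sim u^{3/2}$, $\delta\sim u$ and $\lambda\sim u^{3/2}$, i.e.\ the weights \eqref{eq:blowup-RFSNII}. The object to continue is then the explicit polynomial solution $\Gamma_{20}$ in \eqref{eq:Gamma_20}. It is not a small-amplitude orbit of the 1:1 resonance: that resonance sits at $\lambda_T=\mathcal O(\delta)$ around the ordinary equilibrium and belongs to Theorem~\ref{thm:2}. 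Note that $\Gamma_{20}$ is symmetric, with $\Gamma_{20}(0)=(0,0,-\alpha_3/6,0)\in\mathrm{Fix}(\mathcal R)$. So your reversibility idea is natural at the folded saddle-node, just not at the folded saddle.
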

We will show that the spatial canards and the maximal spatial canards of the PDE \eqref{eq:genAI-UV} are exactly the canards and maximal canards of the FS and FSN-II points in the spatial ODEs \eqref{eq:y-ODE}.
This will follow from the fact that the latter are time-independent.
Hence, the existence result will follow from the analysis of the folded singularities and their canards in the phase space of \eqref{eq:y-ODE}.

\medskip

Next, we present results for the main types of spatially-periodic canards that arise in pattern formation through singular Turing bifurcations in \eqref{eq:genAI-UV}, equivalently \eqref{eq:genAI}.
We show that the Turing bifurcation points in \eqref{eq:genAI} occur for parameter values that are asymptotically close to the parameter value $\lambda=0$ at which the spatial ODE system \eqref{eq:y-ODE} has a reversible FSN-II point (RFSN-II).
This was one of the central observations made about the Brusselator and van der Pol PDEs, recall \cite{JDKV2026,VDK2025}.
Here, we find that this property holds quite generally.
We make the following definition:
\begin{definition}
A spatial canard solution $(U(x),V(x))$ of \eqref{eq:genAI-UV} is called
a {\rm spatially-periodic canard solution} if there exists an $X>0$ 
such that $(U(x+X),V(x+X))=(U(x),V(x))$ for all $x$.
\end{definition}

The second main result, 
which we prove by constructing spatially-periodic canards in \eqref{eq:y-ODE}, is
\begin{theorem}\label{thm:2}
Let the PDE \eqref{eq:genAI-UV} have a non-degenerate fold point $(U_f,V_f)$ 
and a parametrized family of equilibria $(U_e,V_e)$.
Let the parameter-dependent family of equilibria undergo either a subcritical or a supercritical Turing bifurcation as the equilibrium point passes through the fold point with non-zero speed as the parameter is varied. 
There exists a $\delta_0>0$ sufficiently small such that, for $0<\delta<\delta_0$ and for each $\lambda\ge 0$, the PDE \eqref{eq:genAI-UV} has spatially-periodic canard solutions.
Furthermore, these are members of the families of periodic solutions created in the Turing bifurcation.     
\end{theorem}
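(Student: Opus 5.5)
We construct the spatially-periodic canards as reversible periodic orbits of the spatial ODE \eqref{eq:y-ODE}, using the reversibility $\mathcal R$ of \eqref{eq:reversible}. A solution that crosses the fixed-point set $\mathrm{Fix}(\mathcal R)=\{p=q=0\}$ at two distinct values of $x$ is periodic, with period twice the distance between the crossings. Such a solution then gives an $X$-periodic time-independent solution of \eqref{eq:genAI}, and hence of \eqref{eq:genAI-UV} via the inverse of the coordinate change $\psi$ in Proposition~\ref{prop1}.

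\textbf{Step 1: locate the Turing point.} Using the hypothesis that the equilibrium passes through the fold point with nonzero speed, we reparametrize so that the equilibrium of \eqref{eq:y-ODE} is $(u_e,0,v_e,0)$ with $v_e=v_0(u_e)+\mathcal O(\delta)$ and $u_e\sim c\lambda$ for some $c\neq 0$. We then linearize \eqref{eq:x-ODE} at this equilibrium. The characteristic polynomial is
\[
(\delta^2\mu^2-2u_e-\dots)(\mu^2-\alpha_4)-(\dots)=0 .
\]
We expect it to have a double pair of purely imaginary roots, i.e.\ a 1:1 resonant Hopf point, at $\lambda_T=\mathcal O(\delta)$, with wavenumber $k_T\sim\delta^{-1/2}$. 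This places the Turing point asymptotically close to the RFSN-II at $\lambda=0$. Invoking the reversible 1:1 resonance normal form theory \cite{HI2011,IP1993} then gives a one-parameter family of small reversible periodic orbits emanating from the equilibrium for $\lambda$ on one side of $\lambda_T$. Which side, and the stability, is determined by the sign of the first Lyapunov/Landau coefficient, which separates the sub- and super-critical cases.

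\textbf{Step 2: blow-up near the fold and continuation.} We introduce the rescaling $u=\delta^{1/2}\bar u$, $v=\delta\bar v$, $p=\delta^{3/4}\bar p$, $q=\delta^{3/4}\bar q$, $\lambda=\delta^{1/2}\bar\lambda$ (exponents to be fixed by balancing). At leading order this yields a reversible integrable Hamiltonian-type system, of Painlevé/canard type, with a reversible FSN-II singularity at the origin. For $\bar\lambda>0$ the blown-up system has a folded saddle, whose true and faux canards connect the saddle branch $u<0$ of the critical manifold $v=-u^2$ with the elliptic branch $u>0$. In the rescaled variables the family of Step 1 is a family of periodic orbits of the leading-order system, namely level sets of a conserved quantity, which cross $\mathrm{Fix}(\mathcal R)$ twice. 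Transversality of these crossings lets us continue each one to $\delta>0$ by the implicit function theorem applied to the reversible shooting condition $p=q=0$ at the two crossings. We continue the family in energy up to the orbits passing close to the canard. There, Fenichel theory on the slow manifolds away from the fold, combined with the folded-saddle canard results already used for Theorem~\ref{thm:1}, shows that these orbits follow the canards. They are therefore spatial canard solutions, and by construction they lie in the Turing family. Since $\bar\lambda\ge 0$ covers $\lambda\ge0$ after the scaling, and Fenichel theory handles $\lambda$ of order one, we obtain the statement for all $\lambda\ge 0$.

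\textbf{Main obstacle.} The hardest step is matching the local Turing family of Step 1 with the canard-passing orbits of Step 2 uniformly in $\delta$. The center-manifold family lives in a neighborhood whose size shrinks with $\delta$. The plan is to perform the entire construction inside the blow-up chart, where both the Hopf point and the canards are $\mathcal O(1)$ objects. We then show that the leading-order reversible periodic annulus is free of degenerate crossings between the Hopf equilibrium and the canard homoclinic/heteroclinic boundary, by direct computation with its first integral. This is what permits continuation of the whole annulus for small $\delta$.
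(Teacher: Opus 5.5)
Step 1 is fine; it is the paper's Section~2. The decisive gap is in Step 2: nothing in your argument produces a periodic orbit that actually passes through the folded singularity. For $\lambda>0$ the slow flow on the saddle sheet satisfies $q_x=-\tfrac12\lambda+\alpha_3u+\dots<0$. (That sheet is $u>0$, not $u<0$, since $\det J|_{\mathcal C}=-2u+\dots$.) So an orbit that comes in along $\Gamma_t^0$ (where $q>0$), passes $M_{\rm RFS}$ and leaves along $\Gamma_f^0$ (where $q<0$) can never get back to $\Gamma_t^0$ by slow motion. It must leave $S_s^\delta$ along a fast unstable fiber and return along a stable one. ``Fenichel theory plus the canard results of Theorem~\ref{thm:1}'' controls only the slow segments and says nothing about this return.

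The paper's proof is built precisely around the return:
\begin{enumerate}[(i)]
\item in the distinguished scaling \eqref{eq:distinguishedlimit}, the layer problem has the $\mathrm{sech}^2$ homoclinics \eqref{eq:UH};
\item the Melnikov splitting \eqref{eq:Delta_HF}, with $D_1=3\sqrt2\pi W_0^{1/4}Q_0$, gives a transverse intersection of $W^u(S_s^\delta)$ and $W^s(S_s^\delta)$;
\item the $Q$-jump accumulated along the pulse yields the takeoff and touchdown curves \eqref{eq:TOTD}, which cut $\Gamma_f^0$ and $\Gamma_t^0$ transversely at $W_{\rm TOTD}$ \eqref{eq:WTOTD};
\item the resulting singular orbit (canard segments through the RFS plus one fast pulse) is continued to $\delta>0$ with the results of \cite{dRDR2016,ST2001}, modified by a blow-up at the fold.
\end{enumerate}
Your proposal contains none of these ingredients or substitutes for them.

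The continuation device you offer instead does not work as stated.
\begin{itemize}
\item With your exponents the rescaled system is still slow--fast with parameter $\delta^{1/2}$. Its leading order is just the layer problem, and the Turing point sits at $\bar\lambda=\mathcal O(\delta^{1/2})$.
\item The balance that makes both the Turing point and the canards $\mathcal O(1)$ is $u\sim\delta$, $p,q\sim\delta^{3/2}$, $v\sim\delta^2$, $\lambda\sim\delta$. This is the chart $\hat K_2$ of Section~\ref{sec:RFSNII-desing} with $\lambda$ scaled by $\hat r_2^2$. There the leading-order problem \eqref{eq:K2-on-r2=0-RFSNII} (plus a $-\tfrac12\hat\lambda_2$ term) is a two-degree-of-freedom Hamiltonian system with no second integral available. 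Its periodic orbits cannot be described as level sets of a conserved quantity, so there is no ``periodic annulus'' to certify ``with its first integral''.
\item That chart only sees $\lambda=\mathcal O(\delta)$, and $\hat\lambda_2\to\infty$ is a singular limit, so it does not reach $\mathcal O(1)$ values of $\lambda$.
\item For such $\lambda$, the equilibrium $E$ and the small Turing orbits lie near the elliptic sheet $S_c^0$ (since $u_e\approx\lambda/(2\alpha_3)<0$), which is not normally hyperbolic. Fenichel theory is unavailable there. A continuation across that sheet also meets fast--slow resonances (frequency ratio $\sim\delta^{-1}$), at which the nondegeneracy needed for your implicit-function argument fails.
\end{itemize}
Your aim of obtaining membership in the Turing family by continuation is more than the paper attempts: it constructs the canard orbits directly for each fixed $\mathcal O(1)$ value $\lambda>0$ and does not track the family from $\lambda_T$. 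As written, though, your proposal establishes neither the existence of the periodic canards nor that continuation.
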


Geometrically, the spatial canards and spatially-periodic canards exist in the PDEs \eqref{eq:genAI-UV} because the null set $\{ F=0 \}$ of the activator is a non-degenerate quadratic in the neighborhood of the non-degenerate fold point $(U_f,V_f)$.
Hence, the associated system of spatial ODEs \eqref{eq:y-ODE} has a critical manifold with saddle and center sheets separated by a fold curve, and we will see that this geometric structure naturally gives rise to the folded singularities and their attendant canards in the spatial dynamics problem \eqref{eq:y-ODE}.
Moreover, this type of null set is ubiquitous for models of chemical oscillators, where the temporal kinetics ODEs naturally have stable (attracting) and unstable (repelling) branches that meet at fold points.
Therefore, the spatial canards whose existence we establish here are ubiquitous in the PDE models that govern these types of chemical oscillators when the species also diffuse.

We also observe that the spatially-periodic canards in PDEs \eqref{eq:genAI-UV} are the analogs in spatial dynamics of the temporally periodic limit cycle canards first discovered in the fast-slow (or ``relaxation" limit of the) van der Pol ODE \cite{BCDD1981,D1984,DR1996,E1983}, as well as of the canards of folded saddles and folded saddle-nodes in other fast-slow (temporal) systems, see {\it e.g.} \cite{KS2001,KW2010,MKKR1984,MW2017,Moehlis,RKZE2003}. 
First, the singular Turing bifurcation points in the PDEs \eqref{eq:genAI} correspond to 1:1 resonant Hopf bifurcations in the spatial ODE system \eqref{eq:y-ODE} (see Section~\ref{sec:Turing+NF}).
These are the spatial analogs of the singular Hopf bifurcations in temporal fast-slow ODEs.
Second, the families of small-amplitude, spatially-periodic solutions created in the singular Turing bifurcations here are the analogs in spatial dynamics of the small-ampltiude, temporally oscillating solutions in fast-slow ODEs that exist just beyond singular Hopf bifurcations. 
Third, the critical value at which the maximal spatial canards exist is the analog in spatial dynamics of the critical canard value in fast-slow temporal ODEs. 
Fourth, the spatial canards generated by reversible folded saddles for each $\lambda>0$ and by the reversible folded saddle-nodes of type II at $\lambda=0$ are the analogs in spatial dynamics of the canards created by folded singularities in the temporal fast-slow ODEs.
Fifth, the segments of slow drift (in $x$) of the spatial canards near the true and faux canards $\Gamma_t^\delta$ and $\Gamma_f^\delta$ of the folded saddle and folded saddle-node of type II points are the analogs of the segments of slow drift (in time) along the attracting and repelling branches of the one-dimensional critical manifolds for systems with temporal canards.

There are important structural differences between the spatial canards here and temporal canards in fast-slow ODEs, due to the geometry of the four-dimensional phase space of the spatial ODEs \eqref{eq:y-ODE}.
The equilibria of the fast system can only be saddles, centers, and saddle-nodes, not attractors or repellors, since the fast subsystem of \eqref{eq:y-ODE} is Hamiltonian.
Hence, the spatial canards of \eqref{eq:y-ODE} consist of segments of slow (spatial) drift near two-dimensional saddle slow manifolds in alternation with segments of (spatial) drift near two-dimensional elliptic manifolds during which the solutions oscillate about center points of the fast system.
Another difference is that, for spatial canards, there is a unique parameter value at which the single-branched stable and unstable manifolds of the cusp point, which are the true and faux canards of the RFSN-II point, continue into each other to all orders in the small parameter $\delta$.
By contrast, for temporal canards, the critical value corresponds to the unique value at which the branches of one-dimensional attracting and repelling slow manifolds coincide to all orders.

This article is organized as follows.
In Section~\ref{sec:Turing+NF}, we apply the classical Turing bifurcation analysis
and the normal form theory for 1:1 resonant Hopf bifurcations in a straightforward manner to determine the conditions for which the Turing bifurcations are sub- and super-critical.
Next, in Section~\ref{sec:layerproblem+criticalmanifold+desingularizedsystem}, we analyze the key structures of the spatial ODE system \eqref{eq:y-ODE}, including the layer problem, the critical and slow manifolds, and the desingularized reduced system. 
Also, we identify the two key folded singularities: the reversible folded saddle-node of type II that exists for $\lambda=0$ and the reversible folded saddle point that exists for each $\lambda>0$.
The geometric desingularization analysis of these reversible folded singularities is presented in Sections~\ref{sec:geodesing} and \ref{sec:RFSNII-desing}.
Then, Sections~\ref{sec:proof-theorem1} and \ref{sec:proof-theorem2} contain the proofs of Theorems~\ref{thm:1} and \ref{thm:2}.
The classical Gierer-Meinhardt PDE is analyzed in Section~\ref{sec:GM-analysis}, as a prototype of the general class of PDEs \eqref{eq:genAI-UV}, and results obtained from numerical simulations of the classical Gierer-Meinhardt PDE are included to illustrate
the stable spatially-periodic canards whose existence is established by Theorem~\ref{thm:2}.
Appendices~\ref{app:prop1}, \ref{app:Turing}, \ref{app:proptwist}, and \ref{app:secondgeodesing} present the proofs of Propositions~\ref{prop1}, \ref{prop:haragusiooss}, \ref{prop:twist}, and \ref{prop:roleofGamma20}, respectively.
Appendix~\ref{app:GM-Turing} contains a brief derivation of the formula for the Turing bifurcation in the Gierer-Meinhardt PDE.
Appendix~\ref{app:examples} briefly presents two additional prototypical PDEs of the form \eqref{eq:genAI-UV} with non-degenerate fold points, which further illustrate the results of Theorem~\ref{thm:2}.

\section{The Turing Bifurcation and the Normal Form of the Spatial ODE for Reversible 1:1 Resonant Hopf Bifurcations} \label{sec:Turing+NF}

In this section, we embed the general system \eqref{eq:genAI} into the following one-parameter family of PDEs: 
\begin{equation}  \label{eq:genAI-mu}
\begin{split}
    \partial_t u &= \delta^2 \partial_x^2 u + f(u,v;\mu), \\
    \partial_t v &= \partial_x^2 v + g(u,v;\mu),
    \end{split}
\end{equation} 
where $\mu$ is a real-valued parameter.
We study parameter values at which a spatially homogeneous state undergoes a Turing bifurcation.
Our approach will rely on the fact that Turing points correspond to 1:1 resonant Hopf bifurcation points in the associated system of spatial ODEs that governs the time-independent solutions, see for example \cite{E1965,HI2011,IMD1989,IP1993}.

In particular, for \eqref{eq:genAI-mu}, the spatial ODE system is
\begin{equation}  \label{eq:y-ODE-mu}
\begin{split}
    u_y &= p \\
    p_y &= - f(u,v;\mu) \\
    v_y &= \delta q \\
    q_y &= - \delta g(u,v;\mu),
\end{split}
\end{equation}
where again $y=x/\delta$.
Following \cite{HI2011} in a straightforward manner, we will apply the normal form bifurcation analysis of 1:1 resonant Hopf points of \eqref{eq:y-ODE-mu} to identify the Turing points and to determine how their criticality depends on parameter values.

\subsection{Conditions for the Turing bifurcation in \texorpdfstring{\eqref{eq:genAI-mu}}{Lg}}
\label{sec:Turing}

We make the following assumption about the general system \eqref{eq:genAI-mu}:
\begin{hypothesis}\label{familyofequilib}
There exists an interval $I$ such that for each $\mu\in I$ the spatial  
ODE system \eqref{eq:y-ODE-mu} of the PDE \eqref{eq:genAI} has an equilibrium at 
\begin{equation}\label{eq:equilib}
(u,p,v,q)=(u_e(\mu),0,v_e(\mu),0).
\end{equation}
\end{hypothesis}
At an equilibrium, the Jacobian matrix of \eqref{eq:y-ODE-mu} is
\begin{equation*}
    J = \left[
     \begin{array}{cccc}
     0 & 1 & 0 & 0 \\
     -f_u & 0 & -f_v & 0 \\
     0 & 0 & 0 & \delta \\
     -\delta g_u & 0 & -\delta g_v & 0 
     \end{array}
    \right].
\end{equation*}
The characteristic equation is the following quadratic function of $\sigma^2$:
\begin{equation}\label{eq:detJ-sigmaI}
    {\rm det} ( J - \sigma \mathbb{I})
    = \sigma^4 + (f_u + \delta^2 g_v) \sigma^2 + \delta^2 ( f_u g_v - f_v g_u) = 0.
\end{equation}
Hence, the eigenvalues of $J$ are
\begin{equation}\label{eq:sigma}
    \sigma= \pm \frac{1}{\sqrt{2}} 
    \sqrt{ - (f_u + \delta^2 g_v) \pm \sqrt{ (f_u+\delta^2 g_v)^2 - 4 \delta^2(f_u g_v - f_v g_u)} },
\end{equation}
where the derivatives are evaluated at the equilibria.

The equilibrium is a 1:1 resonant Hopf bifurcation point at the parameter value where the eigenvalues form two coincident pure imaginary pairs.
To find these for \eqref{eq:y-ODE-mu}, we set 
$\det (J-\sigma \mathbb{I}) = 0$ and $\frac{d (\det (J-\sigma \mathbb{I}))}{d\sigma^2} = 0$,
\begin{equation}\label{eq:discriminant} 
    \begin{split}
        &(f_u+\delta^2 g_v)^2 - 4 \delta^2(f_u g_v - f_v g_u) = 0, \\
        &\sigma_T^2 = - \frac{1}{2}(f_u + \delta^2 g_v), \, \, \, {\rm with} \, \, \, f_u + \delta^2 g_v > 0.
    \end{split}
\end{equation}
Solving the first condition, we find
\begin{equation*}
    f_u = \pm 2 \delta \sqrt{-f_v g_u} + \delta^2 g_v.
\end{equation*}
We focus on the first case (positive root); results for the second case (negative root) follow in a similar manner.
Hence, we make the following additional assumption,
\begin{hypothesis}\label{hypo2}
    For all $\mu\in I$, let $f_v(u_e(\mu),0,v_e(\mu),0) g_u(u_e(\mu),0,v_e(\mu),0) < 0$.
    There exists a locally unique value of $\mu$, which we label $\mu_T$, in $I$ such that
\begin{equation}\label{eq:Turing-condition}
    f_u = 2 \delta \sqrt{-f_v g_u} + \delta^2 g_v \qquad {\rm and} \qquad 
    \sqrt{-f_v g_u} + \delta g_v > 0,
\end{equation}
where the derivatives are evaluated at the equilibria.
\end{hypothesis}

\begin{figure}[ht!]
  \centering
  \includegraphics[width=5in]{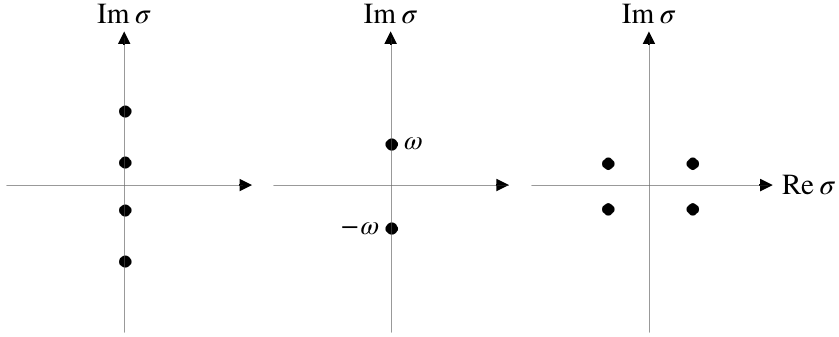}
  \put(-360,135){(a)}
  \put(-244,135){(b)}
  \put(-128,135){(c)}
  \caption{The quartet of eigenvalues of the spatial ODEs \eqref{eq:y-ODE-mu} as a parameter sweeps the system through the 1:1 resonant Hopf bifurcation at $\mu_T$. 
  (a) On one side of bifurcation, there are two distinct pairs of purely imaginary eigenvalues $(\pm i\omega, \pm i \omega)$. 
  (b) At the bifurcation, there is a pure imaginary pair of eigenvalues with algebraic multiplicity two. 
  (c) On the other side, the eigenvalues consist of a complex pair with negative real part and a complex pair with positive real part.}
  \label{fig:eigenvalues}
\end{figure}

With the above hypotheses, the system \eqref{eq:y-ODE-mu} of spatial ODEs has a 1:1 resonant Hopf bifurcation at $\mu = \mu_T$, with the Jacobian having eigenvalues $(\pm i \omega, \pm i \omega)$, where 
\begin{equation}\label{eq:omega-NF}
    \pm i \omega = \pm i \sqrt{\delta} \omega_0 \quad {\rm and} \quad \omega_0 = \sqrt{\sqrt{-f_vg_u} + \delta g_v}.
\end{equation}
Hence, the homogeneous state of the system of PDEs \eqref{eq:genAI-mu} undergoes a Turing bifurcation there.
A sketch of the eigenvalues $\sigma$ under variation of the parameter $\mu$ is shown in Figure~\ref{fig:eigenvalues}.
On one side of $\mu_T$, the Jacobian has two pairs of pure imaginary eigenvalues so that the fixed point is a center-center point (Figure~\ref{fig:eigenvalues}(a)), and on the other it has a quartet of complex-valued eigenvalues with non-zero real parts (Figure~\ref{fig:eigenvalues}(c)) so that the state is normally hyperbolic.
The Turing bifurcation at $\mu_T$ occurs at the edge, where normal hyperbolicity of the state is lost.

Applying the theory to PDEs \eqref{eq:genAI} with $\alpha_3 < 0$, we find that the Turing bifurcation occurs at 
\begin{equation}\label{eq:lambdaT}
\lambda_T = 2 (-\alpha_3)^{3/2} \delta
\end{equation}
to leading order in $\delta$.
This follows from the first condition in \eqref{eq:Turing-condition} in the limit that $u_e$ and $\lambda$ are small, where $u_e=\frac{\lambda}{2\alpha_3}$, to leading order, where the derivatives in the definition 
\eqref{eq:AppA-alpha_i} of $\alpha_3$ are evaluated at the Turing point.
(Note: for systems in which $\alpha_3=0$, $\lambda_T$ is of higher order.)

\subsection{Normal form theory for 1:1 resonant Hopf bifurcations}\label{sec:oneoneNF}

System \eqref{eq:y-ODE-mu} has a reversibility symmetry that is induced by the $x \to -x$ symmetry of \eqref{eq:genAI}. 
Let
\begin{align}
    \mathcal{R} = 
    \begin{bmatrix}
        1 & 0 & 0 & 0 \\
        0 & -1 & 0 & 0 \\ 
        0 & 0 & 1 & 0 \\
        0 & 0 & 0 & -1
    \end{bmatrix}, \ \ \ \
    {\bf u} =
    \begin{bmatrix}
        u \\ p \\ v \\ q
    \end{bmatrix}, \ \ \ 
    {\bf F} = {\bf F} (u,p,v,q;\mu) = 
    \begin{bmatrix}
          p \\
          -f(u,v;\mu)\\
          \delta q\\
          - \delta g(u,v;\mu) 
    \end{bmatrix}.
\end{align}
Here, $\mathcal{R}$ is a reversibility symmetry because it anti-commutes with the vector field,
\begin{equation}\label{reversibility}
    \mathcal{R} {\bf F} ({\bf u}) = - {\bf F}( \mathcal{R}{\bf u}).
\end{equation}
Also, let $\mathcal{L}$ denote the operator obtained by linearizing the vector field ${\bf F}$ about the equilibria \eqref{eq:equilib}.
As a consequence of the reversibility symmetry, the spectrum of $\mathcal{L}$ is symmetric with respect to both the real-axis and the imaginary-axis in the $\sigma$ plane, since ${\bf u}$ is real-valued and since we have $\mathcal{R}\mathcal{L} = - \mathcal{L} \mathcal{R}$ and $(\sigma I + \mathcal{L} )^{-1} \mathcal{R} = \mathcal{R} (\sigma I - \mathcal{L} )^{-1}$.

The normal form theory for reversible, 1:1 resonant (spatial) Hopf bifurcations guarantees the existence of families of (spatially) periodic solutions, (spatially) homoclinic solutions, and other solutions of the spatial ODE system. 
Application of Theorem 3.21 in Chapter 4.3.3 of \cite{HI2011} yields

\medskip
\begin{proposition} \label{prop:haragusiooss}
For the spatial ODE system \eqref{eq:y-ODE-mu} with $\delta>0$ and $\tfrac{\mu-{\mu}_T}{\mu_T}$ sufficiently small, the following statements hold:
\begin{enumerate}[(i)]
\item For all $\frac{\mu-\mu_T}{\mu_T}\ne 0$ and small and for all $b<0$, there is a symmetric equilibrium, a one-parameter family of periodic orbits, and a two-parameter family of quasi-periodic orbits located on KAM tori.
\item For $\frac{\mu-\mu_T}{\mu_T}>0$ small and $b>0$, there is a symmetric equilibrium, a one-parameter family of periodic orbits, and a two-parameter family of quasi-periodic orbits located on KAM tori. 
There is also a one-parameter family of reversible homoclinic orbits to the periodic orbits.
\item For $\frac{\mu-\mu_T}{\mu_T}<0$ small and $b<0$, 
there is a pair of reversible homoclinic orbits to the symmetric equilibrium.
\item For $\frac{\mu-\mu_T}{\mu_T}<0$ small and $b>0$, a symmetric equilibrium exists, but no other bounded solutions.
\end{enumerate}
\end{proposition}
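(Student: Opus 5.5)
The plan is to reduce \eqref{eq:y-ODE-mu} to the normal form of a reversible 1:1 resonant Hopf bifurcation and then read off cases (i)--(iv) from Theorem 3.21 in Chapter 4.3.3 of \cite{HI2011}, with $b$ the cubic coefficient of that normal form. Three structural facts are needed to invoke that theorem:
\begin{itemize}
\item[(a)] the vector field ${\bf F}$ is smooth (at least $C^4$, by Proposition~\ref{prop1}) in $({\bf u},\mu)$;
\item[(b)] it is reversible with respect to $\mathcal{R}$, as in \eqref{reversibility}, and the equilibrium \eqref{eq:equilib} is symmetric, since $p=q=0$ there;
\item[(c)] at $\mu=\mu_T$ the linearization $\mathcal{L}$ has eigenvalues $\pm i\omega$, each algebraically double and geometrically simple.
\end{itemize}
Fact (c) follows from Hypotheses~\ref{familyofequilib} and \ref{hypo2} together with \eqref{eq:discriminant} and \eqref{eq:omega-NF}. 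Geometric simplicity holds because $J-i\omega\mathbb{I}$ has rank three; its first and third rows are independent when $\delta\ne0$.

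First I would shift the equilibrium to the origin and set ${\bf u}={\bf u}_e(\mu)+{\bf w}$. I would then choose a Jordan basis $\zeta_0,\zeta_1$ with $\mathcal{L}\zeta_0=i\omega\zeta_0$ and $\mathcal{L}\zeta_1=i\omega\zeta_1+\zeta_0$. These can be normalized so that $\mathcal{R}\zeta_0=\bar\zeta_0$ and $\mathcal{R}\zeta_1=-\bar\zeta_1$, which is possible because of the anticommutation $\mathcal{R}\mathcal{L}=-\mathcal{L}\mathcal{R}$. Writing ${\bf w}=A\zeta_0+B\zeta_1+\mathrm{c.c.}$, the normal form theorem of \cite{HI2011} gives, up to a near-identity polynomial change of variables that commutes with $\mathcal{R}$,
\[
A_y=i\omega A+B+iAP(\mu,|A|^2,\Xi)+\cdots,\qquad B_y=i\omega B+iBP+AQ(\mu,|A|^2,\Xi)+\cdots,
\]
with $\Xi=\tfrac{i}{2}(A\bar B-\bar AB)$ and real polynomials $P$ and $Q$. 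The coefficients of $Q=-c\,(\mu-\mu_T)+b|A|^2+\cdots$ determine the cases.

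Second, I would check the transversality condition $c\neq0$. Differentiating the discriminant in \eqref{eq:discriminant} with respect to $\mu$ shows that $c$ is proportional to the $\mu$-derivative of $(f_u+\delta^2g_v)^2-4\delta^2(f_ug_v-f_vg_u)$ along the equilibrium family. This derivative is nonzero by the local uniqueness in Hypothesis~\ref{hypo2}, which I would strengthen to nondegeneracy if needed. Rescaling $\mu-\mu_T$ so that the collision is toward the left as in Figure~\ref{fig:eigenvalues} fixes the sign convention of the statement, namely that $\mu>\mu_T$ is the center-center side.

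Third, given $b\neq0$ and $c\neq0$, the four cases are exactly the conclusions of Theorem 3.21 in \cite{HI2011}:
\begin{itemize}
\item on the elliptic side, an equilibrium, periodic orbits, and KAM tori exist for either sign of $b$, and when $b>0$ there are additionally homoclinic orbits to the periodic orbits;
\item on the hyperbolic side with $b<0$, there are the reversible homoclinics to $0$;
\item on the hyperbolic side with $b>0$, there are no small bounded solutions other than the equilibrium.
\end{itemize}

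The main obstacle is computing $b$ explicitly in terms of $f$ and $g$, because the statement is phrased in terms of its sign. I would first carry out the quadratic normal-form elimination: solve $(\mathcal{L}-2i\omega)\Phi_{20}=-\mathcal{Q}(\zeta_0,\zeta_0)$ and $\mathcal{L}\Phi_{11}=-2\mathcal{Q}(\zeta_0,\bar\zeta_0)$, where $\mathcal{Q}$ is the quadratic part of ${\bf F}$ built from the $u^2$ and $uv$ terms. Then I would project the cubic terms onto the adjoint vector $\zeta_1^*$ to obtain $b$. The delicate points are the scaling in $\delta$, since $\omega=\sqrt\delta\,\omega_0$ makes $\mathcal{L}$ nearly degenerate, and the nondegeneracy $b\neq0$, which I would state as an assumption under which the proposition holds.
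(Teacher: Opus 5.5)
Your route is the paper's (Appendix~\ref{app:Turing}). You verify the reversible $(i\omega)^2$ hypotheses of \cite{HI2011} with a Jordan pair normalized by $\mathcal{S}\zeta_0=\overline{\zeta_0}$, $\mathcal{S}\zeta_1=-\overline{\zeta_1}$. You then get $b$ by solving the $A^2$ and $A\overline{A}$ equations and projecting the $A^2\overline{A}$ equation onto $\zeta_1^*$, as in \eqref{eq:b}--\eqref{eq:def-b}.

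The gap is in your third step: your case list does not yield (i) as stated. Item (i) asserts periodic orbits and KAM tori for $b<0$ on \emph{both} sides, including $\tfrac{\mu-\mu_T}{\mu_T}<0$. On the hyperbolic side your summary only provides the homoclinics to $0$. The missing piece of the normal-form dynamics is the following.
\begin{itemize}
\item For $K=\tfrac{i}{2}(A\overline{B}-\overline{A}B)=0$, the truncated system reduces, for $r=|A|$, to $r''=\bigl(-c(\mu-\mu_T)+br^2\bigr)r$.
\item On the hyperbolic side ($-c(\mu-\mu_T)>0$) with $b<0$, the nontrivial equilibria $r_*^2=c(\mu-\mu_T)/b>0$ are centers, enclosed by the homoclinic loops of $r=0$.
\item The circles $|A|=r_*$, $B=0$ are periodic orbits, giving a one-parameter family once $K\neq0$ is allowed.
\item The closed orbits around the centers, together with their $K\neq0$ analogues, give the two-parameter family of tori.
\end{itemize}
These must then be continued to the full system as symmetric periodic orbits and reversible KAM tori. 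This is not cosmetic: they are precisely the small-amplitude orbits born in the subcritical Turing bifurcation, i.e.\ the family Theorem~\ref{thm:2} refers to when $b<0$.

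Second, fixing the sides by ``rescaling $\mu-\mu_T$'' is not a proof of the statement as written. The cases are indexed by the sign of $\tfrac{\mu-\mu_T}{\mu_T}$ for the given parameter. Which side is center--center is therefore a property of the family that must be verified or assumed. A reparametrization either leaves that sign alone or proves the statement for a different parameter.

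Likewise, local uniqueness of $\mu_T$ in Hypothesis~\ref{hypo2} does not give a simple zero, so $c\neq0$ is an extra hypothesis. The paper, too, just requires $a,b\neq0$ and asserts ${\rm sgn}(a\nu)=-{\rm sgn}(\lambda-\lambda_T)$.

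For $\mu=\lambda$ both points can be checked directly:
\begin{itemize}
\item The discriminant in \eqref{eq:discriminant} factors as $D=\bigl(f_u-\delta^2g_v-2\delta\sqrt{-f_vg_u}\bigr)\bigl(f_u-\delta^2g_v+2\delta\sqrt{-f_vg_u}\bigr)$.
\item At $\lambda_T$ the second factor is $\approx4\delta\sqrt{-\alpha_3}>0$.
\item Since $u_e\approx\lambda/(2\alpha_3)$ gives $f_u\approx\lambda/|\alpha_3|$, the first factor has $\lambda$-derivative $1/|\alpha_3|+\mathcal{O}(\delta)>0$.
\item Combined with $f_u+\delta^2g_v>0$ and $f_ug_v-f_vg_u\approx|\alpha_3|>0$, this makes $\lambda>\lambda_T>0$ the side with two imaginary pairs.
\end{itemize}

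Finally, independence of rows $1$ and $3$ of $J-i\omega\mathbb{I}$ only gives rank at least two. For rank three, use for example the minor on rows $1$--$3$ and columns $2$--$4$, which equals $-\delta f_v\neq0$.
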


\noindent
The proof of this proposition is presented in Appendix~\ref{app:Turing}.

\medskip

The key coefficient (on the cubic term) in the normal form is given asymptotically as
\begin{equation}\label{eq:b-exp}
b = b_0 + \delta b_1 + \mathcal{O}(\delta^2),
\end{equation}
\begin{equation}
\begin{split} 
b_0 &= \frac{2 f_v^2 f_{uu}^2 \omega_0^4}{f_v g_u - 8 \omega_0^4}, \hskip0.3truein 
b_1 = \frac{1}{4 g_u (f_vg_u - 8 \omega_0^4)^2} 
\left( B_2 \omega_0^2 + B_6 \omega_0^6 + B_{10} \omega_0^{10} \right), \\
B_2 &= 3 f_v^3 g_u^2 ( f_{uv}f_{uu}g_u - 2 f_{uu}^2 g_v + 2 f_v f_{uu} g_{uu} - f_v f_{uuu} g_u )\\
B_6 &=  4 f_v^2 g_u ( 26 f_{uu}^2 g_v - 13 f_{uv}f_{uu} g_u + 12 f_v f_{uuu} g_u -22 f_v f_{uu} g_{uu}) \\
B_{10} &= 32 f_v ( 10 f_v f_{uu} g_{uu} -10 f_{uu}^2 g_v + 7 f_{uv}f_{uu}g_u - 6 f_v f_{uuu} g_u).
\end{split}
\end{equation}
See \eqref{eq:def-b} in Appendix~\ref{app:Turing} for the exact formula for $b$.
The Turing bifurcation is subcritical for $b<0$ and supercritical for $b>0$.

\begin{corollary}\label{eq:corllary2.1}
The conclusions of Proposition~\ref{prop:haragusiooss} hold for the PDEs \eqref{eq:genAI}, with the parameter $\lambda$
and the value $\lambda_T$ \eqref{eq:lambdaT} playing 
the roles of $\mu$ and $\mu_T$.
\end{corollary}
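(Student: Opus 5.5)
The plan is to obtain the corollary directly from Proposition~\ref{prop:haragusiooss}. The family \eqref{eq:genAI} is embedded in \eqref{eq:genAI-mu} by taking $\mu=\lambda$: $f$ does not depend on $\lambda$, and $g$ depends on it only through the constant term $\tfrac12\lambda$. So the work reduces to checking, in the normal form coordinates of Proposition~\ref{prop1}, that Hypotheses~\ref{familyofequilib} and \ref{hypo2} hold with $\mu_T=\lambda_T$ given by \eqref{eq:lambdaT}, together with the nondegeneracy conditions needed to apply Theorem~3.21 of \cite{HI2011}. Once these are in place, Proposition~\ref{prop:haragusiooss} applies verbatim with $\mu$ replaced by $\lambda$.

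\textbf{Step 1: the family of equilibria (Hypothesis~\ref{familyofequilib}).} Solve $f(u,v)=0$ and $g(u,v)=0$ near the origin. From $f=0$ we have $v=v_0(u)=-u^2+\mathcal{O}(u^3)$. Substituting into $g=0$ gives $\tfrac12\lambda-\alpha_3u+\mathcal{O}(u^2)=0$. Since $\alpha_3\neq0$, the implicit function theorem yields a smooth branch $u_e(\lambda)=\lambda/(2\alpha_3)+\mathcal{O}(\lambda^2)$ and $v_e(\lambda)=v_0(u_e(\lambda))$ for $\lambda$ in a small interval $I$ around $0$. This interval contains $\lambda_T=\mathcal{O}(\delta)$ once $\delta$ is small.

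\textbf{Step 2: the Turing condition (Hypothesis~\ref{hypo2}).} Along the branch we have $f_u=-2u_e+\mathcal{O}(\lambda^2)=-\lambda/\alpha_3+\mathcal{O}(\lambda^2)$, $f_v=-1+\mathcal{O}(\lambda)$ and $g_u=-\alpha_3+\mathcal{O}(\lambda)$. Hence $f_vg_u=\alpha_3+\mathcal{O}(\lambda)<0$ when $\alpha_3<0$. Define
\[
H(\lambda,\delta)=f_u-2\delta\sqrt{-f_vg_u}-\delta^2g_v .
\]
Then $H=-\lambda/\alpha_3-2\delta\sqrt{-\alpha_3}+\mathcal{O}(\lambda^2,\lambda\delta,\delta^2)$. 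Rescaling $\lambda=\delta\Lambda$ and applying the implicit function theorem in $\Lambda$ gives a locally unique zero
\[
\lambda_T=2(-\alpha_3)^{3/2}\delta+\mathcal{O}(\delta^2),
\]
which matches \eqref{eq:lambdaT}. This uses $\partial_\Lambda(H/\delta)=-1/\alpha_3\neq0$, which is exactly the transversality (nonzero speed) condition. The inequality $\sqrt{-f_vg_u}+\delta g_v>0$ holds for small $\delta$ because $\sqrt{-\alpha_3}>0$.

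\textbf{Step 3: the remaining conditions of \cite{HI2011}.} The linear part at $\lambda_T$ must be a nonsemisimple double pair $\pm i\omega$. This follows because the discriminant in \eqref{eq:discriminant} vanishes while $J$ is not diagonalizable, since its $(1,2)$ and $(3,4)$ entries are nonzero. The eigenvalues must also split as in Figure~\ref{fig:eigenvalues} as $\lambda$ crosses $\lambda_T$, which follows from $\partial_\lambda H\neq0$. Finally, the cubic coefficient $b$ must be nonzero. Evaluating \eqref{eq:b-exp} at the equilibrium gives $b_0=2f_{uu}^2\omega_0^4/(\alpha_3-8\omega_0^4)$ with $f_{uu}=-2$. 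This is nonzero, and negative since $\alpha_3<0$, so $b\neq0$ for small $\delta$. Reversibility is \eqref{reversibility}, and the relative parameter $(\lambda-\lambda_T)/\lambda_T$ plays the role of $(\mu-\mu_T)/\mu_T$. With these checks, Proposition~\ref{prop:haragusiooss} transfers.

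\textbf{Main obstacle.} The delicate point is that $\lambda_T=\mathcal{O}(\delta)$ and the frequency is $\omega=\sqrt{\delta}\,\omega_0$, so the bifurcation is singular in $\delta$. The smallness requirement on $(\lambda-\lambda_T)/\lambda_T$ in Theorem~3.21 must therefore be shown to hold for each fixed small $\delta$. This cannot be uniform without rescaling. To handle it, I would apply the theorem for each fixed $\delta\in(0,\delta_0)$, having first rescaled $y$ by $\sqrt{\delta}$ so that the linear part has frequency $\omega_0=\mathcal{O}(1)$. I would then check that the normal form coefficients, in particular $b$, have the limits computed above as $\delta\to0$.
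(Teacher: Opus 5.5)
Your proposal is correct and follows the paper's route: embed \eqref{eq:genAI} in \eqref{eq:genAI-mu} with $\mu=\lambda$, place the equilibrium at $u_e=\lambda/(2\alpha_3)$ to leading order, read off $\lambda_T=2(-\alpha_3)^{3/2}\delta$ from the first condition in \eqref{eq:Turing-condition}, and invoke Proposition~\ref{prop:haragusiooss}; your checks of $a\neq0$ (transversal crossing of the discriminant) and $b\neq0$ (indeed $b_0=-\tfrac{2}{9}f_v^2f_{uu}^2=-\tfrac{8}{9}$) go beyond what the paper writes. One small fix: the minor of $J-i\omega\mathbb{I}$ on rows $1,2,3$ and columns $2,3,4$ equals $-\delta f_v\neq0$, so non-semisimplicity rests on $f_v\neq0$ as well, not only on the nonzero $(1,2)$ and $(3,4)$ entries.
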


\section{The Layer Problem, Critical and Slow Manifolds, Desingularized Reduced System, and Folded Singularities of \texorpdfstring{\eqref{eq:y-ODE}}{Lg}}
\label{sec:layerproblem+criticalmanifold+desingularizedsystem}

In this section, we study the layer problem of \eqref{eq:y-ODE}, which governs the fast spatial dynamics.
Also, we identify the critical manifold, derive the desingularized reduced slow flow on it, and identify the key folded singularities responsible for generating the canard solutions of \eqref{eq:y-ODE}.

\subsection{Layer problem}\label{sec:layerprob}

We begin by setting $\delta=0$ in \eqref{eq:y-ODE}.
This yields the layer system (or reduced fast system),
\begin{equation} \label{layer}
\begin{split}
    u_y &= p \\
    p_y &= v + u^2 + \alpha_1 uv + \alpha_2 u^3 + \mathcal{O}(v^2,u^2v,u^4),
\end{split}
\end{equation}
in which $v$ and $q$ are constants.
It is a planar Hamiltonian system, with
\begin{equation}\label{H_f}
H_f (u,p;v) = \frac{1}{2} p^2 - uv - \frac{1}{3} u^3 
-\frac{\alpha_1}{2} u^2v - \frac{\alpha_2}{4} u^4  
+ \mathcal{O}(uv^2, u^3v, u^5).
\end{equation}

The layer problem has a curve of equilibria given by
\begin{equation}
   \mathcal{C} = \{ (u,p;v): u\in \mathbb R, \ 
   p=0, \  v = v_0(u) = - u^2 + (\alpha_1 - \alpha_2) u^3 + \mathcal{O}(u^4) \},
\end{equation}
see also \eqref{eq:v0}.
Linearizing the layer problem at points on $\mathcal{C}$, we find the  Jacobian 
\begin{equation*}
    J \vert_{\mathcal{C}} = 
    \left[ 
    \begin{array}{cc}
      0 & 1 \\
      2u + (3\alpha_2 - \alpha_1) u^2 +\mathcal{O}(u^3)& 0
    \end{array} 
    \right].
\end{equation*}
The trace of $J\vert_{\mathcal{C}}$ is zero, and its determinant is 
\begin{equation}\label{eq:detJ}
{\rm det} J\vert_{\mathcal{C}} = -2u - (3\alpha_2 - \alpha_1) u^2 + \mathcal{O}(u^3).
\end{equation}
Hence, in a neighborhood of the origin, the equilibria on $\mathcal{C}$ are saddle fixed points of \eqref{layer} for $u>0$, 
and they are centers for $u<0$.

\subsection{The critical manifold and its saddle and center sheets}

In the full $(u,p,v,q)$ space of \eqref{eq:y-ODE}, the equilibria on $\mathcal{C}$ form the two-dimensional critical manifold 
\begin{equation} \label{criticalmanifold}
    S^0 = \left\{ (u,p,v,q): 
        u \in \mathbb{R}, \, \, 
          p = 0, \, \, v = v_0(u) = -u^2 +(\alpha_1-\alpha_2) u^3 + \mathcal{O}(u^4), \, \, q \in \mathbb{R} 
        \right\} \, .
\end{equation}
It consists of two sheets that are separated by a fold line,  
\begin{equation}\label{eq:union}
S^0  = S^0_s \cup  L \cup S^0_c,
\end{equation}
\begin{equation*} 
\begin{split}
    S^0_s &= \left\{
          u >0, \, p = 0, \, v=v_0(u), \, q \in \mathbb{R}
        \right\}, \\
    L &= \{ u = 0, \, p=0, \, v=0, \, u=0, \, q \in \mathbb{R} \}, \\ 
    S^0_c &= \left\{
          u < 0, \, p = 0, \, v=v_0(u), \, q \in \mathbb{R}
        \right\}.
\end{split}
\end{equation*}
Here, $S_s^0$ is the saddle sheet and $S_c^0$ the center sheet, due to the stability types of the equilibria on $\mathcal{C}$, with the subscripts $s$ and $c$ denoting saddle and center. 
See Figure~\ref{fig:criticalmanifold} for a sketch of $S^0$.

\begin{figure}[ht!]
   \centering
   \includegraphics[width=5in]{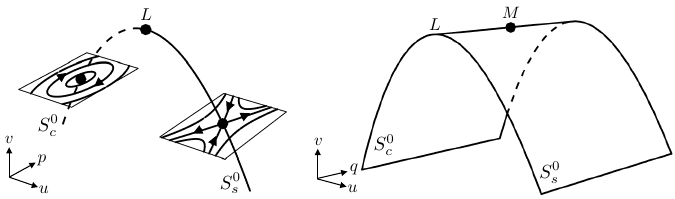}
   \put(-360,94){(a)}
   \put(-188,94){(b)}
   \caption{Projection of the critical manifold $S^0$ into the (a) $(u,p,v)$ space and (b) $(u,q,v)$ space. The hyperbolic saddle subset $S_s^0$ is separated from the elliptic center subset $S_c^0$ by the fold set $L$. 
   The analysis in Section~\ref{sec:MRFS+MRFSNII}
   shows that there is a distinguished point, $M \in L$, where the projection of the reduced flow onto the slow $(v,q)$ plane is tangent to $L$ at $M$.}
   \label{fig:criticalmanifold}
\end{figure}

We are particularly interested in the saddle sheet $S^0_s$, because away from the fold line $L$ it is a normally hyperbolic invariant manifold for $\delta=0$.
We examine its geometry in the four-dimensional phase space of \eqref{eq:y-ODE} with $\delta=0$.
Let $u_s(v_0)$ denote the solution of $v_0=-u^2 + (\alpha_1 - \alpha_2) u^3 +\mathcal{O}(u^4)$ that represents $S^0_s$.
Over each point $(u_s(v_0),0,v_0,q_0) \in S^0_s$, there exist one-dimensional fast stable and unstable fibers. 
These are given by the local stable and unstable manifolds $W^{s,u}_{\rm loc}(u_s(v_0),0)$ of the saddle equilibrium of \eqref{layer}.
The unions of these one-dimensional fibers over all points on $S^0_s$ form the three-dimensional local stable and unstable manifolds of $S^0_s$:
\begin{eqnarray*}
    W^s(S^0_s) &=& \bigcup_{(u_s(v_0),0,v_0,q_0) \in S^0_s} W^s_{\rm loc}(u_s(v_0),0) \\ 
     W^u(S^0_s) &=& \bigcup_{(u_s(v_0),0,v_0,q_0) \in S^0_s} W^u_{\rm loc}(u_s(v_0),0).
\end{eqnarray*} 

Now, fix a small $\Delta>0$.
Since $S^0_s$ is normally hyperbolic for $u< - \Delta$, it persists in $\{ u > 0 \}$ for $0<\delta \ll 1$.
Indeed, by the Fenichel theory for the persistence of normally hyperbolic critical manifolds (see \cite{F1979,HPS1977,J1995}) there is a family of saddle, slow, invariant manifolds $S^\delta_s$,
\begin{equation}\label{eq:S-delta}
    S^\delta_s 
    = \left\{ 
u = u_s(v) + \mathcal{O}(\delta^2), 
p = -\delta \left( \tfrac{ q }{2 u_s(v)} + \tfrac{3q}{4}(\alpha_1 - \alpha_2) \right) +\mathcal{O}(\delta u_s(v), \delta^2),  
v \ge 0 , 
q \in \mathbb{R}
    \right\}.
\end{equation}
These are $C^r$ $\mathcal{O}(\delta)$ close to $S^0_s$ for any $r>0$.

For $0<\delta \ll 1$, $W^s_{\rm loc}(S^0_s)$ and $W^u_{\rm loc}(S^0_s)$  persist as invariant stable and unstable manifolds $W^s_{\rm loc}(S^\delta_s)$ and $W^u_{\rm loc}(S^\delta_s)$ in the phase space of \eqref{eq:y-ODE}.
This also follows from the Fenichel theory for the persistence of normally hyperbolic invariant manifolds \cite{F1979,HPS1977,J1995}.
Moreover, in a neighborhood of $S_s^\delta$, these perturbed manifolds are $C^r$ $\mathcal{O}(\delta)$ close to their unperturbed counterparts.
We will show that $W^u(S^\delta_s)$ and $W^s(S^\delta_s)$ intersect transversely (see Section~\ref{sec:proof-theorem2}).

\begin{remark}
  In the general situation that we study in which the null set $\{ F=0 \}$ in the PDE \eqref{eq:genAI-UV} is given by the graph of a function $V=V_0(U)$ and $(U_f,V_f)$ is a non-degenerate fold point of \eqref{eq:genAI-UV} with $V_f=V_0(U_f)$,
the sheet $S_s^0$ corresponds to the following component of the null set:
\begin{equation}\label{eq:calU}
    \mathcal{U} = \left\{ (U,V) :   V=V_0(U), \, \, \,  
 \left. \frac{\partial F}{\partial U} \right\vert_{V_0(U)} < 0 \right\},
\end{equation}
and the boundary of $\cal{U}$ corresponds to the fold line $L$, where $\left. \frac{\partial F}{\partial U} \right\vert_{V_0(U)} = 0$.  
\end{remark}

\subsection{Desingularized reduced vector field on the critical manifold \texorpdfstring{$S^0$}{Lg}}

On the critical manifold $S^0$, the equations are given by \eqref{eq:x-ODE} with $\delta=0$,
\begin{equation}\label{eq:onS0}
\begin{split}
    0 &= p \\
    0 &= v + u^2 + \alpha_1 uv + \alpha_2 u^3 + \mathcal{O}(v^2,u^2v,u^4) \\
    v_x &= q \\
    q_x &= -\frac{\lambda}{2} + \alpha_3 u + \alpha_4 v + \alpha_5 u^2 + \mathcal{O}(uv,v^2,u^3).
\end{split}
\end{equation}
This is an algebraic-differential system in which the two algebraic conditions define $S^0$ and the two differential equations govern the dynamics of solutions on $S^0$.

All four variables $(u, p, v, q)$ arise naturally in the algebraic conditions, as well as in the differential equations on $S^0$. 
However, because $S^0$ is a two-dimensional manifold, one may obtain a
self-contained vector field involving only two variables that governs the dynamics on it.
Moreover, due to the parabolic cylinder shape of $S^0$, it is convenient to use the $(u,q)$ variables.
In particular, we differentiate the second condition with respect to $x$ and evaluate the terms on $S^0$ to obtain
\begin{equation}\label{eq:reduced}
\begin{split}
    u(2 + (3\alpha_2-\alpha_1)u + \mathcal{O}(u^2)) u_x &= -(1+\alpha_1 u +\mathcal{O}(u^2))  q \\
    q_x &= -\frac{\lambda}{2} + \alpha_3 u + (\alpha_5-\alpha_4) u^2 + \mathcal{O}(u^3).
\end{split}
\end{equation}
This system is referred to as the reduced (slow) system.
It is singular on the fold line $L$, {\it i.e.}, at $u=0$.
Solutions approach $L$ in finite time, forward and backward.
See Figure~\ref{fig:reducedflow}.

\begin{figure}[ht!]
   \centering
   \includegraphics[width=5in]{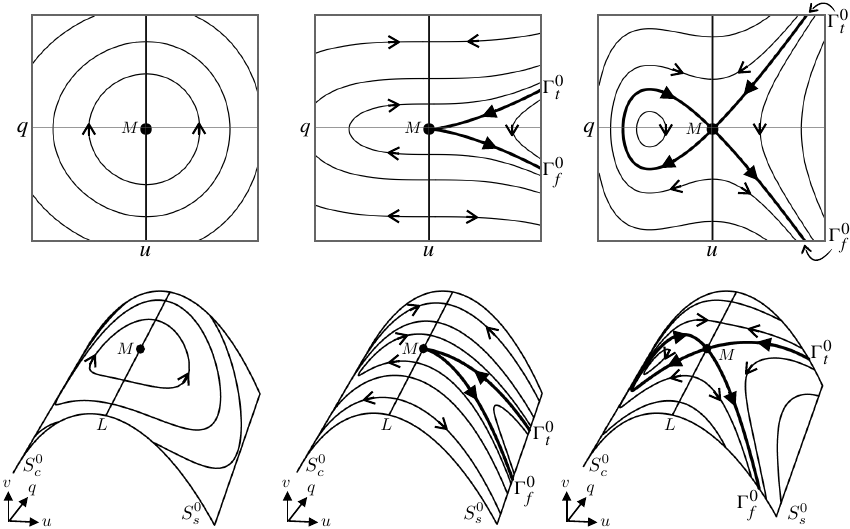}
   \caption{Projection of the reduced flow \eqref{eq:reduced} with $\alpha_3 < 0$ into the $(u,q)$ plane (top row) and into the $(u,q,v)$ space (bottom row), for three different values of $\lambda$.
   The critical manifold has a hyperbolic saddle sheet, $S_s^0$, for $u>0$ and an elliptic center sheet, $S_c^0$, for $u<0$. 
   As shown in Section~\ref{sec:MRFS+MRFSNII}, there is a reversible folded singularity, $M$, along the fold curve, $L$.
   {\bf Left column:} with $\lambda<0$, $M$ is a RFC. 
   {\bf Middle column:} with $\lambda=0$, $M$ is a RFSN II with true and faux canards, $\Gamma_t^0$ and $\Gamma_f^0$, that meet at $M$ in a cusp. 
   {\bf Right column:} with $\lambda>0$, $M$ is a RFS with true canard, $\Gamma_t^0$, that passes from $S_s^0$ to $S_c^0$, and faux canard, $\Gamma_f^0$, that passes from $S_c^0$ to $S_s^0$.
   These orbits form a homoclinic loop on $S_c^0$.}
   \label{fig:reducedflow}
\end{figure}

Next, to study the dynamics of the reduced system, we desingularize it by rescaling the independent variable from $x$ to $x_d$, where
$\frac{d}{dx_d} = u(2 + (3\alpha_2-\alpha_1)u + \mathcal{O}(u^2))\frac{d}{dx}$. 
The system becomes 
\begin{equation}\label{eq:desingularized-reduced}
\begin{split}
    u_{x_d} &= -(1+\alpha_1 u +\mathcal{O}(u^2))  q \\
    q_{x_d} &= -\lambda u + \left( 2\alpha_3 + \frac{\lambda}{2} ( \alpha_1 - 3\alpha_2) \right) u^2 + \mathcal{O}(u^3).
\end{split}
\end{equation}
This system is often referred to as the desingularized reduced system,
or simply the desingularized system.
The rescaling preserves the direction of the flow on $S^0_s$ since
$u>0$ there; {\it i.e.,} the flow of \eqref{eq:desingularized-reduced} is topologically equivalent to that of \eqref{eq:reduced} on $S^0_s$.
In contrast, the direction of the flow on $S^0_c$, which is described by the same desingularized system, is reversed, since $u<0$ on $S^0_c$.

\begin{remark}\label{rem:Hamiltonian-desingularizedsystem}
After another rescaling of the independent variable, by $(1+\alpha_1 u + \mathcal{O}(u^2))^{-1}$,
we see that \eqref{eq:desingularized-reduced} is a one degree of freedom Hamiltonian system, with $H_d(u,q;\lambda)=\tfrac{1}{2}q^2 + \int^u n(\bar{u}) d\bar{u},$
where $n(u)= \left( -\lambda u + (2\alpha_3 + \tfrac{\lambda}{2}(\alpha_1 - 3\alpha_2))u^2 + \mathcal{O}(u^3)\right) \left(1 + \alpha_1 u + \mathcal{O}(u^2)\right)^{-1}.$
Hence, the folded singularities must be either folded saddles, folded saddle-nodes, or folded centers.
\end{remark}

\subsection{The folded singularities \texorpdfstring{$M_{\rm RFS}$ and $M_{\rm RFSN-II}$}{Lg}}
\label{sec:MRFS+MRFSNII}

For each $\lambda\ne 0$, the desingularized reduced vector field \eqref{eq:desingularized-reduced} has two singularities: a folded singularity and an ordinary singularity.
The former is the fold point
\begin{equation*}
M = \{ u=0, \ q=0 \},
\end{equation*}
where $f$ and $\frac{\partial f}{\partial u}=0$.
It lies on the fold line $L$.
Moreover, $M$ is a {\it reversible} folded singularity
due to the reversibility symmetry \eqref{eq:reversible}.
We add that, for $\lambda\ne 0$, $M$ is not an equilibrium of the full slow flow \eqref{eq:onS0}, since $g = \frac{1}{2} \lambda$ there.
Then, the latter singularity is the equilibrium,
\begin{equation}
  E = \left\{ u = \frac{2\lambda}{4\alpha_3 + \lambda (\alpha_1 - 3 \alpha_2)}, \ \ q = 0 \right\}.
\end{equation}
It is also an equilibrium of \eqref{eq:onS0}, where $g$ vanishes.
The formula is for the leading order.

The trace of the Jacobian matrix of \eqref{eq:desingularized-reduced} is $-(\alpha_1 + \mathcal{O}(u))q$. 
Hence, the trace is zero at both $M$ and $E$.
The determinant of the Jacobian is $-\lambda + (4\alpha_3 -3\alpha_2\lambda)u + \mathcal{O}(u^2)$.
In particular, at $M$, the determinant is exactly $-\lambda$, whereas at $E$ it is $\lambda + \mathcal{O}(\lambda^2)$ for small $\lambda>0$.
Hence, from the general classification of folded singularities \cite{B1983,KW2010,SW2001}, we have the following linear stability results.

For $\lambda<0$, $M$ is a reversible folded center (RFC), and E is a saddle fixed point.
Thus, there are no canards, since FCs do not have canards \cite{SW2001}.
See Figure~\ref{fig:reducedflow}(a).

Then, at $\lambda=0$, $E$ merges with $M$ at $(0,0)$ in a reversible folded saddle-node of type II,
\begin{equation}
\label{eq:M-RFSNII}
M_{\rm RFSN-II} = \{ u=0, \, q=0, \, \lambda=0 \}.
\end{equation}
The stable and unstable manifolds of $M_{\rm RFSN-II}$ each have one branch.
They are cusp shaped near the origin.
They form the true and faux canards, $\Gamma_{t}^0$ and $\Gamma_{f}^0$, of $M_{\rm  RFSN-II}$. 
See \cite{KW2010} for the analysis of general FSN-II points, and see Figure~\ref{fig:reducedflow}(b) for an illustration of the local dynamics here.

Most interestingly, for each $\lambda>0$, $M$ is a reversible folded saddle (RFS),
\begin{equation}\label{eq:M-RFS} 
M_{\rm RFS}=\{ u=0, \, q=0, \, \lambda>0 \},
\end{equation}
and $E$ is a center.
Moreover, the true canard $\Gamma_t^0$ of $M_{\rm RFS}$ is given by the two-branched stable manifold of $M_{\rm RFS}$, and the faux canard $\Gamma_f^0$ by the two-branched unstable manifold.
See \cite{B1983,SW2001,MW2017} for the analysis of general FS points, and 
Figure~\ref{fig:reducedflow}(c) for an illustration of the local dynamics here.

\begin{proposition}\label{prop:singularcanardsconnect}
In the $(u,q)$ phase plane of the reduced system \eqref{eq:reduced}, the branches of the singular true and faux canards $\Gamma_t^0$ and $\Gamma_f^0$ in $\{ u\le 0\}$ coincide in an orbit homoclinic to the saddle $(0,0)$.
\end{proposition}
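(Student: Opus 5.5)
The plan is to use the Hamiltonian structure of the desingularized system noted in Remark~\ref{rem:Hamiltonian-desingularizedsystem}, together with the reversibility symmetry \eqref{eq:reversible}. Orbits of \eqref{eq:reduced} in $\{u<0\}$ coincide as point sets with orbits of \eqref{eq:desingularized-reduced}; only the direction of time is reversed. Moreover, the additional time rescaling by $(1+\alpha_1 u+\mathcal{O}(u^2))^{-1}$ is positive near the origin. So it suffices to show that the stable and unstable manifolds of the saddle $M=(0,0)$ of the Hamiltonian system with $H_d(u,q;\lambda)=\tfrac12 q^2+N(u)$, where $N(u)=\int_0^u n(\bar u)\,d\bar u$, coincide in $\{u\le 0\}$. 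Here I take $\lambda>0$, the RFS case with $\alpha_3<0$ as in Figure~\ref{fig:reducedflow}; the case $\lambda=0$ is the cusp limit and is handled the same way.

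First, I will compute $N(u)=-\tfrac{\lambda}{2}u^2+\tfrac13\bigl(2\alpha_3+\tfrac{\lambda}{2}(\alpha_1-3\alpha_2)\bigr)u^3+\mathcal{O}(u^4,\lambda u^3)$. Both canards lie on the level set $\{H_d=0\}$, that is, on $q=\pm\sqrt{-2N(u)}$. For $u<0$ near zero, $N(u)<0$ provided $-\tfrac{\lambda}{2}u^2$ dominates, and $N$ has a nontrivial zero at
\begin{equation*}
u_*=\frac{3\lambda}{4\alpha_3+\lambda(\alpha_1-3\alpha_2)}+\mathcal{O}(\lambda^2)<0,
\end{equation*}
since $\alpha_3<0$. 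So $u_*$ lies to the left of the center $E$ at $u_E\approx 2u_*/3$. Between $u_*$ and $0$ the level set $\{H_d=0\}\cap\{u\le 0\}$ is therefore a closed curve made of the two graphs $q=\pm\sqrt{-2N(u)}$. These graphs meet at $M$ and at the turning point $(u_*,0)$. At $(u_*,0)$ we have $N'(u_*)\ne 0$, so it is a regular point, and $\{H_d=0\}$ is a smooth curve crossing $q=0$ vertically there.

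Second, I will identify the two branches. Near $M$, the stable manifold branch entering $\{u<0\}$ is the upper graph (or the lower one, depending on sign conventions), and the unstable branch is the other. This follows from the eigenvectors $(1,\mp\sqrt{\lambda})$ of the linearization at $M$. Flowing along the upper branch, the orbit cannot leave $\{H_d=0\}$ and cannot stop before $(u_*,0)$: the only equilibria in $[u_*,0]$ are $M$ and $E$, and $E$ has $H_d(E)=N(u_E)<0$, so $E$ is not on the level set. The orbit therefore passes through $(u_*,0)$ and continues along the lower branch back to $M$. By reversibility $(u,q)\mapsto(u,-q)$, the portion of this orbit in $q<0$ is the mirror image of the portion in $q>0$. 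Hence $\Gamma_t^0\cap\{u\le0\}$ and $\Gamma_f^0\cap\{u\le0\}$ are the same curve, which is a homoclinic loop to $M$ enclosing $E$. Finally, I will translate back to the reduced system \eqref{eq:reduced}: the loop is unchanged as a set, and $M$ is the saddle of the desingularized flow, as claimed.

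The main obstacle is uniformity near the cusp, where $\lambda\to 0^+$ and the loop shrinks to size $\mathcal{O}(\lambda)$. There one must check that the $\mathcal{O}(u^4)$ remainders in $N$ do not destroy the sign of $N$ on $(u_*,0)$ or the simple zero at $u_*$. I would rescale $u=\lambda U$ and $q=\lambda^{3/2}Q$, so that $N=\lambda^3\bigl(-\tfrac12U^2+\tfrac23\alpha_3U^3+\mathcal{O}(\lambda)\bigr)$. The scaled polynomial has a simple zero at $U=3/(4\alpha_3)$ and is negative on $(3/(4\alpha_3),0)$, so the implicit function theorem gives the conclusion uniformly for small $\lambda>0$.
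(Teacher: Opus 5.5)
Your argument is correct and follows the paper's approach. The paper simply observes that the zero level set of $H_d$ through $M$ is a simple closed loop in $\{u\le 0\}$. You supply the details it omits: the explicit second zero $u_*$ of $N$, the sign of $N$ on $(u_*,0)$, the exclusion of $E$ from the level set, and the rescaling $u=\lambda U$, $q=\lambda^{3/2}Q$ that gives uniformity as $\lambda\to 0^+$.

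One correction: the case $\lambda=0$ is not ``handled the same way.'' There $N(u)=\tfrac{2}{3}\alpha_3u^3+\mathcal{O}(u^4)>0$ for $u<0$, so the loop collapses to the point $M$, which is then a saddle-node rather than a saddle, and the canards lie in $\{u\ge 0\}$. The proposition is therefore a statement about $\lambda>0$ only.
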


\begin{proof}
From Remark~\ref{rem:Hamiltonian-desingularizedsystem},
we recall the Hamiltonian $H_d(u,q;\lambda)$ for the rescaled version of the  desingularized reduced system \eqref{eq:desingularized-reduced}. 
The folded singularity $M$ lies on the level set $H_d=0$.
In $\{ u \leq 0 \}$, this level set is a simple closed loop that connects $M_{\rm RFS}$ to itself for each $\lambda>0$. 
\end{proof}

A central observation about the folded singularity $M$ is that, for $\lambda\ne 0$, it is not a fixed point of the reduced system \eqref{eq:reduced}.
The fact that the coefficient on $u_x$ vanishes at $u=0$ implies that solutions approach $M$ in finite time, both in forward and backward $x$,
and that multiple solutions may pass through this point.
There is a l'Hopital type cancellation in the first component of the vector field at $M$, so that $u_x \ne 0$ there and solutions of \eqref{eq:reduced} that lie along the singular canards of $M$ pass through it with finite non-zero speed.
This is the mechanism by which solutions of the reduced system can pass across $L$ from $S_s^0$ to $S_c^0$ and vice versa.
This situation contrasts with that in the desingularized system \eqref{eq:desingularized-reduced}, where $M$ is a hyperbolic fixed point, and the singular canards approach $M$ in infinite time along its stable and unstable manifolds.
(We refer the reader to \cite{T1976} for general theory about these solutions in differential-algebraic systems obtained in the limit of fast-slow ODEs.
We also refer to Section 2 of \cite{KS2001}.)

\begin{remark}
    $M_{\rm RFSN-II}$ exists in the singular limit ($\delta \to 0$) of the subcritical Turing bifurcation.
    This relation between the Turing point and the RFSN-II point was a central discovery in \cite{VDK2025}, for the van der Pol PDE. 
    The same phenomenon was observed in the Brusselator PDE in \cite{JDKV2026}.
\end{remark}

\section{Geometric desingularization of \texorpdfstring{$M_{\rm RFS}$}{Lg}}\label{sec:geodesing}

In this section, we present the geometric desingularization (a.k.a. blow-up) analysis of the RFS point, $M_{\rm RFS}$ \eqref{eq:M-RFS}.
We consider $\mathcal{O}(1)$ values of $\lambda>0$ in \eqref{eq:y-ODE}. 
The blow-up transformation is
\begin{equation}\label{eq:blowup}
u = r^2 \bar{u}, \, \, \,
p = r^3 \bar{p}, \, \, \,
v = r^4 \bar{v}, \, \, \,
q= r^2 \bar{q}, \, \, \,
\delta = r^3 \bar{\delta}.
\end{equation}
This transformation is a map
$\Phi: \mathbb{S}^4 \times [-r_0,r_0]  \to \mathbb{R}^5,$
where $\bar{u}^2 + \bar{p}^2 + \bar{v}^2 + \bar{q}^2 + \bar{\delta}^2 = 1$ and $r_0>0$ is sufficiently small, independent of $\delta$.
The variables $u$ and $q$ scale with the same powers of $r$ as $r \to 0$, because $M_{\rm RFS}$ is a folded saddle whose stable and unstable manifolds intersect transversely in the $(u,q)$ plane, with $\mathcal{O}(1)$ angle independent of $\delta$ and $r$.
Moreover, $v$ scales as the square of $u$ to balance the two leading order terms in the second component of \eqref{eq:y-ODE}.

We examine the dynamics induced by \eqref{eq:y-ODE} in two coordinate charts:
the central (or rescaling) chart $K_2$ defined by $\{\bar{\delta}=1\}$ and 
the entry/exit chart $K_1$ defined by $\{ \bar{u}=1 \}$.
See Sections~\ref{sec:K2} and \ref{sec:K1}.
In $K_2$, we find the algebraic solutions that form the singular true and faux canards of $M_{\rm RFS}$.
Then, we track the singular canards into chart $K_1$ (see Section~\ref{sec:transitionmap-kappa21}), and identify the key center manifolds, showing that these intersect and that the singular canards persist as maximal canards for $0<\delta \ll 1$, which lie in these intersections, see Section~\ref{sec:persistence-RFS}.
We use $\square_i$ to denote variable $\bar\square$ in chart $K_i$, $i=1,2$.

\subsection{The central chart \texorpdfstring{$K_2$ for $M_{\rm RFS}$}{Lg} } \label{sec:K2}
In the central chart $K_2$ where $\delta_2\equiv 1$, the blow-up transformation is
\begin{equation} \label{eq:K2-coords}
K_2: \, \, 
  u = r_2^2 u_2, \, \,  \,
  p = r_2^3 p_2, \, \,  \,
  v = r_2^4 v_2, \, \,  \, 
  q = r_2^2 q_2, \, \,  \,
  \delta=r_2^3.    
\end{equation}
Substituting this coordinate change into system \eqref{eq:y-ODE}
with the equation $\delta_y=0$ appended,
and introducing the independent variable $y_2$ defined 
by $\frac{d}{dy_2} = \frac{1}{r_2} \frac{d}{dy}$ (which desingularizes the system),
we find that the governing equations in $K_2$ are
\begin{equation} \label{eq:K2}
\begin{split}
    \frac{du_2}{dy_2} &= p_2, \\ 
    \frac{dp_2}{dy_2} &= v_2 + u_2^2 + r_2^2 h_2(u_2,v_2,r_2), \\
    \frac{dv_2}{dy_2} &= q_2, \\
    \frac{dq_2}{dy_2} &= -\frac{1}{2}\lambda + r_2^2 k_2(u_2,v_2,r_2),\\
    \frac{dr_2}{dy_2} &= 0,
\end{split}
\end{equation}
where $h_2= \alpha_1 u_2 v_2 + \alpha_2 u_2^3 + \mathcal{O}(r_2^2)$
and $k_2= \alpha_3 u_2 + r_2^2(\alpha_4 v_2 + \alpha_5 u_2^2) + \mathcal{O}(r_2^4)$.

For each $r_2 \ge 0$, the hyperplane $\{ r_2 = {\rm constant} \}$ is invariant.
On $\{ r_2 = 0 \}$, the vector field is 
\begin{equation} \label{eq:K2-on-r2=0}
\begin{split}
    \frac{du_2}{dy_2} &= p_2, \\ 
    \frac{dp_2}{dy_2} &= v_2 + u_2^2, \\
    \frac{dv_2}{dy_2} &= q_2, \\
    \frac{dq_2}{dy_2} &= -\frac{1}{2}\lambda.
\end{split}
\end{equation}
The equations for $v_2$ and $q_2$ decouple.
This system possesses the following key algebraic solutions:
\begin{equation} \label{eq:gamma_TF}
 \begin{split}
    \Gamma_t^0 (y_2) &= \left( -\frac{1}{2}\sqrt{\lambda} y_2, \, -\frac{1}{2} \sqrt{\lambda},  \, -\frac{1}{4} \lambda y_2^2, \, -\frac{1}{2}\lambda y_2 \right) \qquad y_2 \in \mathbb{R},\\
    \Gamma_f^0 (y_2) &= \left( \frac{1}{2}\sqrt{\lambda} y_2, \,\, \frac{1}{2} \sqrt{\lambda}, \, -\frac{1}{4} \lambda y_2^2, \, -\frac{1}{2}\lambda y_2 \right) \qquad y_2 \in \mathbb{R}.
\end{split}
\end{equation}
These represent the singular true and faux canards, respectively, of $M_{\rm RFS}$ on $\{ r_2=0 \}$.

The local dynamics near $\Gamma^0_t$ and $\Gamma_f^0$ are established in the following proposition: 
\begin{proposition}\label{prop:twist}
In the limit $r_2 \to 0$, solutions of \eqref{eq:K2} near $\Gamma_t^0$ can exhibit infinitely many twists on $S_c^0$, but none on $S_s^0$.
In contrast, solutions of \eqref{eq:K2} near $\Gamma_f^0$ can exhibit infinitely many twists on $S_s^0$, but none on $S_c^0$.
\end{proposition}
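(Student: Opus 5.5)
The plan is to reduce the twist question to a scalar Airy-type variational equation along each algebraic solution in \eqref{eq:gamma_TF}. Then we read off, on each half-line of $y_2$, whether the equation is oscillatory (infinitely many twists) or exponential (none). Finally, each half-line is matched to the sheet of $S^0$ that the canard occupies there. The limit $r_2\to0$ is handled by working with \eqref{eq:K2-on-r2=0}. Regular perturbation in $r_2$ on compact $y_2$-intervals then transfers the conclusions to \eqref{eq:K2} for $r_2$ small.

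\emph{Step 1 (variational equation).} Write a solution of \eqref{eq:K2-on-r2=0} near $\Gamma$ as $\Gamma(y_2)+(\xi,\eta,\zeta,\theta)$. The $(v_2,q_2)$ subsystem is linear and decoupled, so $\zeta=c_1+c_2y_2$ exactly. Linearizing in the remaining variables gives
\begin{equation*}
\xi'' = 2u_2^\Gamma(y_2)\,\xi + \zeta(y_2).
\end{equation*}
Along $\Gamma_t^0$ we have $2u_2=-\sqrt{\lambda}\,y_2$. Rescaling $s=-\lambda^{1/6}y_2$ puts the homogeneous equation in the Airy form $\xi_{ss}=s\,\xi$. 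The inhomogeneous term only adds a polynomial particular solution, so it does not change the zero-counting. Twists of nearby solutions about $\Gamma$ are counted by the number of rotations of $(\xi,\xi')$ around the origin. Sturm comparison shows this number is infinite on the half-line where $2u_2^\Gamma<0$ and at most one where $2u_2^\Gamma>0$. The Airy asymptotics $\mathrm{Ai},\mathrm{Bi}\sim|s|^{-1/4}\cos(\tfrac23|s|^{3/2}+\cdot)$ for $s\to-\infty$ give the same picture.

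\emph{Step 2 (sheet identification).} On $\{r_2=0\}$ the sign of $u_2$ determines the sheet, because $u=r_2^2u_2$: $u_2>0$ is $S_s^0$ and $u_2<0$ is $S_c^0$. For $\Gamma_t^0$, the oscillatory half-line is $y_2>0$, where $u_2<0$, so it lies on $S_c^0$. The exponential half-line is $y_2<0$, where $u_2>0$, so it lies on $S_s^0$. This is the first assertion. For $\Gamma_f^0$, I would not recompute directly. Instead I would use the reversor $\mathcal R$ of \eqref{eq:reversible}, which satisfies $\Gamma_f^0(y_2)=\mathcal R\,\Gamma_t^0(-y_2)$. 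Under $\mathcal R$, the twist count of a nearby orbit at $y_2$ transfers to that of its image at $-y_2$.

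\emph{Main obstacle.} Naively, $\mathcal R$ preserves $u$ and hence preserves sheets. Read that way, the oscillatory half-line of $\Gamma_f^0$, namely $y_2<0$, again sits where $u_2<0$. That contradicts the stated assignment. So the crux, and where I expect the work, is making precise what ``twists on $S_s^0$'' means for orbits near $\Gamma_f^0$. I would first try to define twisting relative to the direction of passage given by the desingularized flow \eqref{eq:desingularized-reduced}. That flow is reversed on $S_c^0$, so it interchanges the roles of the incoming and outgoing half-lines of $\Gamma_f^0$. The twisting accumulated as $y_2\to-\infty$ would then be attributed, after transport through chart $K_1$ via $\kappa_{21}$, to the branch on which $\Gamma_f^0$ connects to $S_s^\delta$. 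If that bookkeeping does not reproduce the claim, I would check the sign convention in \eqref{eq:gamma_TF} against \eqref{eq:reduced}, since the claimed dichotomy is exactly the $u_2\mapsto-u_2$ mirror of Step 1. The conclusion would then be that the labelling of $\Gamma_f^0$ versus $\Gamma_t^0$ must be adjusted. Persistence for small $r_2>0$ is routine once the $r_2=0$ statement is settled.
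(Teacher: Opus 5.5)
Your treatment of $\Gamma_t^0$ is correct and follows the paper's argument. The paper straightens $\Gamma_t^0$, so that along it $\tilde q_2=y_2$ and $u_2=-\tfrac12\sqrt{\lambda}\,\tilde q_2$. It then obtains an inhomogeneous Airy equation for $\eta_1$ whose coefficient is $2u_2=-\sqrt{\lambda}\,\tilde q_2$, and reads off oscillation exactly where $u_2<0$, i.e.\ on $S_c^0$.

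One small inaccuracy in your Step 1: the forcing $\zeta=c_1+c_2y_2$ has a polynomial (constant) particular solution only for its $c_2$ part. The constant $c_1$ requires a Scorer function ($\mathrm{Gi}$ on one half-line, $\mathrm{Hi}$ on the other, as in the paper). That particular solution decays like $-c_1/(a\,y_2)$ when $2u_2^\Gamma=a\,y_2$, so the twist count is still unaffected and nothing breaks.

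On $\Gamma_f^0$, the obstacle you found is real, but it lies in the statement, not in your argument, and no bookkeeping will remove it. Along $\Gamma_f^0$ one has $2u_2=\sqrt{\lambda}\,y_2$. Hence nearby solutions can twist infinitely often only on $y_2<0$, where $u_2<0$, i.e.\ on $S_c^0$. Your reversor argument confirms this independently: $\mathcal R$ fixes $u$, and reversing the independent variable does not change rotation counts. So the second sentence of the proposition contradicts the first sentence combined with the symmetry $\Gamma_t^\delta=\mathcal R(\Gamma_f^\delta)$, which the paper itself uses in the proof of Lemma~\ref{lem-persistence-true+fauxcanards-RFS}.

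The paper reaches its assignment through a sign slip. After straightening $\Gamma_f^0$, the lower-left entry becomes $+\sqrt{\lambda}\,\tilde q_2$ precisely because now $u_2=+\tfrac12\sqrt{\lambda}\,\tilde q_2$. Yet the proof keeps the identification of $\{\tilde q_2<0\}$ with $S_s^0$, which is valid only along $\Gamma_t^0$.

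The rescue you sketch cannot work, for two reasons. First, twisting is governed by the fast linearization $\xi''=2u_2\xi$ of the full flow in $K_2$. That linearization does not see the desingularization \eqref{eq:desingularized-reduced}, and a reparametrization of the independent variable, even an orientation-reversing one, neither creates rotations nor moves them between sheets. Second, $\kappa_{21}$ is defined only for $u_2>0$, so the twisting branch $y_2\to-\infty$ of $\Gamma_f^0$ never enters chart $K_1$ at all.

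Adopt your fallback instead. The correct dichotomy is that near either canard, solutions can twist infinitely often only on $S_c^0$, where the fast equilibria are centers, and never on $S_s^0$. For $\Gamma_t^0$ this happens on its outgoing half ($y_2>0$); for $\Gamma_f^0$ it happens on its incoming half ($y_2<0$). The genuine contrast between the two canards is therefore outgoing versus incoming, not center sheet versus saddle sheet.
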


\begin{proof}
See Appendix~\ref{app:proptwist}.
\end{proof}

\subsection{The entry/exit chart \texorpdfstring{$K_1$ for $M_{\rm RFS}$}{Lg}} \label{sec:K1}

In the entry/exit chart $K_1$ where $u_1\equiv 1$, the coordinate transformation is
\begin{equation} \label{eq:K1-coords}
K_1: \, \, 
  u = r_1^2, \, \,  \,
  p = r_1^3 p_1, \, \,  \,
  v = r_1^4 v_1, \, \,  \, 
  q = r_1^2 q_1, \, \,  \,
  \delta=r_1^3 \delta_1.    
\end{equation}
We substitute this coordinate change into \eqref{eq:y-ODE} with $\delta_y=0$ appended, and we introduce $y_1$ defined by $\frac{d}{dy_1} = \frac{1}{r_1} \frac{d}{dy}$, to desingularize the system.
The governing equations in $K_1$ are
\begin{equation} \label{eq:K1}
\begin{split} 
    \frac{dr_1}{dy_1} &= \frac{1}{2} r_1 p_1, \\
    \frac{dp_1}{dy_1} &= v_1 + 1 - \frac{3}{2} p_1^2 + r_1^2 h_1(p_1,v_1,q_1,r_1), \\
    \frac{dv_1}{dy_1} &= -2p_1v_1 + \delta_1 q_1, \\
    \frac{dq_1}{dy_1} &= -p_1q_1 -\frac{1}{2}\lambda \delta_1 + r_1^2 \delta_1 k_1(p_1,v_1,q_1,r_1), \\
    \frac{d\delta_1}{dy_1} &= - \frac{3}{2} p_1 \delta_1,
\end{split}
\end{equation}
where $h_1= \alpha_1 v_1 + \alpha_2 + \mathcal{O}(r_1^2)$
and $k_1= \alpha_3 + r_1^2 (\alpha_4 v_1 + \alpha_5) + \mathcal{O}(r_1^4)$.

The hyperplane $\{ r_1 = 0 \}$ is invariant.
On it, there are two isolated equilibria
\begin{equation}\label{eq:Epm}
 E_{\pm} = \left\{ r_1=0, \, p_1 = \pm\rho, \, v_1=0, \, q_1=0, \, \delta_1=0 \right\},
\end{equation}
where $\rho=\sqrt{\tfrac{2}{3}}$. 
See Figure~\ref{fig:pvq-RFS}. 
The stable and unstable spectra at $E_{+}$ are 
\begin{equation*}
\sigma_s(E_{+}) = \{  -3\rho, -2\rho, 
-\tfrac{3}{2}\rho,
-\rho \} \quad
{\rm and} \quad 
\sigma_u(E_{+}) = \{ \tfrac{1}{2}\rho \}.
\end{equation*}
The associated eigenspaces are 
\begin{equation*}
\mathbb{E}^s(E_+)
    = {\rm span} \, \left\{ 
    \left[
      \begin{array}{c}
      0 \\ 1 \\ 0 \\ 0 \\ 0
      \end{array}
      \right],
       \left[
      \begin{array}{c}
      0 \\ 1 \\ \rho \\ 0 \\ 0
      \end{array}
      \right],
      \left[
      \begin{array}{c}
      0 \\ 0 \\ 0 \\ \lambda \\ \rho
      \end{array}
      \right],     
      \left[
      \begin{array}{c}
      0 \\ 0 \\ 0 \\ 1 \\ 0
      \end{array}
      \right]
      \right\}
      \quad {\rm and} \quad 
       \mathbb{E}^u(E_{+}) 
    = {\rm span} \, \left[
      \begin{array}{c}
      1 \\ 0 \\ 0 \\ 0 \\ 0
      \end{array}
      \right].
\end{equation*}
Hence, $E_{+}$ has a four-dimensional stable manifold and a one-dimensional unstable manifold.

The stable and unstable spectra at $E_{-}$ are
\begin{equation*} 
\sigma_s(E_-) = \{ -\tfrac{1}{2}\rho \} 
\quad {\rm and} \quad 
\sigma_u(E_-) = \{ \rho,   
\tfrac{3}{2}\rho,
2 \rho,
3 \rho \}.
\end{equation*}
The eigenspaces are
\begin{equation*}
    \mathbb{E}^s(E_{-}) 
    = {\rm span} \, \left[
      \begin{array}{c}
      1 \\ 0 \\ 0 \\ 0 \\ 0
      \end{array}
      \right]
      \quad {\rm and} \quad 
      \mathbb{E}^u(E_{-}) 
    = {\rm span} \, \left\{ 
    \left[
      \begin{array}{c}
      0 \\ 0 \\ 0 \\ 1 \\ 0
      \end{array}
      \right],
      \left[
      \begin{array}{c}
      0 \\ 0 \\ 0 \\ -\lambda \\ \rho
      \end{array}
      \right],
      \left[
      \begin{array}{c}
      0 \\ -1 \\ \rho  \\ 0 \\ 0
      \end{array}
      \right],
      \left[
      \begin{array}{c}
      0 \\ 1 \\ 0 \\ 0 \\ 0
      \end{array}
      \right]
      \right\}. 
\end{equation*}
Hence, $E_{-}$ has a one-dimensional stable manifold and a four-dimensional unstable manifold.
Furthermore, there is a heteroclinic connection from $E_{-}$ to $E_{+}$ that lies on the $p_1$-axis, where the system reduces to $\frac{d p_1}{dy_1}=1- \frac{3}{2} p_1^2$.

\begin{figure}[ht!]
   \centering
   \includegraphics[width=5in]{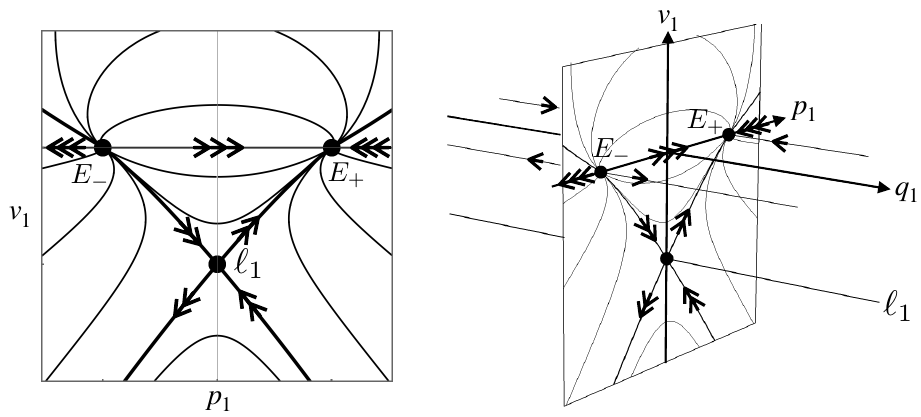}
   \caption{
   Dynamics of \eqref{eq:K1} in the invariant subspace $\{ r_1, \delta_1 = 0 \}$, which is coordinatized by $(p_1,v_1,q_1)$.
   The two hyperbolic equilibria $E_\pm$ \eqref{eq:Epm} and the line $\ell_1$  of fixed points \eqref{eq:ell1} emerge from the folded saddle $M_{\rm RFS}$ in the blowup transformation.
   }
   \label{fig:pvq-RFS}
\end{figure}

System \eqref{eq:K1} also possesses a line of fixed points, 
\begin{equation}\label{eq:ell1}
   \ell_1(q_1) = \{ r_1=0, \, p_1 = 0, \, v_1 = -1, \, q_1 \in \mathbb{R}, \, \delta_1=0\}.
\end{equation}
We sometimes refer to it as $\ell_1$.
For each $q_1 \in \mathbb{R}$, the linearization at $\ell(q_1)$ has point spectrum
\begin{equation*}
    \sigma_{\rm pt} (\ell_1(q_1)) = \{ \pm \sqrt{2}, 0, 0, 0 \}.
\end{equation*}
Hence, at each point on $\ell_1$, there are one-dimensional stable and unstable eigenspaces and a three-dimensional generalized center eigenspace,
\begin{equation*}
    \mathbb{E}^{s,u}(\ell_1(q_1)) = {\rm span} \, 
    \left[ \begin{array}{c} 
    0 \\ \pm \sqrt{2} \\ -2 \\ q_1 \\ 0 
    \end{array} \right], 
\quad 
    \mathbb{E}^{c} (\ell_1(q_1)) = {\rm span} \,
    \left\{ 
    \left[ \begin{array}{c} 
    1 \\ 0 \\ 0 \\ 0 \\ 0 
    \end{array} \right],
    \left[ \begin{array}{c} 
    0 \\ 0 \\ 0 \\ 1 \\ 0 
    \end{array} \right],  
    \left[ \begin{array}{c} 
    0 \\ -q_1 \\ 0 \\ 0 \\ 2 
    \end{array} \right]
    \right\}.
\end{equation*}
The third vector in $\mathbb{E}^c(\ell_1(q_1))$ is a generalized eigenvector. 
It becomes a regular eigenvector at two points, where $q_1=\pm\sqrt{\lambda}$, see also \eqref{eq:limitKappa21-Gammat} and \eqref{eq:limitKappa21-Gammaf}.
By center manifold theory \cite{C1981,GH1983,HPS1977}, each point $\ell_1(q_1)$ has a three-dimensional center manifold, $W^c(\ell_1)$.
Moreover, $W^c(\ell_1)$ contains the surface of fixed points
\begin{equation}\label{eq:tildeS}
    \tilde{\mathcal{S}} = \{ r_1\ge 0, \, \, p_1=0, \, \, v_1=-1+\mathcal{O}(r_1^2), \, \, q_1 \in \mathbb{R}, \, \, \delta_1=0 \}.
\end{equation}

\subsection{The transition map from \texorpdfstring{$K_2$ {\rm to} $K_1$}{Lg}} \label{sec:transitionmap-kappa21}

The transition map from $K_2$ to $K_1$ is given by
\begin{equation}\label{eq:kappa12}
\kappa_{21} (u_2,p_2,v_2,q_2,r_2) 
= (r_1,p_1,v_1,q_1,\delta_1)
= \left( 
r_2\sqrt{u_2}, \frac{p_2}{u_2^{3/2}}, \frac{v_2}{u_2^2}, \frac{q_2}{u_2}, \frac{1}{u_2^{3/2}}
\right) \, \, {\rm for} \, \, u_2>0.
\end{equation}
In this section, we use the map $\kappa_{21}$ to study the backward asymptotics of solutions on $\Gamma_t^0$ and the forward asymptotics of solutions on $\Gamma_f^0$ in $K_1$.  
We evaluate the following two limits
\begin{equation} \label{eq:limitKappa21-Gammat}
\begin{split} 
    {\bf p}_- &= \lim_{y_2 \to -\infty} \kappa_{21}(\Gamma_t^0) \\
    &= \lim_{y_2 \to -\infty} \left(
    0, \frac{-\frac{1}{2}\sqrt{\lambda}}{(-\frac{1}{2}\sqrt{\lambda} y_2)^{3/2}},
    \frac{-\frac{1}{4}\lambda y_2^2}{(-\frac{1}{2}\sqrt{\lambda} y_2)^2},
    \frac{-\frac{1}{2}\lambda y_2}{-\frac{1}{2}\sqrt{\lambda} y_2},
    \frac{1}{(-\frac{1}{2}\sqrt{\lambda} y_2)^{3/2}}
    \right) \\
    &=(0,0,-1,\sqrt{\lambda},0) \in \ell_1. 
\end{split}
\end{equation} 
and 
\begin{equation*}
    \lim_{y_2 \to -\infty} 
    \frac{ \frac{d}{dy_2} \kappa_{21}(\Gamma_t^0)} 
    { \Vert \frac{d}{dy_2} \kappa_{21}(\Gamma^0_t) \Vert}
       =\left(0,\frac{-\sqrt{\lambda}}{\sqrt{\lambda + 4}},0,0,\frac{2}{\sqrt{\lambda+4}}\right) \in \mathbb{E}^c(\ell_1). 
\end{equation*}
These calculations show that $\Gamma_t^0$ emanates from the point ${\bf p}_-$ on $\ell_1$ with $q_1=\sqrt{\lambda}$, {\it i.e.,} from the point $\ell_1(\sqrt{\lambda})$, and that the tangent vector lies in the subspace of $\mathbb{E}^c(\ell_1)$ spanned by the generalized eigenvector with $q_1=\sqrt{\lambda}$.
Therefore, $\Gamma^0_t$ lies on $W^{c}(\ell_1)$.

Similarly, $\Gamma_f^0$ terminates at the point ${\bf p}_+$ on $\ell_1$ with $q_1=-\sqrt{\lambda}$, and $\Gamma^0_f$ lies on $W^{c}(\ell_1)$.
This follows from observations about the limits of $\Gamma_f^0$ and the tangent vector.
In particular, we find
\begin{equation}\label{eq:limitKappa21-Gammaf}
\begin{split} 
    {\bf p}_+ &= \lim_{y_2 \to \infty} \kappa_{21}(\Gamma_f^0) \\
    &= \lim_{y_2 \to \infty} \left(
    0, \frac{\frac{1}{2}\sqrt{\lambda}}{(\frac{1}{2}\sqrt{\lambda} y_2)^{3/2}},
    \frac{-\frac{1}{4}\lambda y_2^2}{(\frac{1}{2}\sqrt{\lambda} y_2)^2},
    \frac{-\frac{1}{2}\lambda y_2}{\frac{1}{2}\sqrt{\lambda} y_2},
    \frac{1}{(\frac{1}{2}\sqrt{\lambda} y_2)^{3/2}}
    \right) \\
    &=(0,0,-1,-\sqrt{\lambda},0) \in \ell_1, 
    \end{split} 
\end{equation}
and 
\begin{equation*}
    \lim_{y_2 \to \infty} 
    \frac{ \frac{d}{dy_2} \kappa_{21}(\Gamma_f^0)}
    { \Vert \frac{d}{dy_2} \kappa_{21}(\Gamma_f^0) \Vert}
       =\left(0,\frac{-\sqrt{\lambda}}{\sqrt{\lambda + 4}},0,0,\frac{-2}{\sqrt{\lambda+4}}\right) \in \mathbb{E}^c(\ell_1). 
\end{equation*}

\subsection{The existence of maximal true and faux canards, 
\texorpdfstring{$\Gamma^\delta_t$ and $\Gamma^\delta_f$, for $0< \delta \ll 1$}{Lg}}
\label{sec:persistence-RFS}

In this section, we establish the persistence of the ($\delta=0$) singular canards as maximal canards for $0<\delta \ll 1$ and the persistence of the connection between the maximal true and faux canards.

\medskip

\begin{lemma} \label{lem-persistence-true+fauxcanards-RFS}
There exists a $\delta_0>0$ sufficiently small such that for each $0<\delta<\delta_0$ and any $\mathcal{O}(1)$ value of $\lambda>0$,
the singular true and faux canards $\Gamma_t^0$ and $\Gamma_f^0$ persist as maximal true and faux canards, labeled $\Gamma_t^\delta$ and $\Gamma_f^\delta$, of \eqref{eq:y-ODE}.
The maximal canards lie in the transverse intersection of slow invariant manifolds,
and the connection between $\Gamma_t^\delta$ and $\Gamma_f^\delta$ persists for $0 < \delta< \delta_0$.
\end{lemma}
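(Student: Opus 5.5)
The plan is the standard folded-saddle argument of Szmolyan--Wechselberger, adapted to the four-dimensional Hamiltonian-type layer structure and to the reversibility $\mathcal{R}$ in \eqref{eq:reversible}.

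\textbf{Extending the slow manifolds into the blow-up.} First I will work in chart $K_1$ \eqref{eq:K1}. The line $\ell_1$ \eqref{eq:ell1} has a three-dimensional center manifold $W^c(\ell_1)$. It contains $\tilde{\mathcal S}$ \eqref{eq:tildeS} on $\{\delta_1=0\}$, and it extends the saddle slow manifold $S^\delta_s$ \eqref{eq:S-delta} for $r_1>0$. I will choose it to be $\mathcal{R}$-invariant: in $K_1$, $\mathcal{R}$ acts as $(p_1,q_1)\to(-p_1,-q_1)$ with time reversal, and $\ell_1$ is mapped to itself. Because $\ell_1$ is normally hyperbolic of saddle type, with eigenvalues $\pm\sqrt2$, the center manifold is unique up to exponentially small terms in the relevant region. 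On each level set $r_1^3\delta_1=\delta$, $W^c(\ell_1)$ is the saddle slow manifold $S^\delta_s$.

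By \S\ref{sec:transitionmap-kappa21}, $\Gamma_t^0$ emanates from $\ell_1(\sqrt\lambda)$ tangent to $\mathbb E^c$. Hence $\Gamma_t^0\subset W^c(\ell_1)$, restricted to the branch whose flow is forward in $y_1$. Similarly, $\Gamma_f^0$ ends at $\ell_1(-\sqrt\lambda)$ and lies in the backward branch.

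Next I will transport these manifolds to chart $K_2$ with $\kappa_{21}^{-1}$. Their forward and backward flows there are denoted $\mathcal M_a$ and $\mathcal M_r$. Both are three-dimensional in the five-dimensional $(u_2,p_2,v_2,q_2,r_2)$ space, so on each invariant slice $\{r_2=\text{const}\}$ they are two-dimensional inside a four-dimensional space.

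\textbf{Transversality at $r_2=0$.} This is the key computation. I will linearize \eqref{eq:K2-on-r2=0} along $\Gamma_t^0$ and show that the tangent spaces of $\mathcal M_a\cap\{r_2=0\}$ and $\mathcal M_r\cap\{r_2=0\}$ intersect only in the span of $\dot\Gamma_t^0$. Because the $(v_2,q_2)$ equations decouple and are explicit, I will write $v_2=-\tfrac14\lambda y_2^2+\tilde v$ and $q_2=-\tfrac12\lambda y_2+\tilde q$. With $\tilde q=c_1$ and $\tilde v=c_1y_2+c_0$, this leaves the inhomogeneous Weber-type equation
\[
\ddot w-2u_2^0(y_2)w=c_1y_2+c_0 ,
\]
where $u_2^0=-\tfrac12\sqrt\lambda\, y_2$ and $w$ is the $u_2$ perturbation. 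The homogeneous solutions are Airy/parabolic-cylinder functions of $y_2$. Variations in $\mathcal M_a$ must grow at most algebraically as $y_2\to-\infty$, which is the saddle side, where one solution is exponentially growing. Variations in $\mathcal M_r$ must grow at most algebraically as $y_2\to+\infty$. But by Proposition~\ref{prop:twist}, the $y_2\to+\infty$ side is the oscillatory center side for $\Gamma_t^0$.

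There I will therefore use $\mathcal M_r$ defined via reversibility: $\mathcal M_r=\mathcal R\mathcal M_a$. Transversality then reduces to checking that a Melnikov-type integral, in the $c_1$ (equivalently $\lambda$-shift) direction, is nonzero. Alternatively, it reduces to checking that the unique bounded-on-the-left solution does not already satisfy the right-side condition. I will compute this with the Airy asymptotics, and expect it to be nonzero for all $\lambda>0$, in the spirit of Szmolyan--Wechselberger for folded saddles.

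By reversibility, the same computation gives transversality along $\Gamma_f^0=\mathcal R\Gamma_t^0$.

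\textbf{Persistence and the connection.} Transversality is open, so the intersections persist for small $r_2$, i.e.\ for $0<\delta<\delta_0$. This gives $\Gamma_t^\delta$ and $\Gamma_f^\delta$ as maximal canards lying in transverse intersections of slow manifolds.

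For the connection, I will use Proposition~\ref{prop:singularcanardsconnect}: the $u\le0$ branches form a homoclinic loop on $S_c^0$. I will argue that $\mathcal R$ maps $\Gamma_t^\delta$ to $\Gamma_f^\delta$. A reversible orbit crossing $\mathrm{Fix}(\mathcal R)=\{p=q=0\}$ transversely therefore yields the connection; on the singular loop this crossing occurs at $q=0$, $u<0$. Transversality of the crossing with $\mathrm{Fix}(\mathcal R)$ then persists by the implicit function theorem, after averaging over the fast center oscillations. The elliptic sheet is not normally hyperbolic, so here I would follow solutions within $\mathcal O(\delta)$ via an averaging/adiabatic-invariant argument over $\mathcal O(1)$ slow time.

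\textbf{Main obstacle.} I expect the main obstacle to be the transversality computation on the center side. The fast oscillations of Proposition~\ref{prop:twist} prevent a naive exponential-dichotomy argument. My first approach is to replace $\mathcal M_r$ by $\mathcal R\mathcal M_a$, so that only the saddle-side condition and reversibility are needed.
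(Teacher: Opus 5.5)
\textbf{There is a genuine gap, and it is in your key computation.}

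\textbf{The second manifold does not contain $\Gamma_t^0$ in $K_2$.} By \eqref{eq:gamma_TF}, $p_2\equiv-\tfrac12\sqrt\lambda$ on $\Gamma_t^0$ and $p_2\equiv+\tfrac12\sqrt\lambda$ on $\Gamma_f^0=\mathcal R\Gamma_t^0$, so the two singular canards are disjoint in $K_2$. They are joined only through the $\mathcal O(1)$ homoclinic loop on the elliptic sheet $S_c^0$ (Proposition~\ref{prop:singularcanardsconnect}), which lies outside $K_2$.
\begin{itemize}
\item Hence $\mathcal R\mathcal M_a$ contains $\Gamma_f^0$, not $\Gamma_t^0$. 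It acquires a tangent plane along $\Gamma_t^0$ only after being continued backward around that loop.
\item Reversibility transports information from $\Gamma_t^0$ to $\Gamma_f^0$, not back to $\Gamma_t^0$. It therefore does not supply the missing condition as $y_2\to+\infty$.
\item You are right that there is no dichotomy on the center side. But your Airy/Melnikov computation along $\Gamma_t^0$ then has no second manifold to be measured against.
\end{itemize}

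\textbf{``Transversality is open'' fails by your own dimension count.} Even with a suitable $\mathcal M_r\supset\Gamma_t^0$, each slice $\{r_2={\rm const}\}$ contains two two-dimensional invariant manifolds in a four-dimensional space sharing an orbit. Their tangent spaces therefore span at most three dimensions.
\begin{itemize}
\item Such an intersection is never transverse. A nondegenerate one still leaves a one-dimensional splitting direction, so it generically breaks for $r_2>0$.
\item A nonzero Melnikov derivative in a $\lambda$-type direction would give canards only along a curve $\lambda=\lambda^*(\delta)$. It would not give them for each fixed $\mathcal O(1)$ value $\lambda>0$, which is what the lemma asserts.
\end{itemize}

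\textbf{How the paper handles this step.} It sidesteps the center-side end entirely. It works with the concatenated singular orbit $\Gamma^0$: the saddle branch of $\Gamma_t^0$, then the loop on $S_c^0$, then the saddle branch of $\Gamma_f^0$. Asymptotic conditions are thus imposed only at the saddle-type endpoints $\mathbf p_\pm\in\ell_1$. The paper then argues that $d\Gamma^0/d\zeta$ is the only solution of the variational equation bounded at both ends, so $W^c(\mathbf p_-)$ and $W^c(\mathbf p_+)$ meet along $\Gamma^0$.

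\textbf{The reversibility step also lacks a nondegenerate starting point.} Reversibility, which you invoke only at the end, is what can make the connection robust for fixed $\lambda$. A point of $\mathcal M_a\cap\mathrm{Fix}(\mathcal R)$ is an intersection of two two-dimensional sets in $\mathbb R^4$, hence isolated and persistent when transverse. The orbit through it lies in $\mathcal M_a\cap\mathcal R\mathcal M_a$ automatically, so no transversality along $\Gamma_t^0$ would be needed. However, as you state it, this step has no nondegenerate $\delta=0$ problem for the implicit function theorem.
\begin{itemize}
\item Both $S_c^0$ and $\mathrm{Fix}(\mathcal R)=\{p=q=0\}$ lie in the hyperplane $\{p=0\}$. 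So the singular crossing of the loop with $\{q=0\}$ is transverse only within the sheet, i.e.\ in the $(u,q)$-plane.
\item Any $p$-component of $T\mathcal M_a$ at the crossing comes from the fast oscillations about the center sheet (the twisting of Proposition~\ref{prop:twist}). Their accumulated phase along the loop is of order $1/\delta$.
\item Controlling this is exactly what your unspecified ``averaging/adiabatic-invariant argument'' would have to accomplish. That is the missing ingredient.
\end{itemize}
For comparison, the paper obtains the connection differently. It uses the transverse crossing of the singular true and faux canards on $\{q=0\}$ at $M_{\rm RFS}$, together with $\Gamma_t^\delta=\mathcal R(\Gamma_f^\delta)$.
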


\begin{proof}
We begin by recalling that, in the phase plane of the $(u,q)$ reduced system \eqref{eq:reduced} and hence also in the vector field \eqref{eq:onS0} induced on $S_0$, the singular true and faux canards $\Gamma_t^0$ and $\Gamma_f^0$ of $M_{\rm RFS}$ are the stable and unstable manifolds of the saddle point $(u,q)=(0,0)$.
Moreover, by Proposition~\ref{prop:singularcanardsconnect}, we know that the branches of these manifolds in the half-plane $\{ u<0\}$ coincide in an orbit homoclinic to the saddle point $M_{\rm RFS}$.

Now, in the four-dimensional phase space of the spatial ODE system \eqref{eq:y-ODE}, we define the curve 
\begin{equation}\label{eq:Gamma}
\Gamma^0 = \Gamma_t^0 \cup \Gamma_f^0.
\end{equation}
Solutions on this singular orbit $\Gamma^0$ pass through the folded saddle $M_{\rm RFS}$ in finite time due to the l'Hopital zero over zero cancellation in the first component of the reduced vector field \eqref{eq:reduced} (recall the analysis in Section~\ref{sec:MRFS+MRFSNII}).
They may be parametrized by a real scalar variable, which we label as $\zeta$.
The limits \eqref{eq:limitKappa21-Gammat} and \eqref{eq:limitKappa21-Gammaf} for the singular canards and their tangent vectors imply that $\Gamma^0 \to {\bf p}_-$ as $\zeta \to -\infty$, that $\Gamma^0 \to {\bf p}_+$ as $\zeta \to \infty$, and that these solutions lie in the center manifold $W^c(\ell_1)$. 
Moreover, the segment of $\Gamma^0$ that lies in the half-space $\{ u < 0 \}$ is given exactly by the curve representing the homoclinic loop,
and without loss of generality we may parametrize that segment of $\Gamma^0(\zeta)$ by $\zeta \in [-L,L]$ for some $\mathcal{O}(1)$ number $L>0$, since the system is autonomous.

Next, we consider the equation of variation about $\Gamma^0$ in the full four-dimensional phase space. 
By definition, $\tfrac{d\Gamma^0}{d \zeta}$ is a solution of the variational equation.
Also, we know that, at ${\bf p}_\pm$, $W^c(\ell_1)$ is normally 
repelling in the limits $\zeta \to \pm \infty$, respectively.
Hence, $\tfrac{d\Gamma^0}{d\zeta}$ is the only solution
of the variational equation that is bounded in both limits $\zeta \to \pm \infty$;
all other solutions diverge from ${\bf p}_\pm$ in forward or backward time, respectively.
In turn, this implies that $W^c({\bf p}_-) \cap W^c({\bf p}_+)$ on $\{ \bar r = 0 \}$ along the one-dimensional curve $\Gamma^0$, and this intersection is transverse.

Having established that $W^c({\bf p}_-) \cap W^c({\bf p}_+)$
on the invariant set $\{ \bar r = 0 \}$, we now turn to analyze the dynamics and geometry of the invariant manifolds for small $\bar r > 0$. 
Regular perturbation theory for invariant manifolds implies that, in chart $K_2$, the transverse intersection $W^c({\bf p}_-) \cap W^c({\bf p}_+)$ persists for sufficiently small values of $r_2>0$.
Then, by tracking the manifolds into the entry/exit chart $K_1$, we see that the intersection remains transverse.
Furthermore, in chart $K_2$, the manifold $W^c({\bf p}_-) \cap W^c({\bf p}_+)$ is foliated by curves of $\{ r_2= {\rm constant} \}$, since $\tfrac{dr_2}{dy_2}=0$.
In turn, this foliation induces a foliation of the transverse intersection by $r_1 \delta_1^{1/3}={\rm constant}$ in $K_1$, since $r_1^3 \delta_1 = r_2^3 \delta_2$ and $\delta_2=1$ in $K_2$.
The orbits that lie in the intersection are exactly the persistent maximal true and faux canards, and they exist for sufficiently small $r_2>0$, and hence for sufficiently small $\delta>0$.

Lastly, to show that the connection between the true and faux canards persists, we use the Hamiltonian $H_d$ of the desingularized reduced system \eqref{eq:desingularized-reduced} in the $(u,q)$ plane, recall Remark~\ref{rem:Hamiltonian-desingularizedsystem}.
Let $h$ be the value of $H_d$ at 
$M_{\rm RFS}$.
The equation $H_d(u,q;\lambda)=h$ is a quadratic equation in $q$. 
The two roots represent the singular true and faux canards of $M_{\rm RFS}$.
For each $\mathcal{O}(1)$ value of $\lambda > 0$, a direct calculation of the slopes of the tangent lines to these solutions
at $M_{\rm RFS}$, where $q=0$, shows that the singular canards intersect transversely there.
Then, for all $0< \delta \ll 1$ and any $\mathcal{O}(1)$ value of $\lambda>0$, the intersection will persist somewhere along $\{ q = 0 \}$.
Lastly, for all $\delta \ge 0$ and $\lambda>0$, the reversibility symmetry ${\mathcal R}$ \eqref{eq:reversible} enforces $\Gamma_t^\delta = {\mathcal R}(\Gamma_f^\delta)$.
Hence, $\Gamma^\delta_t$ and $\Gamma^\delta_f$ must intersect $\{ q=0 \}$ at the same point, so that the connection persists.
\end{proof} 

\medskip
\noindent 
The maximal canards are $\mathcal{O}(\delta)$ close to their unperturbed counterparts for all finite $y_2$.
In addition, each maximal canard lies at the heart of a family of perturbed canards.
The members of these families are exponentially close to the corresponding maximal canard, with each member being close along an interval of $x$ values that is centered about the folded singularity but with each member being of different lengths in $x$. 
(The length of 
the interval of closeness may be thought of as the parameter.)
All persistent canards lie in the transverse intersections of $S_s^\delta$  with itself.

\section{Geometric desingularization of \texorpdfstring{$M_{\rm RFSN-II}$ }{Lg}}\label{sec:RFSNII-desing}

In this section, we present the geometric desingularization analysis of the RFSN-II point, $M_{\rm RFSN-II}$ \eqref{eq:M-RFSNII}.
We consider $0 \le \lambda\ll 1$ in the ODE system \eqref{eq:y-ODE}.
The blow-up transformation is
\begin{equation}\label{eq:blowup-RFSNII}
u = r^2 \bar{u}, \, \, \,
p = r^3 \bar{p}, \, \, \,
v = r^4 \bar{v}, \, \, \,
q= r^3 \bar{q}, \, \, \,
\delta = r^2 \bar{\delta}, \, \, \, 
\lambda=r^3 \bar{\lambda}.
\end{equation}
This transformation is a map
$\Phi: \mathbb{S}^5 \times [-r_0,r_0]  \to \mathbb{R}^6,$
where $\bar{u}^2 + \bar{p}^2 + \bar{v}^2 + \bar{q}^2 + \bar{\delta}^2 + \bar{\lambda}^2 = 1$ and $r_0>0$ is sufficiently small, independent of $\delta$.
The variables $u$ and $q$ scale with $r^2$ and $r^3$, respectively, because $M_{\rm RFSN-II}$ is a folded saddle-node at which the stable and unstable manifolds of $(u,q)=(0,0)$ are single-branched and meet in a cusp point with $q \sim u^{3/2}$ in the $(u,q)$ plane, and hence $q \sim (r^2)^{3/2}$ as $r \to 0$.  

We examine the dynamics induced by \eqref{eq:y-ODE} in two coordinate charts:
the central (or rescaling) chart $\hat{K}_2$ defined by $\{\bar{\delta}=1\}$
and the entry/exit chart $\hat{K}_1$ defined by $\{ \bar{u}=1 \}$.
We use $\hat{\square}_i$ to denote variable $\bar{\square}$ in chart $\hat K_i$ for i = 1, 2, where the hat differentiates the variables and labels used here from those in the desingularization analysis of $M_{\rm RFS}$ in Section~\ref{sec:geodesing}.

\subsection{The central chart \texorpdfstring{$\hat{K}_2$ for $M_{\rm RFSN-II}$}{Lg}} \label{sec:K2-RFSNII}
In the central chart $\hat{K}_2$ where $\hat{\delta}_2\equiv 1$, the blow-up transformation is
\begin{equation} \label{eq:K2-coords-RFSNII}
{\hat K}_2: \, \, \, 
  u = \hat{r}_2^2 \hat{u}_2, \, \, \,  \, \,
  p = \hat{r}_2^3 \hat{p}_2, \, \, \,  \, \, 
  v = \hat{r}_2^4 \hat{v}_2, \, \, \,  \,  \,
  q = \hat{r}_2^3 \hat{q}_2, \, \, \,  \, \, 
  \delta=\hat{r}_2^2, \, \, \, \, \, 
  \lambda=\hat{r}_2^3 \hat{\lambda}_2.
\end{equation}
We substitute these coordinates into system \eqref{eq:y-ODE} with $\delta_y=0$ and $\lambda_y= 0$ appended, and we introduce the independent variable $\hat{y}_2$ defined 
by $\frac{d}{d\hat{y}_2} = \frac{1}{\hat{r}_2} \frac{d}{dy}$.
The governing equations in $\hat{K}_2$ are
\begin{equation} \label{eq:K2-RFSNII}
\begin{split}
    \frac{d\hat{u}_2}{d\hat{y}_2} &= \hat{p}_2, \\ 
    \frac{d\hat{p}_2}{d\hat{y}_2} &= \hat{v}_2 + \hat{u}_2^2 + \hat{r}_2^2 \hat{h}_2(\hat{u}_2,\hat{v}_2,\hat{r}_2), \\
    \frac{d\hat{v}_2}{d\hat{y}_2} &= \hat{q}_2, \\
    \frac{d\hat{q}_2}{d\hat{y}_2} &= \alpha_3 \hat{u}_2  -\frac{1}{2}\hat{r}_2 \hat{\lambda}_2 + \hat{r}_2^2 \hat{m}_2(\hat{u}_2,\hat{v}_2,\hat{r}_2), \\
    \frac{d\hat{\lambda}_2}{d\hat{y}_2} &= 0 , \\
    \frac{d\hat{r}_2}{d\hat{y}_2} &= 0,
\end{split}
\end{equation}
where $\hat{h}_2= \alpha_1 \hat{u}_2 \hat{v}_2 + \alpha_2 \hat{u}_2^3 + \mathcal{O}(\hat{r}_2^3)$
and $\hat{m}_2=  \alpha_4 \hat{v}_2 + \alpha_5 \hat{u}_2^2 
+ \mathcal{O}(\hat{r}_2^2)$.

For each $\hat{r}_2>0$, the hyperplane $\{ \hat{r}_2 = {\rm constant} \}$ is invariant.
On $\{ \hat{r}_2=0 \}$, the vector field is 
\begin{equation} \label{eq:K2-on-r2=0-RFSNII}
\begin{split}
    \frac{d\hat{u}_2}{d\hat{y}_2} &= \hat{p}_2, \\ 
    \frac{d\hat{p}_2}{d\hat{y}_2} &= \hat{v}_2 + \hat{u}_2^2, \\
    \frac{d\hat{v}_2}{d\hat{y}_2} &= \hat{q}_2, \\
    \frac{d\hat{q}_2}{d\hat{y}_2} &= \alpha_3 \hat{u}_2.
\end{split}
\end{equation}
This system is known as the unperturbed problem.
It is Hamiltonian.
Let $\hat{q}_2 = - \alpha_3 \tilde{q}_2$ and define
${\bf u}_2 = (\hat{u}_2,\tilde{q}_2,\hat{p}_2,\hat{v}_2)$.
The system \eqref{eq:K2-on-r2=0-RFSNII} becomes
\begin{equation} \label{eq:K2-H2}
\dot{\bf u}_2=J {\bf \nabla}_{(\hat{u}_2,\tilde{q}_2,\hat{p}_2,\hat{v}_2)} H_2, 
\end{equation}
where $H_2=\tfrac{1}{2}(\hat{p}_2^2 + \alpha_3 \tilde{q}_2^2) - \hat{u}_2 \hat{v}_2 - \tfrac{1}{3}\hat{u}_2^3$ and 
\[ J = \begin{bmatrix} \mathbb O & \mathbb I \\ -\mathbb I & \mathbb O \end{bmatrix}, \]
where $\mathbb O$ and $\mathbb I$ denote the $2 \times 2$ zero and identity matrices, respectively.
We note that $H_2 \vert_{\Gamma_{20}} = 0.$

\begin{remark}
The Hamiltonian for \eqref{eq:K2-H2} exhibits a scaling invariance,
\begin{equation}\label{eq:scalinginvarH2}
    H_2(\nu^2 \hat{u}_2,\nu^3 \hat{p}_2,\nu^4 \hat{v}_2, \nu^3 \tilde{q}_2) = \nu^6 H_2(\hat{u}_2,\hat{p}_2,\hat{v}_2,\tilde{q}) \, \, \, {\rm for} \,\,  \nu \in \mathbb{R}.
\end{equation}
As a result, the system exhibits infinitely self-similar dynamics on the zero level set of $H_2$.
This scaling invariance and the attendant infinitely self-similar dynamics were first observed in the central charts for the Brusselator and van der Pol PDEs in \cite{JDKV2026,VDK2025}.
They have important consequences for the observed dynamics of spatially-periodic canards in the class of PDEs \eqref{eq:genAI-UV}.
\end{remark}

\subsubsection{A key algebraic solution}\label{subsubsec:Gamma20}

System \eqref{eq:K2-H2} has the following key algebraic solution:
\begin{equation} \label{eq:Gamma_20}
    \Gamma_{20} (\hat{y}_2) 
    = \left( 
    -\frac{\alpha_3}{12} \hat{y}_2^2, \, \,  
    -\frac{\alpha_3}{6} \hat{y}_2, \, \,  
    -\frac{\alpha_3^2}{144}\hat{y}_2^4 - \frac{\alpha_3}{6}, \, \,  
    \frac{\alpha_3}{36}\hat{y}_2^3
    \right),
\end{equation}
where we have reverted to listing the coordinates in the standard order $(\hat u_2, \hat p_2, \hat v_2, \tilde q_2)$.
The special solution $\Gamma_{20}$ can be split into two halves,
\begin{equation}
        \Gamma_0^+ = \{ \Gamma_{20}(\hat{y}_2) : \hat{y}_2 \ge 0 \} 
        \quad {\rm and} \quad 
          \Gamma_0^- = \{ \Gamma_{20}(\hat{y}_2) :  \hat{y}_2 \le  0\},
\end{equation}
with $\Gamma_{0}^+$ representing the true canard ($\Gamma_0^t$) and $\Gamma_0^-$ representing the faux canard ($\Gamma_0^f$) of $M_{\rm RFSN-II}$.
Solutions on $\Gamma_0^+$ approach $M_{\rm RFSN-II}$ as $\hat{y}_2 \to 0^+$,
and those on $\Gamma_0^-$ as $\hat{y}_2 \to 0^-$.
Both are cusp-shaped curves in the $(\hat{u}_2,\hat{q}_2)$ plane.

\begin{remark}
The algebraic solution $\Gamma_{20}$ may also be obtained in the limit of the true and faux canards, $\Gamma_t^0$ and $\Gamma_f^0$ \eqref{eq:gamma_TF}, of $M_{\rm RFS}$ as $\lambda\to 0^+$. 
The higher order terms in $r_2$ should be included to carry out this limiting analysis.
\end{remark}

\begin{remark}
In \eqref{eq:K2-coords-RFSNII}, we have scaled the parameter $\lambda$ with three powers of $\hat{r}_2$ to focus on the cusp singularity at $\hat{\lambda}_2=0$.
One may instead scale it with $\hat{r}_2^2$ to also unfold the cusp locally in the parameter.
In that case, the unperturbed system, which then includes the term $-\tfrac{1}{2}\hat{\lambda}_2$ in the fourth component, also has a special algebraic solution for all $\mathcal{O}(1)$ values of $\hat{\lambda}_2$, with $\hat{u}_2=-\tfrac{\alpha_3}{12}\hat{y}_2^2 + \tfrac{3\hat{\lambda}_2}{4\alpha_3}$.
Also, the corresponding Hamiltonian has the same scaling invariance, now with $\nu^3 \hat{\lambda}_2$ included.
\end{remark}

\subsubsection{The geometric role of the algebraic solution \texorpdfstring{$\Gamma_{20}$}{Lg} and the different types of heteroclinics on the hemisphere
}
\label{subsubsec:geomrole}

In this section, we describe the geometric role of the algebraic solution $\Gamma_{20}$. 
On the level set $H_2=0$, the system \eqref{eq:K2-H2} may be reduced to a third-order system by setting $\hat{v}_2 = \tfrac{1}{2\hat{u}_2} (\hat{p}_2^2 +\alpha_3 \tilde{q}_2^2) - \tfrac13 \hat{u}_2^2$.
Hence, after transforming the independent variable by $\hat{u}_2 \tfrac{d}{d\hat{y}_2}= \tfrac{d}{d\hat{\xi}_2}$ to desingularize the vector field, we find that the unperturbed system is 
\begin{equation}\label{eq:systemonH2zero}
    \begin{split}
        \frac{d\hat{u}_2}{d\hat{\xi}_2} &= \hat{u}_2 \hat{p}_2 \\
        \frac{d\hat{p}_2}{d\hat{\xi}_2} &= \tfrac12 (\hat{p}_2^2 + \alpha_3 \tilde{q}_2^2) + \tfrac23 \hat{u}_2^3 \\
        \frac{d\tilde{q}_2}{d\hat{\xi}_2} &= - \hat{u}_2^2.
    \end{split}
\end{equation}
It has the reversibility symmetry $(\hat{u}_2,\hat{p}_2, \tilde{q}_2,\hat{\xi}_2) \to (\hat{u}_2, -\hat{p}_2, -\tilde{q}_2, -\hat{\xi}_2)$.

This system possesses lines of equilibria
\begin{equation}
\label{eq:curvesL}
    \mathcal{L}_\pm = \left\{  \hat{u}_2 = 0,
\, \, \, 
\hat{p}_2 = \pm \sqrt{- \alpha_3}\, \tilde{q}_2  \right \}.
\end{equation}
At each point on $\mathcal{L}_+$, 
the spectrum is $\sigma_{s/u} = \{ \sqrt{-\alpha_3}\, \tilde{q}_2, \sqrt{-\alpha_3}\,\tilde{q}_2 \}$ and $\sigma_c= \{ 0 \}$, with eigenspaces
\[ 
\mathbb{E}^{s/u}(\mathcal L_+) = {\rm span} \, \left\{ \begin{bmatrix} 1 \\ 0 \\ 0 \end{bmatrix}, \begin{bmatrix} 0 \\ 1 \\ 0 \end{bmatrix} \right\} \quad \text{ and } \quad
\mathbb{E}^c (\mathcal L_+) = {\rm span} \, \begin{bmatrix} 0 \\ \sqrt{-\alpha_3} \\ 1 \end{bmatrix}. \]
Similarly, at each point on $\mathcal{L}_-$,
the spectrum is $\sigma_{s/u} = \{ -\sqrt{-\alpha_3}\, \tilde{q}_2, -\sqrt{-\alpha_3} \, \tilde{q}_2 \}$ and $\sigma_c= \{ 0 \}$, with eigenspaces
\[ 
\mathbb{E}^{s/u}(\mathcal L_-) = {\rm span} \, \left\{ \begin{bmatrix} 1 \\ 0 \\ 0 \end{bmatrix}, \begin{bmatrix} 0 \\ 1 \\ 0 \end{bmatrix} \right\}
\quad \text{ and } \quad
\mathbb{E}^c(\mathcal L_-) = {\rm span} \, \begin{bmatrix} 0 \\ \sqrt{-\alpha_3} \\ - 1 \end{bmatrix}.
\]
Hence, in the $(\hat{u}_2,\hat{p}_2)$ plane, the lines of fixed points are improper nodes,
with unstable halves 
\[
\mathcal{L}_+^u = \{ \mathcal{L}_+ :  \, \, \tilde{q}_2 > 0 \} 
\quad \text{ and } \quad
\mathcal{L}_-^u = \{ \mathcal{L}_- :  \, \, \tilde{q}_2 < 0 \},
\]
and stable halves
\[ 
\mathcal{L}_+^s = \{ \mathcal{L}_+ :  \, \, \tilde{q}_2 < 0 \} \quad \text{ and } \quad
\mathcal{L}_-^s = \{ \mathcal{L}_- :  \, \, \tilde{q}_2 > 0 \}.
\]

\begin{proposition}\label{prop:roleofGamma20}
Restricted to $\{ \bar{u}_2  \ge 0 \}$, system \eqref{eq:systemonH2zero} possesses three classes of heteroclinics:
\begin{itemize}
\setlength{\itemsep}{0pt}
    \item Class 1: solutions emanate from a point on $\mathcal{L}_+^u$ and terminate at a point on $\mathcal{L}_+^s$;
     \item Class 2: solutions emanate from a point on $\mathcal{L}_+^u$ and terminate at a point on $\mathcal{L}_-^s$;
     \item Class 3: solutions emanate from a point on $\mathcal{L}_-^u$ and terminate at a point on $\mathcal{L}_+^s$.
     \end{itemize}
     \noindent 
In addition, the stable manifold, $W^s(\Gamma_{0}^+)$, of $\Gamma_{0}^+$ is the phase-space boundary that divides between class 1 and class 2 heteroclinics.
The unstable manifold, $W^u(\Gamma_{0}^-)$, of $\Gamma_{0}^-$ is the phase-space boundary that divides between class 1 and class 3 heteroclinics. 
\end{proposition}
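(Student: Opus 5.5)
The plan is to exploit the structure of \eqref{eq:systemonH2zero} on $\{\hat u_2\ge 0\}$. First I would note that $\{\hat u_2=0\}$ is invariant, and that on it the flow is $\hat p_2' = \tfrac12(\hat p_2^2+\alpha_3\tilde q_2^2)$, $\tilde q_2'=0$. Since $\alpha_3<0$, the equilibria on this plane are exactly $\mathcal L_\pm$, and the flow on the plane moves monotonically in $\hat p_2$ between the two lines.

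Next I would find a monotone quantity in the interior $\{\hat u_2>0\}$. There $\tilde q_2' = -\hat u_2^2<0$, so $\tilde q_2$ decreases strictly along every orbit. I would combine this with the scaling invariance \eqref{eq:scalinginvarH2}: in the reduced variables it becomes $(\hat u_2,\hat p_2,\tilde q_2)\mapsto(\nu^2\hat u_2,\nu^3\hat p_2,\nu^3\tilde q_2)$. Using it, I would project onto a compact cross-section, for instance the ratios $P=\hat p_2/|\tilde q_2|$ and $U=\hat u_2/|\tilde q_2|^{2/3}$. This should show that every bounded-in-ratio orbit has $\alpha$- and $\omega$-limits contained in $\{\hat u_2=0\}$. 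Because $\tilde q_2$ decreases, those limits must lie in the unstable halves $\mathcal L^u_\pm$ (where $\hat u_2$ grows) and the stable halves $\mathcal L^s_\pm$, respectively.

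The sign conditions then enumerate the admissible pairs. A heteroclinic leaving at $\tilde q_2=q_0$ and arriving at $\tilde q_2=q_1<q_0$ has the following options:
\begin{itemize}
\item leaving $\mathcal L_+^u$ requires $q_0>0$, and leaving $\mathcal L_-^u$ requires $q_0<0$;
\item arriving at $\mathcal L_+^s$ requires $q_1<0$, and arriving at $\mathcal L_-^s$ requires $q_1>0$.
\end{itemize}
Combining these with $q_1<q_0$, the pair $\mathcal L_-^u\to\mathcal L_-^s$ is impossible, since it would need $q_0<0<q_1$. That leaves exactly Classes 1 to 3.

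Existence of each class would follow from the two-dimensional unstable manifolds of points on $\mathcal L^u_+$. Each point carries the double eigenvalue, which gives a fan of orbits parametrized by the direction in the $(\hat u_2,\hat p_2)$ plane, and reversibility maps these fans to the stable fans of $\mathcal L^s$.

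The separatrix statement is the step I expect to be the main obstacle. Here I would verify that $\Gamma_{20}$ in these coordinates leaves $\{\hat u_2=0\}$ at the origin, the corner where $\mathcal L_+$ and $\mathcal L_-$ meet, with $\hat u_2\sim \hat y_2^2$ and $\tilde q_2\sim \hat y_2^3$. This makes $\Gamma_0^+$ a degenerate orbit ending at the non-hyperbolic point $\tilde q_2=0$. Its stable manifold $W^s(\Gamma_0^+)$ is two-dimensional, being a union of orbits asymptotic to it under the scaling family, so it has codimension one in the three-dimensional phase space. I would then argue by continuity of the $\omega$-limit. Orbits on one side of $W^s(\Gamma_0^+)$ pass by the origin with $\tilde q_2$ crossing to negative values and land on $\mathcal L_+^s$ (Class 1). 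Orbits on the other side remain with $\tilde q_2>0$ and land on $\mathcal L_-^s$ (Class 2).

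To make this rigorous I would blow up the origin using the weights of the scaling symmetry, which are exactly the $(2,3,3)$ weights. In the blown-up sphere chart, $\Gamma_0^+$ becomes a hyperbolic saddle-type connection, and the dichotomy becomes a standard saddle-passage argument. The claim for $W^u(\Gamma_0^-)$, which separates Classes 1 and 3, then follows immediately by applying the reversibility $(\hat u_2,\hat p_2,\tilde q_2,\hat\xi_2)\to(\hat u_2,-\hat p_2,-\tilde q_2,-\hat\xi_2)$. This symmetry swaps $\mathcal L_+^u$ with $\mathcal L_+^s$, swaps $\mathcal L_-^s$ with $\mathcal L_-^u$, and swaps $\Gamma_0^+$ with $\Gamma_0^-$.
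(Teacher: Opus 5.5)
Your plan depends on a symmetry that \eqref{eq:systemonH2zero} does not have. The map $(\hat u_2,\hat p_2,\tilde q_2)\mapsto(\nu^2\hat u_2,\nu^3\hat p_2,\nu^3\tilde q_2)$ is not a symmetry of the flow. The equation $\hat u_2'=\hat u_2\hat p_2$ forces $\hat\xi_2\mapsto\nu^{-3}\hat\xi_2$. The $\hat p_2$-equation is then consistent, with everything scaling like $\nu^6$, but $\tilde q_2'$ scales like $\nu^6$ while $-\hat u_2^2$ scales like $\nu^4$. The quasi-homogeneity \eqref{eq:scalinginvarH2} is a property of $H_2$ only. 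It does not pass to the Hamiltonian vector field, because the conjugate pairs $(\hat u_2,\hat p_2)$ and $(\tilde q_2,\hat v_2)$ carry total weights $5$ and $7$.

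This has three consequences:
\begin{itemize}
\item The ratios $U,P$ carry no autonomous reduced flow.
\item The ``scaling family'' of $\Gamma_0^+$ is not a family of orbits, so it does not produce a two-dimensional invariant separatrix.
\item The $(2,3,3)$-weighted blow-up does not desingularize anything. For those weights the lowest-order part of the field is $-\hat u_2^2\,\partial_{\tilde q_2}$ alone, and every other term is two weighted orders higher. The leading flow on the blown-up sphere therefore vanishes on the whole equator $\{\bar u=0\}$, which becomes a circle of non-hyperbolic equilibria. No hyperbolic saddle appears, so your saddle-passage argument has nothing to act on.
\end{itemize}

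Separately, your existence step does not yet prove anything. Knowing that points of $\mathcal L_+^u$ carry two-dimensional unstable fans and points of $\mathcal L^s_\pm$ carry stable fans says nothing about whether these fans meet. Many orbits of \eqref{eq:systemonH2zero} leave every compact set in finite $\hat\xi_2$-time ($\Gamma_{20}$ itself does), so a genuine connection argument is needed.

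The paper's route repairs both gaps. The quadratic part $\bigl(\hat u_2\hat p_2,\tfrac12(\hat p_2^2+\alpha_3\tilde q_2^2),-\hat u_2^2\bigr)$ is homogeneous and $\tfrac23\hat u_2^3$ is higher order, so one uses the homogeneous blow-up \eqref{eq:appDdesing}. On $\{\rho_{21}=0\}$ in chart $K_{21}$, which covers the whole open hemisphere, \eqref{eq:chartK21} decouples into the Riccati equations $\dot z_{21}=-z_{21}^2+\tfrac12\sqrt{-\alpha_3}$ and $\dot w_{21}=-w_{21}^2-\tfrac12\sqrt{-\alpha_3}$. Since $w_{21}$ runs from $+\infty$ to $-\infty$ in finite time, every orbit joins two equator points. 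The endpoints are read off from which of $z_{21},w_{21}$ blows up first:
\begin{itemize}
\item if $w_{21}$ blows up first in both time directions, the orbit is Class 1;
\item if $z_{21}\to-\infty$ first in forward time, it is Class 2;
\item if $z_{21}\to+\infty$ first in backward time, it is Class 3.
\end{itemize}
Since $z_{21}$ can blow up in at most one time direction, the fourth combination is excluded. This is the sphere version of your $\tilde q_2$-monotonicity argument, which is correct and gives a nice explicit reason for the exclusion.

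The borderline orbits are those along which $z_{21}$ and $w_{21}$ blow up simultaneously; they end at $(0,-1,0)$ or start at $(0,1,0)$. By \eqref{eq:K22}, $(0,1,0)$ is a hyperbolic saddle of the blown-up field, with eigenvalues $\tfrac12,-\tfrac12$ along the sphere and $\tfrac12$ radially; by reversibility all signs flip at $(0,-1,0)$. The half of $\Gamma_{20}$ with $\hat p_2>0$ lies in the two-dimensional unstable manifold of $(0,1,0)$ (indeed $\Gamma_{22}$ emanates from the origin of $K_{22}$). The half with $\hat p_2<0$ lies in the two-dimensional stable manifold of $(0,-1,0)$. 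These two surfaces are the separatrices your scaling family was meant to supply.

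With them in place, your saddle-passage picture and your reversibility step go through as you describe. Your limit-set claim can also be kept for bounded orbits, obtained directly from the monotonicity of $\tilde q_2$ without any ratios.
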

\noindent 
This proposition is proven in Appendix~\ref{app:secondgeodesing}, and it is illustrated in Figure~\ref{fig:secondblowup}. 
The proof follows closely that of Proposition 6.2 in \cite{VDK2025}, as given in Appendix B of \cite{VDK2025}, where the existence of three similar classes of heteroclinics is established in the rescaling chart of the analysis of Turing canards in the van der Pol PDE, which is a special case of the general PDEs \eqref{eq:genAI-UV}.

\begin{figure}[ht!]
    \centering
    \includegraphics[width=3.5in]{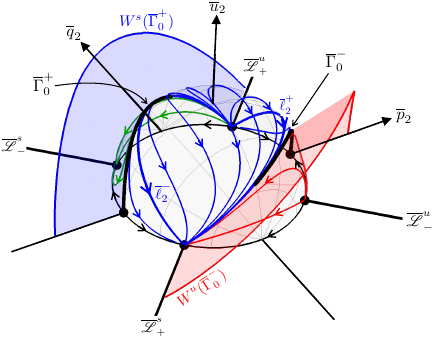}
    \caption{The three classes of heteroclinics of system \eqref{eq:systemonH2zero}. The nilpotent equilibrium at the intersection of $\mathcal L_{+}$ and $\mathcal L_-$ is blown-up to a hemisphere $\mathbb S_2 = \left\{ (\overline u_2,\overline p_2, \overline q_2): \overline u_2^2+\overline p_2^2+\overline q_2^2 = 1 \right\}$ by the mapping $(u_2,p_2,q_2) \mapsto (\rho_2 \overline u_2, \rho_2 \overline p_2, \rho_2 \overline q_2)$, where $\rho_2$ measures the radial distance from the surface of $\mathbb S_2$. The dynamics on the hemisphere are then carefully studied in the half-space $\left\{ (\overline u_2,\overline p_2, \overline q_2): \overline u_2 \geq 0 \right\}$ 
    (see Appendix~\eqref{eq:appDdesing}.)
    The lines of equilibria $\overline{\mathcal{L}}^u_{\pm}$ intersect the hemisphere on the equator $\{ (\overline u_2, \overline p_2, \overline q_2) \in \mathbb S_2 : \overline{u}_2 = 0\}$ at unstable nodes. 
    Similarly, the lines $\overline{\mathcal{L}}^s_{\pm}$ intersect the hemisphere on the equator at stable nodes. 
    The class 1, 2, and 3 heteroclinics are shown as blue, green, and red trajectories, respectively, on $\mathbb S_2$. 
    The pre-image $\overline{\Gamma}_0^+$ of the algebraic solution $\Gamma_0^t$ is the heteroclinic orbit that joins the fixed point $\overline{\mathcal{L}}_+^u \cap \mathbb S_2$ to the fixed point $(0,-1,0)$. 
    Similarly, the pre-image $\overline{\Gamma}_0^-$ of the algebraic solution $\Gamma_0^f$ is the heteroclinic orbit that joins the fixed point $(0,1,0)$ to the fixed point $\overline{\mathcal{L}}_+^s \cap \mathbb S_2$. 
    Moreover, the stable manifold $W^s(\overline{\Gamma}_0^+)$ (resp. unstable manifold $W^u(\overline{\Gamma}_0^+)$) of the fixed point $(0,-1,0)$ (resp. $(0,1,0)$) is the separatrix that divides the class 1 heteroclinics from the class 2 heteroclinics (resp. the class 1 from the class 3 heteroclinics).}
    \label{fig:secondblowup}
\end{figure}

\subsection{The entry/exit chart for \texorpdfstring{$M_{\rm RFSN-II}$}{Lg}}
\label{sec:K1-RFSNII}
In the entry/exit chart $\hat{K}_1$ where $\hat{u}_1\equiv 1$, the blow-up transformation is
\begin{equation} \label{eq:K1-coords-RFSNII}
\hat{K}_1: \, \, 
  u = \hat{r}_1^2, \, \, \, \, \, 
  p = \hat{r}_1^3 \hat{p}_1, \, \, \, \,  \,
  v = \hat{r}_1^4 \hat{v}_1, \, \, \, \,  \, 
  q = \hat{r}_1^3 \hat{q}_1, \, \,  \, \, \, 
  \delta=\hat{r}_1^2 \hat{\delta}_1, \, \, \, \, \,
  \lambda=\hat{r}_1^3 \hat{\lambda}_1.
\end{equation}
We substitute this coordinate change into \eqref{eq:y-ODE} with $\delta_y=0$ and $\lambda_y=0$ appended, and we 
introduce the variable $\hat{y}_1$ defined 
by $\frac{d}{d\hat{y}_1} = \frac{1}{\hat{r}_1} \frac{d}{dy}$.
The governing equations in $\hat{K}_1$ are
\begin{equation} \label{eq:K1-RFSNII}
\begin{split}
    \frac{d\hat{r}_1}{d\hat{y}_1} &= \tfrac{1}{2} \hat{r}_1 \hat{p}_1, \\ 
    \frac{d\hat{p}_1}{d\hat{y}_1} &= \hat{v}_1 + 1 - \tfrac{3}{2} \hat{p}_1^2 + \hat{r}_1^2 \hat{h}_1(\hat{v}_1,\hat{r}_1), \\
    \frac{d\hat{v}_1}{d\hat{y}_1} &= - 2 \hat{p}_1 \hat{v}_1 + \hat{\delta}_1 \hat{q}_1 , \\
    \frac{d\hat{q}_1}{d\hat{y}_1} &= -\tfrac{3}{2} \hat{p}_1 \hat{q}_1 + \hat{\delta}_1 \left( \alpha_3 -\tfrac{1}{2}\hat{r}_1\hat{\lambda}_1 + \hat{r}_1^2 \hat{m}_1(\hat{v}_1,\hat{r}_1)\right), \\
    \frac{d\hat{\delta}_1}{d\hat{y}_1} &= - \hat{\delta}_1 \hat{p}_1, \\
    \frac{d\hat{\lambda}_1}{d\hat{y}_1} &= -\tfrac{3}{2}\hat{\lambda}_1 \hat{p}_1,
\end{split}
\end{equation}
where $\hat{h}_1= \alpha_1 \hat{v}_1 + \alpha_2 + \mathcal{O}(\hat{r}_1^2)$
and $\hat{m}_1=  \alpha_4 \hat{v}_1 + \alpha_5 + \mathcal{O}(\hat{r}_1^2)$.
We establish

\begin{lemma}\label{lem-K1}
System \eqref{eq:K1-RFSNII} possesses
\begin{enumerate}
    \item A pair of isolated, symmetric equilibria, 
\begin{align}\label{eq:def-Epm-RFSNII}
    E_{\pm} = \left\{ \hat{r}_{1}=0,\, \, \, \hat{p}_{1}=\pm \rho, \, \, \, 
    \hat{v}_{1}=0,\, \, \hat{q}_{1}=0, \, \, \, \hat{\delta}_{1}=0, \, \, \,
    \hat{\lambda}_{1}=0 \right\},
\end{align}
where $\rho = \sqrt{\tfrac{2}{3}}$.
These equilibria lie on the invariant line  
\begin{align}\label{eq:def-I}
    I=&\left\{\hat{r}_{1}=0,\, \, \, \hat{p}_{1}\in\R, \, \, \, 
    \hat{v}_{1}=0, \, \, \, \hat{q}_{1}=0, \, \, \, \hat{\delta}_{1}=0,
    \, \, \, \hat{\lambda}_{1}=0\right\}.
\end{align}
The stable manifold of $E_{+}$ is five-dimensional, and its unstable manifold is one-dimensional. 
The stable manifold of $E_{-}$ is one-dimensional, and its unstable manifold is five-dimensional.
    \item A three-dimensional manifold of equilibria, 
    \begin{equation}\label{eq:def-M1}
        \mathcal{M}_{1}
        =\Big\{\hat{r}_{1}\ge 0, \, \, \, 
        \hat{p}_{1}=0, \, \, \, 
        \hat{v}_{1}=-1+(\alpha_1-\alpha_2) \hat r_1^2 + \mathcal O(\hat r_1^4), \, \, \, 
        \hat{q}_{1}\in\mathbb{R}, \, \, \, 
        \hat{\delta}_{1}=0, \, \, \, 
        \hat{\lambda}_{1}\in\mathbb{R}
        \Big\}.
    \end{equation}
    This manifold corresponds to $S_0^s$, the saddle sheet of the critical manifold on $u>0$, expressed in chart $\hat{K}_1$.
    Over each point in $\mathcal{M}_{1}$, there exist one-dimensional stable and unstable fibers.
    There is also a one-dimensional neutrally stable fiber containing sub-exponential dynamics.
    Also, $\mathcal{M}_1$ contains the line of saddle equilibria
\begin{align}\label{eq:def-ell1}
    \ell_{1}=&\left\{\hat{r}_{1}=0, \, \, \hat{p}_{1}=0, \, \,
    \hat{v}_{1}=-1, \, \, \hat{q}_{1}\in\R, \, \, 
    \hat{\delta}_{1}=0, \, \, \hat{\lambda}_{1}=0\right\}.
\end{align}
\end{enumerate}
\end{lemma}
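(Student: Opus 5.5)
The plan is a direct computation on system \eqref{eq:K1-RFSNII}, split into its invariant subspaces, mirroring the analysis of \eqref{eq:K1} for $M_{\rm RFS}$ in Section~\ref{sec:K1}.

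\textbf{Part 1.} First, observe that the set $\{\hat r_1=\hat v_1=\hat q_1=\hat\delta_1=\hat\lambda_1=0\}$ is invariant. Indeed, the equations for $\hat r_1,\hat\delta_1,\hat\lambda_1$ are each proportional to their own variable. On $\{\hat\delta_1=0\}$, the $\hat v_1$ and $\hat q_1$ equations reduce to $-2\hat p_1\hat v_1$ and $-\tfrac32\hat p_1\hat q_1$, which vanish at $\hat v_1=\hat q_1=0$. On the line $I$, the flow is therefore $\hat p_1' = 1-\tfrac32\hat p_1^2$, whose zeros are $\hat p_1=\pm\rho$. This gives $E_\pm$ and the heteroclinic from $E_-$ to $E_+$ along $I$.

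Next, compute the Jacobian at $E_\pm$. Since $\hat r_1=\hat\delta_1=\hat\lambda_1=0$, the terms with $\hat r_1^2\hat h_1$, $\hat\delta_1\hat q_1$ and $\hat\delta_1(\cdots)$ contribute only off-diagonal entries in the $\hat\delta_1$ column. The matrix is therefore upper triangular after a suitable ordering. Its diagonal entries are
\[
\hat r_1:\ \tfrac12\hat p_1,\quad \hat p_1:\ -3\hat p_1,\quad \hat v_1:\ -2\hat p_1,\quad \hat q_1:\ -\tfrac32\hat p_1,\quad \hat\delta_1:\ -\hat p_1,\quad \hat\lambda_1:\ -\tfrac32\hat p_1 .
\]
At $\hat p_1=\rho$, this gives one positive eigenvalue ($\tfrac12\rho$) and five negative ones. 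At $\hat p_1=-\rho$, the signs flip. Both points are hyperbolic, so the stable manifold theorem yields the stated dimensions ($5$ and $1$ at $E_+$; $1$ and $5$ at $E_-$). The symmetry between $E_+$ and $E_-$ comes from the reversibility \eqref{eq:reversible} expressed in $\hat K_1$, namely $(\hat p_1,\hat q_1,\hat y_1)\to(-\hat p_1,-\hat q_1,-\hat y_1)$, which maps one equilibrium to the other.

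\textbf{Part 2.} Set $\hat\delta_1=0$ and $\hat p_1=0$. Then $\hat r_1'=0$, $\hat v_1'=0$, $\hat q_1'=0$ and $\hat\lambda_1'=0$, so equilibria are exactly the solutions of
\[
\hat v_1+1+\hat r_1^2\hat h_1(\hat v_1,\hat r_1)=0 .
\]
This is the blown-up form of $v=v_0(u)$ with $u=\hat r_1^2$ and $v=\hat r_1^4\hat v_1$. Solving by the implicit function theorem near $\hat v_1=-1$, or directly from \eqref{eq:v0} by dividing by $\hat r_1^4$, gives $\hat v_1=-1+(\alpha_1-\alpha_2)\hat r_1^2+\mathcal O(\hat r_1^4)$. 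Together with free $\hat q_1$ and $\hat\lambda_1$, this yields the three-dimensional manifold $\mathcal M_1$. It is $S^0_s$ in these coordinates because $u=\hat r_1^2>0$ and $\delta=0$.

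Now linearize at a point of $\mathcal M_1$. The $(\hat p_1,\hat v_1)$ block together with the $\hat\delta_1$ direction must be examined.
\begin{itemize}
\item The block $\begin{bmatrix}0 & 1+\mathcal O(\hat r_1^2)\\ -2\hat v_1 & 0\end{bmatrix}$ fails to capture the fast dynamics correctly, because $\hat v_1$ is constrained. It is cleaner to use the fast coordinates: in the layer problem \eqref{layer} rescaled by $\hat r_1$, the saddle eigenvalues are $\pm\sqrt{2u}/\hat r_1=\pm\sqrt2(1+\mathcal O(\hat r_1^2))$.
\item Hence there are one stable and one unstable fiber, with eigenvalues $\pm\sqrt2+\mathcal O(\hat r_1^2)$. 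This matches $\sigma_{\rm pt}(\ell_1)$ found in Section~\ref{sec:K1}.
\item The three tangent directions of $\mathcal M_1$ give zero eigenvalues.
\item The $\hat\delta_1$ direction gives the sixth eigenvalue, $-\hat p_1=0$. It is a neutral direction with a nilpotent (generalized-eigenvector) coupling into $\hat v_1$ and $\hat q_1$ through $\hat\delta_1\hat q_1$ and $\hat\delta_1\alpha_3$. Along it the dynamics are sub-exponential (algebraic), as in the generalized eigenvector computed for \eqref{eq:K1}.
\end{itemize}
Restricting to $\hat r_1=0$ gives the line $\ell_1$, with $\hat v_1=-1$ and saddle eigenvalues $\pm\sqrt2$.

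The main obstacle is the neutral fiber. I would show that the $\hat\delta_1$ eigenvalue is exactly zero with a nontrivial Jordan block, which justifies the phrase ``sub-exponential dynamics''. This needs the explicit eigenvector $(0,-\hat q_1\cdot,\ldots,2,\ldots)$ analogous to $\mathbb E^c(\ell_1)$ in Section~\ref{sec:K1}. Everything else is routine triangular-matrix bookkeeping.
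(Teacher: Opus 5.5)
Your proposal is correct and follows essentially the paper's route: locate the equilibria directly, read the spectra at $E_\pm$ off the triangular Jacobian and along $\mathcal M_1$, then invoke standard invariant manifold theory. The reversibility $(\hat p_1,\hat q_1,\hat y_1)\to(-\hat p_1,-\hat q_1,-\hat y_1)$ is a nice shortcut for $E_-$, and your layer-problem argument is a valid substitute for the paper's direct linearization on $\mathcal M_1$. That direct computation does not ``fail'', though. Its characteristic polynomial factors as $\sigma^4\bigl(\sigma^2-2-(3\alpha_2-\alpha_1)\hat r_1^2+\cdots\bigr)$. The $(\hat p_1,\hat v_1)$ block is exact on $\ell_1$ and misses only an $\mathcal O(\hat r_1^2)$ correction from the $\hat r_1$--$\hat p_1$ coupling.

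For the neutral fiber only the zero eigenvalue is needed, and you should not try to prove a nontrivial Jordan block everywhere. On $\ell_1$ the Jacobian sends $(0,-\hat q_1,0,0,2,0)^T$ to $\bigl(0,0,0,\tfrac32\hat q_1^2+2\alpha_3,0,0\bigr)^T$. So this generalized eigenvector becomes a genuine eigenvector exactly at ${\bf p}_\pm$, as the paper notes.
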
 

\noindent 
Lemma~\ref{lem-K1} shows that the blow-up transformation \eqref{eq:K1-coords-RFSNII} splits the RFSN-II point into a pair of hyperbolic fixed points, $E_{\pm}$, and a line $\ell_1$ of saddles.
The slow manifold is transformed to the three-dimensional manifold $\mathcal M_1$ of fixed points (with $\ell_1$ corresponding to the saddle sheet $S_s^0$).

\bigskip
\begin{proof} 
The equilibria of \eqref{eq:K1-RFSNII} are obtained by direct calculation. 
We find five distinct sets of equilibria: 
a pair of isolated points $E_{\pm}$ \eqref{eq:def-Epm-RFSNII}, a line $\ell_1$ \eqref{eq:def-ell1}, a two-dimensional surface $\mathcal S_1$ \eqref{eq:def-S1}, and a three-dimensional manifold $\mathcal M_1$ \eqref{eq:def-M1}. 
Below, we list their spectra and linear subspaces.

At the hyperbolic saddle $E_+$,
the stable and unstable spectra are 
\[ \sigma_s(E_+) = \left\{ -3\rho,-2\rho,-\tfrac{3}{2}\rho,-\tfrac{3}{2}\rho,-\rho \right\} \quad  \text{ and } \quad \sigma_u(E_+) = \left\{ \tfrac{1}{2}\rho \right\}. \] 
The associated stable and unstable subspaces are given by
\[ \mathbb E^s (E_+) = 
\operatorname{span} \left\{  
\begin{bmatrix} 0 \\ 1 \\ 0 \\ 0 \\ 0 \\ 0 \end{bmatrix}, \,\, 
\begin{bmatrix} 0 \\ 1 \\ \rho \\ 0 \\ 0 \\ 0 \end{bmatrix}, \,\,
\begin{bmatrix} 0 \\ 0 \\ 0 \\ 1 \\ 0 \\ 0 \end{bmatrix}, \,\,
\begin{bmatrix} 0 \\ 0 \\ 0 \\ 0 \\ 0 \\ 1 \end{bmatrix}, \,\, 
\begin{bmatrix} 0 \\ 0 \\ 0 \\ 2\alpha_3 \\ \rho \\ 0 \end{bmatrix}
\right\}
\quad \text{ and } \quad 
\mathbb E^u(E_+) = \operatorname{span} \begin{bmatrix} 1 \\ 0 \\ 0 \\ 0 \\ 0 \\ 0 \end{bmatrix}.
 \]
Similarly, at the hypberbolic equilibrium $E_-$, the stable and unstable spectra are 
\[ \sigma_s(E_-) = \left\{ -\tfrac{1}{2}\rho \right\} \quad  \text{ and } \quad \sigma_u(E_-) = \left\{ \rho,\tfrac{3}{2}\rho,\tfrac{3}{2}\rho,2\rho,3\rho \right\}. \] 
The corresponding linear subspaces are 
\[
\mathbb E^s (E_-) = \operatorname{span} \begin{bmatrix} 1 \\ 0 \\ 0 \\ 0 \\ 0 \\ 0 \end{bmatrix} 
\quad \text{ and } \quad 
\mathbb E^u(E_-) = 
\operatorname{span} \left\{  
\begin{bmatrix} 0 \\ 0 \\ 0 \\ -2 \alpha_3 \\ \rho \\ 0 \end{bmatrix}, \,\,
\begin{bmatrix} 0 \\ 0 \\ 0 \\ 0 \\ 0 \\ 1 \end{bmatrix}, \,\, 
\begin{bmatrix} 0 \\ 0 \\ 0 \\ 1 \\ 0 \\ 0 \end{bmatrix}, \,\, 
\begin{bmatrix} 0 \\ -1 \\ \rho \\ 0 \\ 0 \\ 0 \end{bmatrix}, \,\,
\begin{bmatrix} 0 \\ 1 \\ 0 \\ 0 \\ 0 \\ 0 \end{bmatrix}
\right\}.
\]
Since the eigenvalues of $E_{\pm}$ are real and bounded away from zero, the existence and dimensions of the stable and unstable manifolds of $E_{\pm}$ follow from standard invariant manifold theory \cite{HPS1977}. 
Also, $E_{\pm}$ are connected by a heteroclinic orbit along $I$ \eqref{eq:def-I}.
This completes the proof of Part 1.

At each point on the line $\ell_1$,
the stable, unstable, and center spectra are 
\[ \sigma_s(\ell_1) = \{ -\sqrt 2 \}, \quad \sigma_u(\ell_1) = \{ \sqrt 2 \}, \quad \text{ and } \quad \sigma_c(\ell_1) = \left\{ 0,0,0,0 \right\}. \]
The stable, unstable, and (generalized) center eigenspaces are 
\[
\mathbb E^s(\ell_1) = \operatorname{span} \begin{bmatrix} 0 \\ \tfrac{2\sqrt 2}{3} \\ -\tfrac{4}{3} \\ q_1 \\ 0 \\ 0 \end{bmatrix}, \,
\mathbb E^u(\ell_1) = \operatorname{span} \begin{bmatrix} 0 \\ \tfrac{2\sqrt 2}{3} \\ \tfrac{4}{3} \\ -q_1 \\ 0 \\ 0 \end{bmatrix}, \, \text{ and } \,
\mathbb E^c(\ell_1) = \operatorname{span} \left\{ 
\begin{bmatrix} 1 \\ 0 \\ 0 \\ 0 \\ 0 \\ 0 \end{bmatrix}, \,
\begin{bmatrix} 0 \\ 0 \\ 0 \\ 1 \\ 0 \\ 0 \end{bmatrix}, \, 
\begin{bmatrix} 0 \\ 0 \\ 0 \\ 0 \\ 0 \\ 1 \end{bmatrix}, \,
\begin{bmatrix} 0 \\ -q_1 \\ 0 \\ 0 \\ 2 \\ 0 \end{bmatrix}
\right\},
\]
where the last element of $\mathbb E^c(\ell_1)$ is a generalized eigenvector.

Lastly, at each point on $\mathcal{M}_1$,
the stable, unstable, and center spectra are
\[ \sigma_s(\mathcal M_1) = \left\{ -\sqrt 2 + \mathcal O(\hat r_1^2) \right\}, \, \, \, \sigma_u(\mathcal M_1) = \left\{ \sqrt 2 + \mathcal O(\hat r_1^2) \right\}, \, \, \, \text{ and } \, \, \,  \sigma_c(\mathcal M_1) = \{ 0,0,0,0 \}. \]
The corresponding stable, unstable, and (generalized) center subspaces are
\begin{equation*}
  \begin{split}
    \mathbb E^s(\mathcal M_1) &= \operatorname{span} \begin{bmatrix} -\tfrac{1}{2} \hat r_1 + \mathcal O(\hat r_1^4) \\ \sqrt 2 + \mathcal O(\hat r_1^2) \\ -2 + \mathcal O(\hat r_1^2) \\ \tfrac{3}{2} \hat q_1 \\ 0 \\ \tfrac{3}{2} \hat \lambda_1 \end{bmatrix}, \quad
    \mathbb E^u(\mathcal M_1) = \operatorname{span} \begin{bmatrix} -\tfrac{1}{2} \hat r_1 + \mathcal O(\hat r_1^4) \\ -\sqrt 2 + \mathcal O(\hat r_1^2) \\ -2 + \mathcal O(\hat r_1^2) \\ \tfrac{3}{2} \hat q_1 \\ 0 \\ \tfrac{3}{2} \hat \lambda_1 \end{bmatrix}, \\
    \mathbb E^c(\mathcal M_1) &= \operatorname{span} \left\{ 
    	\begin{bmatrix} 0 \\ 0 \\ 0 \\ 0 \\ 0 \\ 1 \end{bmatrix}, 
	\begin{bmatrix} 0 \\ 0 \\ 0 \\ 1 \\ 0 \\ 0 \end{bmatrix}, 
	\begin{bmatrix} \tfrac{1}{2(\alpha_1-\alpha_2)} + \mathcal O(\hat r_1^2) \\ 0 \\ \hat r_1 \\ 0 \\ 0 \\ 0 \end{bmatrix}, 
	\begin{bmatrix} 0 \\ -\hat q_1 + \mathcal O(\hat r_1^2) \\ 0 \\ 0 \\ 2 + \mathcal O(\hat r_1^2) \\ 0 \end{bmatrix}
    \right\}.
  \end{split}
\end{equation*}
The last element of $\mathbb E^c(\mathcal M_1)$ is a generalized eigenvector. 
It becomes a proper eigenvector at two points (see ${\bf p}_{\pm}$ \eqref{eq:p-pm-RFSNII} below). 
The statements about the stable, unstable, and center manifolds of $\mathcal M_1$ follow from standard invariant manifold theory \cite{C1981,HPS1977}. 
This completes the proof of Part 2.
\end{proof} 

\begin{remark}
System \eqref{eq:K1-RFSNII} also has a two-dimensional surface of saddle fixed points,
\begin{equation}\label{eq:def-S1}
\mathcal S_1 = \left\{ \hat r_1 \geq 0,  \hat p_1 = 0,  \hat v_1 = -1 + (\alpha_1-\alpha_2)\hat r_1^2 + \mathcal O(\hat r_1^4), \hat q_1 = 0,  \hat \delta_1 \in \mathbb R,  \hat \lambda_1 = \tfrac{2\alpha_3}{\hat{r}_1} + \mathcal O(\hat r_1) \right\}.
\end{equation} 
\end{remark}
\noindent 
It corresponds to the ordinary singularity in the blown-up locus.

\subsection{The coordinate change between charts and the limits of
\texorpdfstring{$\Gamma_{20}$}{Lg} in chart \texorpdfstring{$\hat{K}_1$}{Lg}}
\label{sec:limits-of-Gamma0-in-K1}

In this section, we state the coordinate change between charts $\hat{K}_2$ and $\hat{K}_1$, and we use it to show that the algebraic solution $\Gamma_{20}(\hat{y}_2)$ \eqref{eq:Gamma_20} limits on key equilibria on the equator of $\mathbb{S}^5$ in $\hat{K}_1$.
Also, we show that the tangent vector limits on key components of the center subspaces at the equilibria. 

The coordinate change from the central chart $\hat{K}_2$ to the entry/exit chart $\hat{K}_1$ is
\begin{equation} \label{eq:kappa21}
\hat{\kappa}_{21}: 
  (\hat{r}_1, \hat{p}_1, \hat{v}_1, \tilde{q}_1, \hat{\delta}_1, \hat{\lambda}_1)
  =\left(\hat{r}_2 \sqrt{\hat{u}_2}, \, \, 
    \frac{\hat{p}_2}{\hat{u}_2^{3/2}}, \, \, 
    \frac{\hat{v}_2}{\hat{u}_2^2},  \, \,
    \frac{\tilde{q}_2}{\hat{u}_2^{3/2}}, \, \,
    \frac{1}{\hat{u}_2}, \, \,
    \frac{\hat{\lambda}_2}{\hat{u}_2^{3/2}}\right) \, \, {\rm for} \, \, \, \hat{u}_2>0.
\end{equation}
Hence, on the invariant set $\{ \hat{r}_1=0 \}$ in $\hat{K}_1$, the algebraic solution $\Gamma_{20}(\hat{y}_2)$ is
\begin{equation}\label{eq:limitGamma20}
   \begin{split}
       \kappa_{21}(\Gamma_{20}(\hat{y}_2)) = \left\{ 0, 
       -\frac{4\sqrt{3}\alpha_3 \hat{y}_2}{ z_2^{3/2}}, 
       -1-\frac{\alpha_3}{24 z_2^2}, 
       \frac{-2 \alpha_3^2\hat{y}_2^3}{\sqrt{3}z_2^{3/2}}, 
       \frac{12}{z_2}, \frac{24 \sqrt{3} \hat \lambda_2}{z_2^{3/2}} \right\}, 
   \end{split} 
\end{equation}
where $z_2 = -\alpha_3 \hat{y}_2^2$
and we recall that we work with $\alpha_3 < 0$.
The limits of this solution are the following points on the equator of $\mathbb{S}^5$:
\begin{equation}\label{eq:p-pm-RFSNII}
   {\bf p}_\pm = 
    \lim_{\hat{y}_2 \to \pm \infty} 
    (\hat{r}_1,\hat{p}_1,\hat{v}_1,\hat{q}_1,\hat{\delta}_1,\hat{\lambda}_1)
    = \left(0,0,-1,\mp\frac{2\sqrt{-\alpha_3}}{\sqrt{3}},0,0\right) \in \ell_1.
\end{equation}
Also, the unit tangent vector has the following limits:
\begin{equation}\label{eq:limit-tanvectors}
\lim_{\hat{y}_2 \to \pm \infty} 
\frac{\frac{d}{d\hat{y}_2}\kappa_{21}(\Gamma_{20})}{\Vert \frac{d}{d\hat{y}_2}\kappa_{21}(\Gamma_{20})\Vert}
= \left(0, 
\frac{\alpha_3}{\sqrt{-\alpha_3(3-\alpha_3)}}, 
0, 0,
\mp \sqrt{\frac{3}{3 - \alpha_3}},  
0 \right) \in \mathbb E^c(\ell_1).
\end{equation}
It is a multiple of the generalized eigenvector.
Therefore, we have shown that the special solution limits on the points ${\bf p}_\pm \in \ell_1$ tangent to $W^{c}(\ell_1)$ at these points.

\subsection{Existence of the perturbed true and faux canards 
\texorpdfstring{$\Gamma^\delta_t$ and $\Gamma^\delta_f$}{Lg}}
\label{sec:persistence-RFSNII}

In this section, we prove

\begin{lemma}\label{lem-persistenttrue+fauxcanards-RFSNII}
There exists a $\delta_0>0$ sufficiently small such that for each $0< \delta<\delta_0$ and each $\lambda\ge 0$ sufficiently small,
the singular true canard $\Gamma_{t}^0$ and singular faux canard $\Gamma_{f}^0$ of $M_{\rm RFSN-II}$ perturb to maximal true and faux canard solutions,  which we label $\Gamma_{t}^{\delta}$ and $\Gamma_{f}^{\delta}$ respectively, of \eqref{eq:y-ODE}.
Also, for positive values of $\lambda > \mathcal{O}(\delta)$, the connection between $\Gamma_t^\delta$ and $\Gamma_f^\delta$ persists for $0<\delta<\delta_0$.
\end{lemma}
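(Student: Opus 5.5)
The plan follows the proof of Lemma~\ref{lem-persistence-true+fauxcanards-RFS}, now in the blow-up \eqref{eq:blowup-RFSNII} of $M_{\rm RFSN-II}$. The argument splits into three steps: a transversality statement in the unperturbed problem in $\hat K_2$, persistence for small $\hat r_2 = \sqrt{\delta}$ together with tracking into $\hat K_1$, and an argument for the connection when $\lambda$ is not too small.

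\emph{Step 1: transversality on $\{\hat r_2=0\}$.} We work in $\hat K_2$ with the unperturbed Hamiltonian system \eqref{eq:K2-H2} and its algebraic solution $\Gamma_{20}$ \eqref{eq:Gamma_20}. By Section~\ref{sec:limits-of-Gamma0-in-K1}, $\hat\kappa_{21}(\Gamma_{20})$ emanates from ${\bf p}_-\in\ell_1$ and terminates at ${\bf p}_+\in\ell_1$. Its tangent vectors \eqref{eq:limit-tanvectors} lie in the generalized center direction, so $\Gamma_{20}\subset W^c({\bf p}_-)\cap W^c({\bf p}_+)$, where these are the center manifolds of $\mathcal M_1$ from Lemma~\ref{lem-K1}.

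To show that this intersection is transverse, we restrict to the level set $H_2=0$ and use the reduced system \eqref{eq:systemonH2zero} together with Proposition~\ref{prop:roleofGamma20}. There, $\Gamma_0^+$ and $\Gamma_0^-$ are separatrices bounding the class~1/2 and class~1/3 heteroclinics. The half-manifolds $W^s(\Gamma_0^+)$ and $W^u(\Gamma_0^-)$ are two-dimensional in a three-dimensional space and cross along $\Gamma_{20}$. We add back the $H_2$ direction and the parameter directions $\hat\lambda_2,\hat r_2$, using the scaling invariance \eqref{eq:scalinginvarH2} to account for the neutral direction. A Melnikov-type computation then shows that the center manifolds of ${\bf p}_\pm$, with $\hat\lambda_2$ and $\hat r_2$ appended, meet transversally in the extended phase space.

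Concretely, we compute the variational equation along $\Gamma_{20}$. We check that the only solution bounded as $\hat y_2\to\pm\infty$ (in the weighted sense induced by the $\hat K_1$ coordinates) is $d\Gamma_{20}/d\hat y_2$, together with possibly the scaling direction. The one delicate point is that $\Gamma_{20}$ is polynomial, so decay at the ends is algebraic rather than exponential. Boundedness must therefore be measured after transformation to $\hat K_1$, where ${\bf p}_\pm$ are normally hyperbolic in the $\hat p_1,\hat v_1$ directions.

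\emph{Step 2: persistence and tracking into $\hat K_1$.} Transversality gives persistence of the intersection for small $\hat r_2>0$ and bounded $\hat\lambda_2$, by regular perturbation of invariant manifolds in \eqref{eq:K2-RFSNII}. The intersection is foliated by $\hat r_2=\sqrt\delta$ and $\hat\lambda_2=\lambda\delta^{-3/2}$ constant.

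We then track the intersection into $\hat K_1$ through $\hat\kappa_{21}$. There, center manifold theory for $\ell_1\subset\mathcal M_1$, with the fibration from Lemma~\ref{lem-K1}, connects the perturbed orbits to the Fenichel slow manifold $S_s^\delta$. This establishes that $\Gamma_t^\delta$ and $\Gamma_f^\delta$ are maximal canards for $0<\delta<\delta_0$ and all $\lambda\ge0$ small, including $\lambda=0$.

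The main obstacle is uniformity in $\hat\lambda_2$: for $\lambda$ fixed small and $\delta\to0$, $\hat\lambda_2$ is unbounded. I would handle the regime $\lambda\gg\delta^{3/2}$ by passing to the chart $\hat\lambda=1$, or to the entry chart $\hat K_1$, where $\hat\lambda_1$ is small. That region is matched with the RFS analysis of Lemma~\ref{lem-persistence-true+fauxcanards-RFS}, using the remark that $\Gamma_t^0$ and $\Gamma_f^0$ converge to $\Gamma_{20}$ as $\lambda\to0^+$.

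\emph{Step 3: the connection for $\lambda>\mathcal O(\delta)$.} For $\lambda$ above order $\delta$, the singular canards restricted to $u\le0$ form the homoclinic loop of Proposition~\ref{prop:singularcanardsconnect}. Using the Hamiltonian $H_d$, they cross $\{q=0\}$ transversally, with an angle bounded below relative to the $\mathcal O(\delta)$ perturbation size.

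Hence $\Gamma_t^\delta$ meets $\{q=0\}$ transversally. By the reversibility $\mathcal R$, $\Gamma_f^\delta=\mathcal R(\Gamma_t^\delta)$ crosses $\{q=0\}$ at the same point. The two canards therefore coincide and the connection persists.
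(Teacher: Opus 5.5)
Your proposal follows essentially the paper's proof: $\Gamma_{20}$ limits on ${\bf p}_\pm\in\ell_1$ tangent to the generalized center direction, and $d\Gamma_{20}/d\hat y_2$ is the only variational solution bounded at both ends. The intersection then persists for small $\hat r_2$ and is tracked into $\hat K_1$ with its foliation by $\hat r_2,\hat\lambda_2$. The connection follows from transversality at $\{q=0\}$ via $H_d$, together with $\mathcal R$, as in Lemma~\ref{lem-persistence-true+fauxcanards-RFS} extended down to $\lambda=\mathcal O(\delta)$.

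Three side remarks:
\begin{itemize}
\item The detour through Proposition~\ref{prop:roleofGamma20} is not needed. Those separatrices sit at the fold end of $\Gamma_0^\pm$, not at ${\bf p}_\pm$.
\item There is no extra ``scaling direction''. The weights in \eqref{eq:scalinginvarH2} are incompatible with $\hat v_2'=\hat q_2$ and $\hat q_2'=\alpha_3\hat u_2$, so they do not give a symmetry of the flow.
\item Your observation that the $\hat K_2$ argument only covers bounded $\hat\lambda_2=\lambda\delta^{-3/2}$ is a fair point that the paper leaves implicit.
\end{itemize}
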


\begin{proof} 
In chart $\hat{K}_2$, the left-half ($-\infty < \hat{y}_2<0$) of the key algebraic solution $\Gamma_{20}(\hat{y}_2)$ corresponds to the singular true canard $\Gamma_{t}^0$ and the right-half ($0< \hat{y}_2 < \infty$) to the singular faux canard $\Gamma_{f}^0$.
By \eqref{eq:limitGamma20},
$\Gamma_{20}(\hat{y}_2)$ is backward asymptotic to the equilibrium ${\bf p}_{-} \in \ell_1 \subset \mathcal{M}_1$, and forward asymptotic to ${\bf p}_{+} \in \ell_1 \subset \mathcal{M}_1$.
Both points lie on the equator of the blown-up hemisphere $\mathbb{S}^5$.

The analysis of the asymptotics of the tangent vector in \eqref{eq:limit-tanvectors} shows that $\Gamma_{20}(\hat{y}_2)$ emanates from the four-dimensional normally attracting $W^{c}\left( \bf{p}_- \right)$ and terminates in the four-dimensional normally repelling $W^{c}\left( \bf{p}_+ \right)$ in the $\left\{ \hat{r}_1=0 \right\}$ hyperplane of chart $\hat{K}_1$.
The asymptotic behavior is decomposed into components tangent to 
$\mathcal{M}_{1}\rvert_{\hat{r}_{1}=0}$ and in the neutrally stable fiber over the limit points.
Hence, the only solution of the variational equation about $\Gamma_{20}$ that is bounded in both limits as $\hat{y}_2 \to \pm \infty$ is $\tfrac{d}{d\hat{y}_2}\kappa_{21}(\Gamma_{20})$.
All other solutions diverge from ${\bf p}_-$ in backward time and from ${\bf p}_+$ in forward time. 
Therefore, in the invariant set $\{ \hat r_2=0 \}$, $\Gamma_{20}(\hat{y}_2)$ lies in the transverse intersection of $W^{c}\left( \bf{p}_- \right)$ and $W^{c}\left( \bf{p}_+ \right)$, which is a two-dimensional intersection manifold.

Next, because this intersection is transverse on $\{ \hat r_2 = 0 \}$ for each fixed value of $\hat \lambda_2$, we know from regular perturbation theory that the intersection of $W^{c}({\bf p}_-)$ and $W^{c}({\bf p}_+)$ is also transverse in chart $\hat K_2$ for each $\hat r_2>0$ sufficiently small. 
By tracking this intersection into chart $\hat K_1$, we see that the intersection remains transverse.
Moreover, the intersection manifold $W^{c}\left( \bf{p}_- \right) \cap W^{c}\left( \bf{p}_+\right)$ is foliated by the parameter $\mathcal{\hat{\lambda}}_2$ in chart $\hat{K}_2$.
Hence, by tracking the solutions of this foliation into chart $\hat{K}_1$, we see that for each fixed $\hat{r}_2$ 
this foliation carries over into a nontrivial foliation by $\hat{r}_1^2 {\hat{\lambda}}_1 = {\rm constant}$ in $\hat{K}_1$
(where we recall that ${\hat{\lambda}}_1 \hat{r}_1^2 = {\hat{\lambda}}_2 \hat{r}_2^2$).
Therefore, for each fixed $\mathcal{\hat{\lambda}}_2$ (or $\hat{r}_1^2 \mathcal{\hat{\lambda}}_1$ with $\hat{r}_1$ sufficiently small), $\Gamma_{20}(\hat{y}_2)$ is a curve, and it corresponds to the maximal canard solution. 
This shows that the singular canards $\Gamma_{t}^0$ and $\Gamma_{f}^0$ perturb to maximal canard solutions $\Gamma_{t}^{\delta}$ and $\Gamma_{f}^{\delta}$ for all $\delta > 0$ sufficiently small, as stated in the lemma.

To establish the persistence of the connection, we recall that the last part of the proof of Lemma~\ref{lem-persistence-true+fauxcanards-RFS} establishes the persistence of the connection between the maximal true and faux canards in the regime of $\mathcal O(1)$ $\lambda>0$, where the folded singularity is an RFS.
The analysis performed there may be extended down to the regime in which $\lambda=\mathcal{O}(\delta)$ by using a $\delta$-dependent rescaling of $q$.
\end{proof}

\section{Proof of Theorem~\texorpdfstring{\ref{thm:1}}{Lg}}\label{sec:proof-theorem1}

In this brief section, we draw direct conclusions from the results of the analysis in Sections~\ref{sec:geodesing} and \ref{sec:RFSNII-desing} to prove Theorem~\ref{thm:1}. 

\medskip
\begin{proof} 
For all $\mathcal{O}(1)$ values of $\lambda> 0$, Lemma~\ref{lem-persistence-true+fauxcanards-RFS} establishes that maximal true and faux canards of the folded singularity $M_{\rm RFS}$ persist in the spatial ODE system \eqref{eq:y-ODE} for all $0 < \delta \ll 1$.
Similarly, for sufficiently small values $\lambda > 0$, Lemma~\ref{lem-persistenttrue+fauxcanards-RFSNII} establishes that maximal true and faux canards of the folded singularity $M_{\rm RFSN-II}$ persist in the spatial ODE system \eqref{eq:y-ODE} for all $0 < \delta \ll 1$.

All the canard solutions that exist in the spatial ODE system \eqref{eq:y-ODE} are time-independent, spatial canard solutions of the system of PDEs \eqref{eq:genAI}, equivalently (by Proposition~\ref{prop1}) of the activator-inhibitor reaction-diffusion systems \eqref{eq:genAI-UV} with non-degenerate fold points.
\end{proof}

\section{Proof of Theorem~\texorpdfstring{\ref{thm:2}}{Lg}}\label{sec:proof-theorem2}

We construct spatially-periodic canard solutions in the four-dimensional phase space of \eqref{eq:y-ODE} for all $\mathcal{O}(1)$ values of $\lambda>0$, with $0<\delta \ll 1$, thereby proving Theorem~\ref{thm:2}.
The spatially-periodic canards are time-independent solutions of the PDE \eqref{eq:genAI}.
The variables $(u,p)$ are fast spatial variables, {\it i.e.,} can vary rapidly on short intervals, while $(v,q)$ are slow, which vary gradually in $x$.
 
In step 1, we will show that for each $\lambda>0$ the invariant manifolds $W^u(S_s^\delta)$ and $ W^s(S_s^\delta)$ intersect transversely, where we recall that the manifolds are defined in Section~\ref{sec:layerproblem+criticalmanifold+desingularizedsystem}, see especially \eqref{eq:S-delta}. 
Then, in step 2, we will construct the singular limits ($\delta=0$) of the spatially-periodic canard solutions and show that they persist for $0<\delta \ll 1$. This  will establish the theorem.

We begin by scaling the variables to identify the distinguished limit in which the rate of slow variation of $(v,q)$ is the same as the amplitude of the dominant terms of the perturbation in the fast $(u,p)$ components of the vector field.
Namely, we set
\begin{equation}\label{eq:distinguishedlimit}
u = \delta^{2/5} {\bar U}, \quad
p = \delta^{3/5} {\bar P}, \quad 
v = \delta^{4/5} {\bar V}, \quad
q = \delta^{2/5} {\bar Q}, \quad 
y = \delta^{-1/5} \xi.
\end{equation}
System~\eqref{eq:y-ODE} becomes
\begin{equation}\label{eq:72}
\begin{split}
    {\bar U}_\xi &= {\bar P}, \\
    {\bar P}_\xi &= {\bar V} + {\bar U}^2 
                    + \delta^{2/5} \left( \alpha_1 {\bar U}{\bar V} + \alpha_2 {\bar U}^3 \right)
                    + \delta^{4/5}\left( \beta_1 {\bar V}^2 + \beta_2 {\bar U}^2 {\bar V} + \beta_3 {\bar U}^4 \right) 
                    + \mathcal{O}(\delta^{6/5}), \\
    {\bar V}_\xi &= \delta^{2/5} {\bar Q}, \\      
    {\bar Q}_\xi &= \delta^{2/5} \left(-\tfrac{1}{2} \lambda + \delta^{2/5}\alpha_3 {\bar U} + \mathcal{O}(\delta^{4/5}) \right).
\end{split}
\end{equation}
Similar to \eqref{eq:union}, this system has a critical manifold
\begin{equation*}
\begin{split} 
    S^0 &= S_s^0 \cup L \cup S_c^0, \quad {\rm where} \\
    S_s^0 &= \{ \bar{U}=\sqrt{-\bar{V} \, }, \, \, \bar{P}=0,  \, \, \bar{V}<0, \, \, \bar{Q} \in \mathbb{R} \}, \\ 
    L &= \{\bar{U}=0, \, \, \bar{P}=0, \, \,\bar{V}=0, \, \, \bar{Q} \in \mathbb{R} \}, \\
    S_c^0 &= \{\bar{U} = - \sqrt{-\bar{V} \, }, \, \,  \bar{P}=0,\,\, \bar{V}<0, \, \, \bar{Q} \in \mathbb{R} \}.
    \end{split} 
\end{equation*}

It is useful to work with the new dependent variable $W= - \bar{V}$ and to rectify the saddle sheet $S_s^0$ to the $(W,\bar{Q})$ plane.
Hence, we define the new variables
\begin{equation}
    U = \bar{U} - \bar{U}_s(W), \quad
    P = \bar{P}, \quad
    W = - \bar{V}, \quad 
    Q = \bar{Q},
\end{equation}
where $\bar{U}_s(W)=\sqrt{W}$.
System \eqref{eq:72} transforms to 
\begin{equation}\label{eq:UPWQ} 
\begin{split}
    U_\xi &= P + \delta^{2/5} \tfrac{1}{2\sqrt{W}} Q, \\
    P_\xi &= 2 \sqrt{W} U + U^2  
                    + \delta^{2/5} \left( -\alpha_1 W (U + \sqrt{W}) + \alpha_2 (U + \sqrt{W})^3\right) \\
               &{\hskip1.15truein} + \delta^{4/5}\left( \beta_1 W^2 - \beta_2 W (U + \sqrt{W})^2 + \beta_3 ({\bar U} + \sqrt{W})^4 \right) 
                    + \mathcal{O}(\delta^{6/5}), \\
    W_\xi &= - \delta^{2/5} Q, \\      
    Q_\xi &= \delta^{2/5} \left(-\tfrac{1}{2} \lambda + \delta^{2/5}\alpha_3 (U + \sqrt{W})  + \mathcal O(\delta^{4/5}) \right).
\end{split}
\end{equation}
The sheets of the critical manifold are now given by
\begin{equation}\label{eq:S0-sec7}
    S_s^0 = \{ U = 0, \, \, P=0 \} 
    \hskip0.2truein {\rm and} \hskip0.2truein
    S_c^0 = \{ U=-2\sqrt{W}, \, \, P=0 \}. 
\end{equation}

The layer problem of \eqref{eq:UPWQ} is obtained by setting $\delta=0$,
\begin{equation*}\label{eq:HF} 
    \begin{split}
        U_\xi &= P, \\
        P_\xi &= 2 \sqrt{W} U + U^2,
    \end{split}
\end{equation*}
in which $W$ and $Q$ are constants.
The layer problem is Hamiltonian with 
$H_F = \tfrac{1}{2}P^2 - \sqrt{W} U^2 - \tfrac{1}{3} U^3$,
and the saddle fixed point $(0,0)$ has a homoclinic orbit.
Therefore, in the full four-dimensional phase space of 
\eqref{eq:UPWQ} with $\delta=0$, every point $(0,0,W,Q) \in S_s^0$ is connected to itself by a homoclinic orbit 
\begin{equation}\label{eq:UH}
  \begin{split}
U_H(\xi;W) &= - 3 \sqrt{W} {\rm sech}^2\left( \frac{W^{1/4}}{\sqrt{2}} \xi \right), \\
P_H(\xi;W) &= \frac{d}{d\xi} U_H(\xi) =  -3 \sqrt{2} W^{3/4} {\rm sech}^2\left( \frac{W^{1/4}}{\sqrt{2}}\xi \right) 
{\rm tanh}\left( \frac{W^{1/4}}{\sqrt{2}} \xi  \right).
  \end{split}
\end{equation}
Geometrically, the three-dimensional manifolds $W^u(S_s^0)$ and $W^s(S_s^0)$ coincide in a two-parameter family of these homoclinics.

For $0<\delta \ll 1$, the perturbed manifolds $W^u_{\rm loc}(S_s^\delta)$ and $W^s_{\rm loc}(S_s^\delta)$ no longer coincide.
Let $\Gamma^{u,s}_\delta(\xi)$ denote solutions on
$W^{u,s}_{\rm loc}(S_s^\delta)$ for $\xi \in (-\infty,0)$ and $\xi\in (0,\infty)$, respectively.
Measured along a normal vector to the union of $W^u(S_s^0)$ and $W^s(S_s^0)$, the splitting distance between the local manifolds is
\begin{equation}\label{eq:def-DeltaHF}
    \Delta H_F = 
    \int_{-\infty}^0 \left. \frac{dH_F}{d\xi} \right\vert_{\Gamma^u_\delta} d\xi 
    + \int_0^{\infty} \left. \frac{dH_F}{d\xi} \right\vert_{\Gamma^s_\delta} d\xi.
\end{equation}
Now, differentiating $H_F$ along solutions of \eqref{eq:UPWQ}, we find
\begin{equation}
    \begin{split}
\frac{dH_F}{d\xi} &= \delta^{2/5} \left( - UQ - \alpha_1 P (U + \sqrt{W} ) W + \alpha_2 P (U + \sqrt{W})^3 \right)  \\
    &\qquad + \delta^{4/5} P \left( \beta_1 W^2 - \beta_2 W (U + \sqrt{W})^2 + \beta_3 (U + \sqrt{W})^4 \right) 
    + \mathcal{O}(\delta^{6/5}).
    \end{split}
\end{equation}
Also, we use the following asymptotic expansion of the solutions 
$\Gamma_\delta^{u,s}$: 
\begin{equation*}
    \Gamma_\delta^{u,s} (\xi)  
    = \Gamma_0 (\xi;W) + \delta^{2/5} \Gamma_1^{s,u}(\xi) + \delta^{4/5} \Gamma_2^{u,s}(\xi) + \mathcal{O}(\delta^{6/5}).
\end{equation*}
Then, order by order, we derive the solutions.

At $\mathcal{O}(1)$, we find
\begin{equation*}
    \Gamma_0(\xi;W_0) = 
    \begin{bmatrix}
     U_H (\xi; W_0) \\
     \frac{dU_H}{d\xi} (\xi; W_0) \\
     W_0 \\
     Q_0 
    \end{bmatrix}.
\end{equation*}
Next, at $\mathcal{O}(\delta^{2/5})$, the system is
\begin{equation}\label{eq:delta25sys}
    \begin{split}
        U_{1_\xi} &=  P_1 + \frac{1}{2\sqrt{W
    _0}} Q_0, \\
        P_{1_\xi} &= 2 (U_0 + \sqrt{W_0}) U_1 + \frac{U_0}{\sqrt{W_0}}W_1 - \alpha_1 W_0 (U_0 + \sqrt{W_0}) + \alpha_2 (U_0 + \sqrt{W_0})^3, \\
        W_{1_\xi} &= - Q_0, \\
        Q_{1_\xi} &= - \tfrac{1}{2} \lambda.
    \end{split}
\end{equation}
The equations for $W_1$ and $Q_1$ decouple, so that $W_1(\xi) =  - Q_0 \xi + W_1(0)$ and 
$Q_1(\xi) = -\tfrac{1}{2} \lambda \xi + Q_1(0)$.
Substituting these into the first two components of the vector field, we see that $(U_1,P_1)$ satisfies an inhomogeneous linear system, with solutions
\begin{equation}
    \begin{bmatrix}
        U_1 \\
        P_1
    \end{bmatrix}
    = \Phi_1(\xi) \Phi_1(0)^{-1} 
    \begin{bmatrix}
        U_1(0) \\
        P_1(0) 
    \end{bmatrix}
    + \int_0^\xi \Phi_1 (\xi) \Phi_1(s)^{-1} F_1 (s) ds,  
\end{equation}
where $\Phi_1(\xi)$ denotes a fundamental matrix of the homogeneous problem and 
\begin{equation*}
    F_1 (\xi) = 
    \begin{bmatrix}
         \frac{1}{2\sqrt{W_0}} Q_0 \\
         \frac{U_0}{\sqrt{W_0}} W_1 - \alpha_1 W_0 (U_0 + \sqrt{W}_0) + \alpha_2 (U_0+\sqrt{W}_0)^3
    \end{bmatrix}.
\end{equation*}
Let $\chi_H(\xi;W_0)=P_H(\xi;W_0) \int_0^\xi \frac{1}{P_H^2(s;W_0)} ds$ be the second, linearly independent solution (obtained via reduction of order). 
Then,
\begin{equation}
  \Phi_1(\xi) = 
  \begin{bmatrix}
      P_H(\xi;W_0)  &  \chi_H (\xi; W_0) \\
      \tfrac{dP_H}{d\xi} (\xi; W_0) &
      \tfrac{d\chi_H}{d\xi} (\xi; W_0)
  \end{bmatrix}.
\end{equation}
Moreover, one may proceed to higher order if desired.

Calculating the integrals in \eqref{eq:def-DeltaHF}, we find the splitting distance of $W^u_{\rm loc}(S_s^\delta)$ and $W^s_{\rm loc}(S_s^\delta)$ is 
\begin{equation}\label{eq:Delta_HF}
\Delta H_F = \delta^{2/5} D_1 + \delta^{4/5} D_2 + \mathcal{O}(\delta^{6/5}).
\end{equation}
Here,
\begin{equation}
D_1 = \int_{-\infty}^\infty \left. \left( -U_0 Q_0 -\alpha_1 P_0 W_0 (U_0 + \sqrt{W_0}) +\alpha_2 P_0 (U_0+\sqrt{W_0})^3 \right) \right\vert_{\Gamma_0^u \cup \Gamma_0^s} d\xi= 3\sqrt{2} \pi W_0^{1/4} Q_0,
\end{equation}
where the contributions from the second and third terms in the integrand vanish since they are odd about $\xi=0$ (recall $P_H=\tfrac{dU_H}{d\xi}$).
Hence, $D_1$ has a simple zero at $Q_0=0$.

Also,
\begin{equation}
\begin{split} 
D_2 = \int_{-\infty}^\infty 
&\left[ -U_0Q_1 - U_1 Q_0 -\alpha_1 W_0 P_1 (U_0 + \sqrt{W_0})
+\alpha_2 P_1 (U_0 + \sqrt{W_0})^3 \right. \\
&+P_0( -\alpha_1 U_1 W_0 + 3 \alpha_2 U_1 (U_0 +\sqrt{W_0})^2) \\
&+\frac{1}{2\sqrt{ W_0}} W_1( -2\alpha_1 U_0 \sqrt{W_0} -3\alpha_1 W_0 + 3\alpha_2 (U_0+\sqrt{W_0})^2) \\
&+\left. \left. \beta_1 W_0^2 -\beta_2 W_0 (U_0+\sqrt{W_0}^2 +\beta_3(U_0 + \sqrt{W_0})^4\right] \right\vert_{\Gamma_0^u \cup \Gamma_0^s} d\xi.
\end{split}
\end{equation}
$D_2$ can be evaluated symbolically. 
It has a zero when $U_1(0)=0,$
$P_1(0)=0,$ and
$Q_1(0)=0$.
Moreover, this zero is simple with respect to $Q_1(0)$.
Therefore, $W^u(S_s^\delta)$ and $W^s(S_s^\delta)$ intersect transversely in the hyperplane $\left\{ P=0, \, \, Q=0 \right\}.$
The intersection manifold is a 2-D surface.

Next, we calculate the takeoff and touchdown curves.
These are the curves on $S_s^\delta$ that contain the base points of the one-dimensional fast unstable and stable fibers, respectively, that foliate the two-dimensional surface in which $W^u(S_s^\delta)$ and $W^s(S_s^\delta)$ intersect.

Let $I_F= \left\{ \xi \in \mathbb{R}: -L < \xi < L \right\}, $ where $L$ is chosen so that $U_{\rm H}$ and $P_{\rm H}$ are $\mathcal{O}({\rm exp}(-\frac{1}{\delta^{2/5}}))$ at $\xi= \pm L.$
That is, we set 
\begin{equation}\label{eq:L}
L= \frac{\sqrt{2}}{W_0^{1/4}} \left(    \frac{1}{2\delta^{2/5}} + \log 2 \right).
\end{equation}
Along solutions that lie in the two-dimensional intersection surface,
the change (or `fast jump') in the $Q$ coordinate from $\xi=-L$ to $\xi=L$ is given by
\begin{equation*} 
\begin{split}
\Delta Q (W_0,\delta) &= 
\int_{I_F} \left. \frac{dQ}{d\xi}\right\vert_{\Gamma_t^0 \cup \Gamma_f^0}  d\xi
= \int_{I_F} \left[ \frac{dQ_0}{d\xi} + \delta^{2/5} \frac{dQ_1}{d\xi} + \delta^{4/5} \frac{dQ_2}{d\xi} + \mathcal{O}(\delta^{6/5})\right] d\xi \\
&= \int_{-L}^L \left[ -\frac{1}{2} \delta^{2/5}\lambda 
+\delta^{4/5} \alpha_3 (U_0 +\sqrt{W_0})  +\mathcal{O}(\delta^{6/5}) \right] d\xi\\
&= -
\delta^{2/5} \lambda L  + 2\delta^{4/5}\alpha_3 \left( \sqrt{W_0} L - 3\sqrt{2} W_0^{1/4} \tanh\left( \frac{W_0^{1/4}}{\sqrt{2}}L \right)\right)  +\mathcal{O}(\delta^{6/5}).
\end{split}
\end{equation*}
We note that the factor of $\tanh$ is $\mathcal{O}(1)$.

\begin{figure}[!ht]
   \centering
   \includegraphics[width=3in]{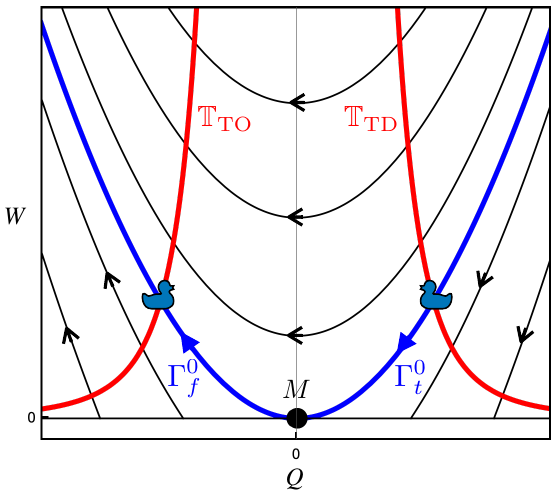}
   \caption{The (red) takeoff and touchdown curves, $\mathbb T_{\rm TO}$ and $\mathbb T_{\rm TD}$ \eqref{eq:TOTD}, on the saddle sheet $S_s^0$ \eqref{eq:S0-sec7} in the slow $(W,Q)$ plane. 
   $\mathbb T_{\rm TO}$ is the union of the base points of the fast unstable fibers on $W^u(S_s^\delta)$ in the intersection $W^u(S_s^\delta) \cap W^s(S_s^\delta)$. 
   $\mathbb T_{\rm TD}$ is the union of the base points of the fast stable fibers on $W^s(S_s^\delta)$ in that intersection. 
   The two-dimensional manifold $W^s(S_s^{\delta}) \cap W^u(S_s^{\delta})$ is foliated by homoclinic orbits that connect the base points on these curves. 
   The distance between the two duckies is $\Delta Q(W_{\rm TOTD},\delta)$, see \eqref{eq:intersectionequation-4-W} and \eqref{eq:WTOTD}.
   The reduced flow (black curves) on $S_s^0$ is also shown, as are 
   the true canard (blue, $\Gamma_t^0$) and faux canard (blue, $\Gamma_f^0$) of $M_{\rm RFS}$. 
   The canard solutions intersect $\mathbb T_{\rm TO}$ and $\mathbb T_{\rm TD}$ transversely at the takeoff and touchdown points (ducky markers).}
   \label{fig:takeofftouchdown}
\end{figure}

Now, using
\eqref{eq:L}, we find that 
to leading order 
\begin{equation}\label{eq:DeltaQ=2Psi}
\Delta Q (W_0,\delta)= 2 \Psi (W_0,\delta),
\end{equation}
where 
\begin{equation*}
\Psi(W_0,\delta) = -\frac{\lambda}{2\sqrt{2} W_0^{1/4} } 
+ \delta^{2/5} \left( \frac{1}{2} \alpha_3 \sqrt{2} W_0^{1/4} - \frac{\lambda \log 2}{\sqrt{2} W_0^{1/4} }\right) +\mathcal{O}(\delta^{4/5}).
\end{equation*}
Therefore, the takeoff and touchdown curves (red curves in Figures~\ref{fig:takeofftouchdown} and \ref{fig:pulseconstruction}) are
\begin{equation}\label{eq:TOTD}
\mathbb{T}_{\rm TO} = \left\{ (U,P,W,Q) \in S_s^\delta : Q=\Psi (W,\delta)\right\} \, \,  {\rm and} \, \, 
\mathbb{T}_{\rm TD} = \left\{  (U,P,W,Q) \in S_s^\delta : Q = - \Psi (W,\delta) \right\}.
\end{equation}

In the $(U,P,W,Q)$ coordinates, the slow flow on $S_s^0$ is given by
\begin{equation}
\begin{split}
\frac{dW}{d\zeta} &= - Q\\
\frac{dQ}{d\zeta} &= -\frac{1}{2} \lambda,
\end{split}
\end{equation}
and the singular true and faux canards are given by
\begin{equation}\label{eq:Gamma0-UPWQ}
\begin{split}
\Gamma_t^0 &= \left\{ (U,P,W,Q) =\left(0,0,\tfrac{1}{4}\lambda \zeta^2, -\tfrac{1}{2}\lambda \zeta\right), \, \, \zeta<0\right\} 
= \left\{ (U,P,W,Q) \in S_s^0 : Q = \sqrt{\lambda W} \right\}, 
\\
\Gamma_f^0 &= \left\{ (U,P,W,Q) =\left(0,0,\tfrac{1}{4}\lambda \zeta^2, -\tfrac{1}{2}\lambda \zeta\right), \, \, \zeta > 0\right\} 
= \left\{ (U,P,W,Q) \in S_s^0 : Q = - \sqrt{\lambda W} \right\}.
\end{split}
\end{equation}

Now, we show that the singular canards intersect the takeoff and touchdown curves transversely on $S_s^0$, as follows.
For each set of the system parameters $\lambda>0,$ $\alpha_1, \ldots, \alpha_5 \in \mathbb{R},$ and $0< \delta \ll 1$,
the $W$ coordinate, which will be the same for both intersection points by symmetry, is given by the unique positive root of the following equation:
\begin{equation}\label{eq:intersectionequation-4-W}
    \Psi(W,\delta) = -\sqrt{\lambda W}.
\end{equation}
The root is given by
\begin{equation}\label{eq:WTOTD}
W_{\rm TOTD} (\delta) = \tfrac{1}{4} \lambda^{2/3} + \tfrac{1}{3} ( 2 \lambda^{2/3} \log 2 - \alpha_3) \delta^{2/5} + \mathcal{O}(\delta^{4/5}).
\end{equation}
Moreover, the intersections are transverse, as may be verified by a direct calculation of the slopes of the curves at the intersection points.

\begin{figure}[!ht]
   \centering
   \includegraphics[width=3.75in]{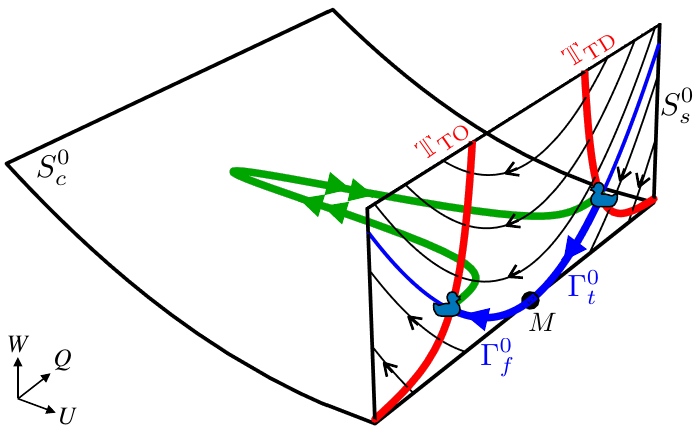}
   \caption{Construction of a singular spatially-periodic canard solution. 
   The singular orbit consists of three distinct segments.
   $\gamma_f^{\rm slow}$: the slow segment of the faux canard $\Gamma_f^0$ (blue curve) from the RFS (labelled $M$ in the figure) to the takeoff point $\Gamma_f^0 \cap \mathbb T_{\rm TO}$ (left ducky), which is at the height of $W_{\rm TOTD}$.
   $\gamma^{\rm fast}$: the fast segment (green curve) that corresponds to the singular limit in the plane $W=W_{\rm TOTD}$ of the orbit in $W^s(S_s^{\delta}) \cap W^u(S_s^{\delta})$ that connects the takeoff point $\Gamma_f^0 \cap \mathbb T_{\rm TO}$ (left ducky) to the touchdown point $\Gamma_t^0 \cap \mathbb T_{\rm TD}$ (right ducky).
   $\gamma_t^{\rm slow}$: the slow segment of the true canard $\Gamma_t^0$ (blue curve) from the touchdown point $\Gamma_t^0 \cap \mathbb T_{\rm TD}$ (right ducky) to the RFS.}
   \label{fig:pulseconstruction}
\end{figure}

With the above components, we now construct the singular spatially-periodic orbits.
We make the following definition,
\begin{definition}\label{def:singpo}
Fix a set of system parameters $\lambda>0,$ $\alpha_1, \ldots, \alpha_5 \in \mathbb{R},$ and $0< \delta \ll 1$.
Let $W_{\rm TOTD}$ \eqref{eq:WTOTD} denote the unique solution of \eqref{eq:intersectionequation-4-W} corresponding to the transverse intersections $\Gamma_f^0 \cap \mathbb{T}_{\rm TO}$ and $\Gamma_t^0 \cap \mathbb{T}_{\rm TD}$.
Let $\gamma^{\rm slow}_{\rm max}$ be the union of the segment of $\Gamma_t^0$ on $S_s^0$ from the touchdown point $\Gamma_t^0 \cap \mathbb{T}_{\rm TD}$ with $W=W_{\rm TOTD}$ to $M_{\rm RFS}$ with the segment of $\Gamma^0_f$ on $S_s^0$ from $M_{\rm RFS}$ to the takeoff point at $\Gamma_f^0 \cap \mathbb{T}_{\rm TO}$ with $W=W_{\rm TOTD}$.
In the plane $W=W_{\rm TOTD}$, let $\gamma^{\rm fast}$ be the singular limit of the heteroclinic orbit that connects the takeoff point $\Gamma_f^0 \cap \mathbb{T}_{\rm TO}$ to the touchdown point $\Gamma_t^0 \cap \mathbb{T}_{\rm TD}$.
Then, we say that the concatenated curve 
\begin{equation} 
\gamma^0_{\rm max}=\gamma^{\rm slow}_{\rm max} \cup \gamma^{\rm fast}
\end{equation}
is a {\it singular maximal spatially-periodic canard solution} of \eqref{eq:y-ODE}.
\end{definition}

Using the results of \cite{dRDR2016,ST2001}, we now establish the following lemma:

\begin{lemma} For each singular maximal spatially-periodic canard solution $\gamma^0_{\rm max}$, 
there exists a $\delta_0>0$ sufficiently small such that it persists as a maximal spatially-periodic canard solution $\gamma^\delta_{\rm max}$ of \eqref{eq:y-ODE} for all $0 < \delta < \delta_0$.
The period of a persistent maximal spatially-periodic solution is given to leading order by the time of flow (in $x$) along $\gamma^{\rm slow}_{\rm max}$ between the takeoff point $\Gamma_f^0 \cap \mathbb{T}_{\rm TO}$ and the touchdown point $\Gamma_t^0 \cap \mathbb{T}_{\rm TD}$.
\end{lemma}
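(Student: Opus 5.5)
The plan is to prove persistence by a shooting and matching argument built on the transversality results already established. The guiding reference is the persistence theory for singular periodic orbits through folded singularities in \cite{dRDR2016,ST2001}. The reversibility symmetry $\mathcal R$ in \eqref{eq:reversible} reduces the problem to a single symmetric half-orbit.

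\textbf{Step 1 (reduce to symmetric orbits).} Since $\mathcal R$ fixes the two-dimensional plane $\mathrm{Fix}(\mathcal R)=\{p=0,\,q=0\}$, any orbit that meets $\mathrm{Fix}(\mathcal R)$ twice is periodic. The singular orbit $\gamma^0_{\rm max}$ is symmetric and meets this plane twice:
\begin{itemize}
\item once at $M_{\rm RFS}$;
\item once at the midpoint of $\gamma^{\rm fast}$, which lies in $\{P=0,\,Q=0\}$ by the transversality computation for $D_2$.
\end{itemize}
It therefore suffices to find, for $0<\delta\ll1$, an orbit segment that starts on $\mathrm{Fix}(\mathcal R)$ near $M_{\rm RFS}$ and ends on $\mathrm{Fix}(\mathcal R)$ near the midpoint of the fast jump. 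I count dimensions as follows. A one-parameter family of points of $\mathrm{Fix}(\mathcal R)$ lies near $M_{\rm RFS}$ on the maximal canard. The intersection manifold $W^u(S_s^\delta)\cap W^s(S_s^\delta)$ is two-dimensional and meets $\mathrm{Fix}(\mathcal R)$ in a curve. The task is to show that these two objects are connected by a flow segment.

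\textbf{Step 2 (slow segment near the canard).} By Lemma~\ref{lem-persistence-true+fauxcanards-RFS}, the maximal faux canard $\Gamma_f^\delta$ exists for $0<\delta<\delta_0$. It lies in the transverse intersection of slow manifolds and starts on $\mathrm{Fix}(\mathcal R)$, because $\Gamma_t^\delta=\mathcal R(\Gamma_f^\delta)$ and the two meet on $\{q=0\}$. Away from the blow-up neighbourhood, $\Gamma_f^\delta$ lies on $S_s^\delta$ and is $\mathcal O(\delta)$-close to $\Gamma_f^0$. I then follow $\Gamma_f^\delta$ along $S_s^\delta$ to the height $W\approx W_{\rm TOTD}$.

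\textbf{Step 3 (fast segment, the main obstacle).} The singular intersection of $\Gamma_f^0$ with $\mathbb T_{\rm TO}$ is transverse by \eqref{eq:intersectionequation-4-W}, so the perturbed intersection $\Gamma_f^\delta\cap\mathbb T_{\rm TO}^\delta$ persists. The difficulty is that $\Gamma_f^\delta$ lies on $S_s^\delta$ itself, whereas the periodic orbit must leave along a fast unstable fiber. I would handle this with the exchange lemma, or equivalently the Fenichel-coordinate argument of \cite{ST2001}.
\begin{itemize}
\item Consider the one-parameter family of canards exponentially close to $\Gamma_f^\delta$, described in the remark after Lemma~\ref{lem-persistence-true+fauxcanards-RFS}.
\item Together these form a two-dimensional surface near $\Gamma_f^\delta$. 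The surface leaves the neighbourhood of $S_s^\delta$ $C^1$-close to $W^u(S_s^\delta)$ restricted to a neighbourhood of the takeoff point.
\item This surface must be shown to meet $W^s(S_s^\delta)$ transversely, and this is where the transversality of $\Gamma_f^0$ with $\mathbb T_{\rm TO}$ enters.
\item Transversality of $W^u(S_s^\delta)$ and $W^s(S_s^\delta)$ in $\{P=0,Q=0\}$ (from $D_1,D_2$) then gives a transverse crossing of $\mathrm{Fix}(\mathcal R)$ at the midpoint of the jump.
\end{itemize}
A transversal crossing of $\mathrm{Fix}(\mathcal R)$ near both ends is exactly the condition the implicit function theorem needs. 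Its output is a symmetric periodic orbit $\gamma^\delta_{\rm max}$ that is $\mathcal O(\delta^{2/5})$-close to $\gamma^0_{\rm max}$.

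The delicate points all sit in this step:
\begin{itemize}
\item the passage near $L$, which is covered by the blow-up of Section~\ref{sec:geodesing};
\item the exponentially small control required along $S_s^\delta$, which is covered by the Fenichel normal form;
\item the matching of the singular scales $\delta^{2/5}$ in \eqref{eq:distinguishedlimit} with the blow-up scales in \eqref{eq:blowup}.
\end{itemize}

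\textbf{Step 4 (period).} The fast excursion occupies a $y$-interval of length $2L\delta^{-1/5}$, by \eqref{eq:L} and the scaling \eqref{eq:distinguishedlimit}. In $x=\delta y$ this is $\mathcal O(\delta^{4/5}|\log\delta|)$. The passage through the blow-up region near $M_{\rm RFS}$ contributes $\mathcal O(\delta^{1/3})$ in $x$. The slow segments along $\Gamma_f^0$ and $\Gamma_t^0$ take an $\mathcal O(1)$ time in $x$, obtained by integrating $dW/d\zeta=-Q$ from $M_{\rm RFS}$ to $W_{\rm TOTD}$. Hence the period is given to leading order by the slow flight time along $\gamma^{\rm slow}_{\rm max}$, as claimed.
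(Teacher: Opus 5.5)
\textbf{The step that fails is the second crossing of $\mathrm{Fix}(\mathcal R)$ in Step~3.} Equivalently, it is your claim in Step~1 that $\gamma^0_{\rm max}$ meets $\mathrm{Fix}(\mathcal R)$ at the midpoint of $\gamma^{\rm fast}$. No refinement of the exchange-lemma or blow-up bookkeeping can fix it.

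Your half-orbit starts at $M_{\rm RFS}$ with $q=0$. For fixed $\lambda>0$ it stays in an $\mathcal O(\delta^{2/5})$-neighbourhood of the fold point: by \eqref{eq:distinguishedlimit} with $W_{\rm TOTD}\approx\tfrac14\lambda^{2/3}$, one has $|u|\le 2\delta^{2/5}\sqrt{W_{\rm TOTD}}$ and $v=\mathcal O(\delta^{4/5})$. In that neighbourhood
$q_y=\delta\bigl(-\tfrac{\lambda}{2}+\alpha_3u+\alpha_4v+\alpha_5u^2+\cdots\bigr)=\delta\bigl(-\tfrac{\lambda}{2}+\mathcal O(\delta^{2/5})\bigr)<0$.
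In the coordinates you use, \eqref{eq:UPWQ} gives $dQ/d\zeta=-\lambda/2$ on $S_s^0$ and $Q_\xi=-\tfrac12\delta^{2/5}\lambda+\mathcal O(\delta^{4/5})$ during the jump.

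So $q$ turns negative right after $M_{\rm RFS}$, as it must along $\Gamma_f^\delta$, and never returns to $\{q=0\}$. There is no second symmetric point, hence no base solution for the implicit function theorem. The same identity rules out every closed orbit in this region. Since $\oint q_y\,dy=0$, the expression $-\tfrac{\lambda}{2}+\alpha_3u+\alpha_4v+\cdots$ must change sign along any periodic orbit. To leading order the orbit must therefore reach $u<\lambda/(2\alpha_3)$, i.e.\ pass the slow equilibrium $E$ on the centre side. Consequently, for $\mathcal O(1)$ $\lambda>0$ no periodic orbit of \eqref{eq:y-ODE} lies near $\gamma^0_{\rm max}$ at all.

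This is a defect of the singular skeleton, not of your reversible-shooting route. That route is otherwise a reasonable alternative to the paper's argument, and it is what makes the problem visible. The paper instead checks the hypotheses of Theorem~2.11 of \cite{dRDR2016} and Theorem~2 of \cite{ST2001} and patches $M_{\rm RFS}$ by blow-up. That argument rests on the same $\gamma^0_{\rm max}$ and inherits the defect.

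You can see it in the paper's own numbers. The computed jump is $\Delta Q=2\Psi<0$, whereas going from $\mathbb T_{\rm TO}=\{Q=\Psi\}$ to $\mathbb T_{\rm TD}=\{Q=-\Psi\}$ in \eqref{eq:TOTD} requires $-2\Psi>0$. Swapping the two curves does not help: it puts the takeoff on $\Gamma_t^0$ and the touchdown on $\Gamma_f^0$, and then the slow flow would have to carry $Q$ from negative back to positive. So the curve of Definition~\ref{def:singpo} is not compatible with the direction of the flow. A local persistence result needs $\lambda$ small enough that the excursion can reach past $E$, e.g.\ $\lambda=\mathcal O(\delta)$ as at the Turing point \eqref{eq:lambdaT} and in the RFSN-II rescaling. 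Alternatively, it needs orbits with large excursions outside the fold neighbourhood.

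Independently, the scales in your Step~4 are off:
with $L$ from \eqref{eq:L}, the fast interval is $\mathcal O(\delta^{2/5})$ in $x$, not $\mathcal O(\delta^{4/5}|\log\delta|)$.
Since $x=\delta^{2/5}\zeta$, the slow segments are also only $\mathcal O(\delta^{2/5})$ in $x$, not $\mathcal O(1)$.
The passage through the blow-up region is $\mathcal O(\delta^{2/3})$ in $x$, because $x=\delta y=r_2^2y_2$. It is not $\mathcal O(\delta^{1/3})$; by your own estimate that would have dominated the slow segments.
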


\medskip
\begin{proof} 
The singular maximal spatially-periodic canard solutions constructed in the limit of $\delta = 0$ consist in alternation of a segment $\gamma^{\rm fast}$ along the singular limit of a homoclinic orbit in the transverse intersection of $W^u(S_s^\delta)$ and $W^s(S_s^\delta)$, and a segment $\gamma^{\rm slow}_{\rm max}$ along the distinguished orbits of the slow spatial system on $S_s^0$. 
In addition, the takeoff curve, which consists of the base points of the fast unstable fibers in the intersection of the invariant manifolds, transversely intersects the singular faux canard  on $S_s^0$ at the takeoff point $\mathbb{T}_{\rm TO} \cap \Gamma_f^0$.
Similarly, the touchdown curve, which consists of the base points of the fast stable fibers in the intersection, transversely intersects the singular true canard $\Gamma_t^0$ on $S_s^0$ at the touchdown point $\mathbb{T}_{\rm TD} \cap \Gamma_t^0$.
Furthermore, in neighborhoods of the takeoff and touchdown points, the flow on $S_s^0$ is transverse to $\mathbb{T}_{\rm TO}$ and $\mathbb{T}_{\rm TD}$, respectively.
Therefore, with one small modification, the hypotheses of Theorem 2.11 in \cite{dRDR2016} and Theorem 2 (with $p=2$) in \cite{ST2001} are satisfied, and each theorem directly gives the desired persistence result.

The need for the small modification arises from the fact that in the analyses of \cite{dRDR2016,ST2001} the slow segments of the singular periodic orbits  lie entirely on normally hyperbolic critical manifolds,
whereas here the slow segment $\gamma^{\rm slow}_{\rm max}$ contains one point --the folded singularity-- on $L$ at the boundary of the saddle sheet $S_s^0$ of the critical manifold, where normal hyperbolicity is lost.
However, this point lies outside of neighborhoods of the takeoff and touchdown points, and hence none of the key ingredients in the proofs are affected. 
In a small neighborhood of the folded singularity, one may use the method of geometric desingularization to blow up the vector field and extend the analysis used to track the invariant manifolds along the slow segments in Section 2.3 of \cite{dRDR2016} and Sections 3-5 of \cite{ST2001}.
We omit the details.

The statement about the periods of the persistent maximal canards follows from the fact that the length of the interval of $x$ values corresponding to $\gamma^{\rm slow}_{\rm max}$ is $\mathcal{O}(1)$ whereas the length of the interval corresponding to $\gamma^{\rm fast}$ is $o(1)$.
Hence, to leading order, the spatial period is exactly the length of the interval corresponding to $\gamma^{\rm slow}_{\rm max}$, and the length of the spatial interval over which the persistent maximal canard lies outside a neighborhood of $S_s^\delta$ is a higher order correction.
\end{proof} 

\medskip 
\begin{lemma}  
For each $0<\delta \ll 1$ and each persistent maximal spatially-periodic canard $\gamma^\delta_{\rm max}$, there is a one-parameter family of spatially-periodic canards with shorter periods that lie in the neighborhood of the maximal canard and that have spatial segments of $\mathcal{O}(1)$ in length near $S_s^\delta$ in phase space but not along $\Gamma_t^\delta$ and $\Gamma_f^\delta$.
\end{lemma}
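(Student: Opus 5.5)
We prove the lemma by repeating, for a one-parameter family of singular orbits, the singular-orbit-plus-persistence construction used for $\gamma^0_{\rm max}$. We look for singular periodic orbits whose slow segments lie on level curves of the slow flow on $S_s^0$ other than $\Gamma_t^0\cup\Gamma_f^0$. In the rectified $(W,Q)$ coordinates the slow flow is $W_\zeta=-Q$, $Q_\zeta=-\tfrac12\lambda$. It has the first integral $\mathcal{E}(W,Q)=\tfrac12 Q^2-\tfrac12\lambda W$. The singular canards are the level set $\mathcal{E}=0$. For $c>0$ small, the level curves $\mathcal{E}=c$, namely $Q^2=\lambda W+2c$, are reversible parabolas lying entirely inside $S_s^0$: they cross $\{Q=0\}$ at $W=-2c/\lambda$ after the shift to the appropriate sign convention, i.e.\ they stay in the normally hyperbolic region at a positive distance from the fold line $L$. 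These curves accumulate on $\Gamma_t^0\cup\Gamma_f^0$ as $c\to0$. We use $c$ (equivalently, the $W$ value of the turning point of the slow segment) as the parameter of the family.

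The first step is to intersect each level curve with the takeoff and touchdown curves \eqref{eq:TOTD}. We set $Q=\pm\Psi(W,\delta)$ and solve $\Psi(W,\delta)=-\sqrt{\lambda W+2c}$, as in \eqref{eq:intersectionequation-4-W}. At $c=0$ the root $W_{\rm TOTD}$ is simple and the intersection is transverse. By the implicit function theorem we therefore get a smooth root $W_{\rm TOTD}(c,\delta)$, still transverse, for all $|c|$ small. By the reversibility $\mathcal R$ the takeoff and touchdown points share this $W$ value. We then define the singular orbit $\gamma^0_c$ exactly as in Definition~\ref{def:singpo}: a slow segment along $\mathcal{E}=c$ from touchdown to takeoff, concatenated with the fast homoclinic segment in the plane $W=W_{\rm TOTD}(c,\delta)$.

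The second step is persistence. Because the slow segment of $\gamma^0_c$ with $c$ on the side away from $L$ lies uniformly inside the normally hyperbolic part of $S_s^0$, the hypotheses of Theorem~2.11 of \cite{dRDR2016} and Theorem~2 of \cite{ST2001} hold verbatim. The required facts are the transverse intersection of $W^u(S_s^\delta)$ and $W^s(S_s^\delta)$ from step 1 of Section~\ref{sec:proof-theorem2}, the transverse intersection of the slow orbit with $\mathbb T_{\rm TO}$ and $\mathbb T_{\rm TD}$, and the transversality of the slow flow to these curves. No blow-up modification is needed here, unlike the maximal case. This gives a periodic orbit $\gamma^\delta_c$ for each fixed $c$ in a compact set, for $\delta<\delta_0(c)$.

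The period is, to leading order, the slow travel time along $\mathcal{E}=c$ between touchdown and takeoff. We compute it from $Q_\zeta=-\tfrac12\lambda$: it equals $4\bigl(\sqrt{\lambda W_{\rm TOTD}+2c}-\sqrt{2c}\bigr)/\lambda$ up to sign conventions. This is strictly smaller than the maximal period obtained at $c=0$ when the level curve stays off the fold, and the slow segment has $\mathcal O(1)$ length. By construction it is not along $\Gamma^\delta_{t,f}$.

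The main obstacle is uniformity as $c\to0$. To obtain a genuine family "in the neighborhood of the maximal canard", the persistence must hold for $c\in(0,c_0]$ with $\delta_0$ independent of $c$. Near $c=0$ the slow segment approaches $M_{\rm RFS}$, where normal hyperbolicity fails. We would handle this by matching to the blow-up of Section~\ref{sec:geodesing}. In chart $K_2$ the orbits $\mathcal{E}=c$ with $c=\mathcal{O}(\delta^{2/3})$ correspond to nearby solutions of \eqref{eq:K2}. The transverse intersection $W^c({\bf p}_-)\cap W^c({\bf p}_+)$ of Lemma~\ref{lem-persistence-true+fauxcanards-RFS} then gives, via exponential closeness to $\Gamma^\delta_{t,f}$, orbits that pass near but not along the maximal canards. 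Alternatively, we would simply state the family for $c\ge c_1\delta^{2/3}$, since that suffices for the lemma as stated.
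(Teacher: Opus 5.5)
Your route is the paper's, with one reparametrization. The paper fixes a height $W_1>W_{\rm TOTD}$, takes the points of $\mathbb{T}_{\rm TO}$ and $\mathbb{T}_{\rm TD}$ at that height, and joins them by the slow orbit segment lying above $\Gamma_t^0\cup\Gamma_f^0$ and by the fast homoclinic in $\{W=W_1\}$. It then applies \cite{dRDR2016,ST2001} without modification. You parametrize by the level $c$ of $\mathcal{E}=\tfrac12Q^2-\tfrac12\lambda W$ instead, which is equivalent because $c(W_1)=\tfrac12\Psi(W_1,\delta)^2-\tfrac12\lambda W_1$ is strictly decreasing with $c(W_{\rm TOTD})=0$.

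\textbf{The sign of $c$ is wrong, and as written the construction fails.} For $c>0$ the curve $Q^2=\lambda W+2c$ has $|Q|\ge\sqrt{2c}$ on $\{W\ge 0\}$, so it never reaches $\{Q=0\}$ inside $S_s^0$. It splits into two disjoint branches, each ending on $L$ at a regular fold point $Q=\pm\sqrt{2c}\neq 0$, where the reduced flow \eqref{eq:reduced} is singular. So no slow orbit joins the touchdown point to the takeoff point. Your period formula with the $-\sqrt{2c}$ correction is exactly the travel time along this broken two-branch curve. These $c>0$ curves correspond to heights $W_1<W_{\rm TOTD}$, the wrong side.

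The correct family is $c<0$. Then $Q^2=\lambda W-2|c|$ turns around at $W=2|c|/\lambda>0$, at positive distance from $L$. It meets $\mathbb{T}_{\rm TO}$ and $\mathbb{T}_{\rm TD}$ at a height $W(c)>W_{\rm TOTD}$. Conversely, at any $W_1>W_{\rm TOTD}$ one has $\Psi(W_1)^2<\Psi(W_{\rm TOTD})^2=\lambda W_{\rm TOTD}<\lambda W_1$, which forces $c<0$. The leading-order slow travel time is $4\sqrt{\lambda W(c)+2c}/\lambda=4|\Psi(W(c),0)|/\lambda$. This is smaller than the maximal value $4\sqrt{\lambda W_{\rm TOTD}}/\lambda$ because $|\Psi(\cdot,0)|$ is decreasing, which is where ``shorter periods'' comes from. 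With $c<0$, the rest of your argument (implicit function theorem at the intersection, reversibility giving equal heights, and persistence for fixed $c$ without blow-up) coincides with the paper's.

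\textbf{On your ``main obstacle''.} The paper does not address uniformity as $W_1\downarrow W_{\rm TOTD}$. It treats each fixed $W_1$, equivalently each fixed $c<0$, and restricting $c$ to a compact subset of the negative range already gives a single $\delta_0$ and a one-parameter family. That is all the lemma's proof delivers.

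Your fallback of letting $|c|$ go down to a $\delta$-dependent threshold does not follow from \cite{dRDR2016,ST2001}. Their $\delta_0$ deteriorates as the slow segment approaches $L$, so the fallback would need the blow-up matching you only sketch. Also, in the variables of \eqref{eq:UPWQ} the chart-$K_2$ regime corresponds to $|c|\sim\delta^{8/15}$, not $\delta^{2/3}$.
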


\medskip
\begin{proof} 
For each $\mathcal{O}(1)$ value of $W_1 > W_{\rm TOTD}$,
one can construct a singular (non-maximal) spatially-periodic orbit $\gamma^0_{W_1}$, as follows.
Take the points on $\mathbb{T}_{\rm TO}$ and $\mathbb{T}_{\rm TD}$ with $W=W_1$.
Let $\gamma^{\rm fast}$ denote the singular limit in the plane $W=W_1$ of the homoclinic orbit in the intersection of $W^u(S_s^\delta)$ and $W^s(S_s^\delta)$ that connects these points on $\mathbb{T}_{\rm TO}$ and $\mathbb{T}_{\rm TD}$.  
Let $\gamma^{\rm slow}_{W_1}$ denote the segment of the slow flow on $S_s^0$ between the two points.
(This is a hyperbolic orbit segment of the slow flow that lies above $\Gamma_t^0$ and $\Gamma^0_f$.)
Then, $\gamma^0_{W_1} = \gamma^{\rm slow}_{W_1} \cup\gamma^{\rm fast}$ is a singular (non-maximal) spatially-periodic orbit.
The persistence of these singular (non-maximal) spatially-periodic canards as periodic solutions of \eqref{eq:y-ODE} follows directly from Theorem 2.11 in \cite{dRDR2016} and Theorem 2 in \cite{ST2001}, without any modification, because the slow segments $\gamma^{\rm slow}_{\rm max}$ lie entirely on $S_s^0$.
\end{proof} 

\medskip
\noindent
This completes the proof of Theorem~\ref{thm:2}.
$\Box$.

\medskip

\begin{remark}
    The fast segment $\gamma^{\rm fast}$ that lies in the singular limit of $W^u(S_s^\delta) \cap W^s(S_s^\delta)$ and that connects $\mathbb{T}_{\rm TO}$ and $\mathbb{T}_{\rm TD}$ is parametrized as follows,
    \begin{equation}\label{eq:gammafast}
    \tilde{\gamma}(\xi;\delta)
    = \begin{bmatrix}
          u_{\rm H} (\xi; \tfrac{1}{4} \lambda^{2/3}) + \delta^{2/5} \tilde{u}_1 (\xi)  + \mathcal{O}(\delta^{4/5}) \\
          p_{\rm H} (\xi; \tfrac{1}{4}\lambda^{2/3}) + \delta^{2/5} \tilde{p}_1 (\xi) + \mathcal{O}(\delta^{4/5}) \\
          \tfrac{1}{4}\lambda^{2/3} + \delta^{2/5}  \tfrac{1}{3}(2\lambda^{2/3} \log(2) - \alpha_3) + \mathcal{O}(\delta^{4/5}) \\
              - \tfrac{1}{2} \lambda \xi + \mathcal{O}(\delta^{4/5})
    \end{bmatrix}.
    \end{equation}
    This is derived, as follows.
    The solution satisfies the boundary conditions $\tilde{\gamma}(-L,\delta) \in W^u(S_s^\delta)$ and $\tilde{\gamma}(L;\delta) \in W^s(S_s^\delta)$.
    We expand it in an asymptotic expansion,
    $\tilde{\gamma}(\xi,\delta) = \tilde{\gamma}_0(\xi) + \delta^{2/5} \tilde{\gamma}_1(\xi) + \mathcal{O}(\delta^{4/5})$.
    The boundary conditions become
    \begin{equation*}
        \tilde{\gamma}_0 (\pm L) 
        = \begin{bmatrix}
              u_{\rm H} (\pm L; W_{\rm TOTD}(0)) \\
              p_{\rm H} (\pm L; W_{\rm TOTD}(0)) \\
              W_{\rm TOTD}(0) \\
              0
        \end{bmatrix}
        \quad
        \tilde{\gamma}_1(\xi) =
        \begin{bmatrix}
            \tilde{u}_1 (\pm L) \\
            \tilde{p}_1 (\pm L) \\
            \left( \tfrac{d}{d \delta^{2/5}} W_{\rm TOTD}(\delta) \right) \left. \right\vert_{\delta=0} \\
            \mp q_{\rm TOTD}(\delta)
        \end{bmatrix}.
    \end{equation*}
    Here, $W_{\rm TOTD}(\delta)$ is given by \eqref{eq:WTOTD},
    $q_{\rm TOTD}(\delta)=\sqrt{\lambda W_{\rm TOTD}(\delta)}$,
    and $(\tilde{u}_1,\tilde{p}_1)$ denotes the solution of
    \eqref{eq:delta25sys} with boundary conditions
    $(\tilde{u}_1(0), \tilde{p}_1(0), \tilde{v}_1(0), \tilde{q}(0))=(0,0,w_0,q_0)$.
\end{remark}

\begin{remark}
    The geometric construction performed in this section for all $\mathcal{O}(1)$ values of $\lambda>0$ can be extended to the regime in which $0<\lambda\ll 1$, where the folded singularity is an RFSN-II.
    We refer to Section 7 of \cite{JDKV2026} for an example in which the true and faux canards of the RFSN-II point and the takeoff and touchdown curves are presented for the Brusselator model.
\end{remark}

\section{An example: The classical Gierer-Meinhardt PDE} \label{sec:GM-analysis}

In this section, we study the classical, non-dimensional Gierer-Meinhardt PDE as a prototypical example of the PDEs \eqref{eq:genAI-UV} with a non-degenerate fold point.
The conditions for the RFSN-II and Turing bifurcations are identified,
and the results of numerical simulations of attracting spatially-periodic canard solutions of the full PDE model are shown to illustrate Theorem~\ref{thm:2}.

\subsection{The RFSN-II point and the spatially-periodic canards created in Turing bifurcations in the classical Gierer-Meinhardt PDE}

In the limit of small activator diffusivity, the classical Gierer-Meinhardt equations are
\begin{equation}\label{eq:GM-nondim}
        \partial_t U = \delta^2 \partial_x^2 U + \frac{U^2}{V} - \gamma_1 U + \gamma_2, \hskip0.3truein
        \partial_t V = \partial_x^2 V + U^2 - V.
\end{equation}
The non-dimensional parameters satisfy $\gamma_1, \gamma_2 > 0$ and $0< \delta \ll 1$.
See especially Section 14.2 of \cite{M1989}, as well as \cite{GM-orig,Me1982} (where we take $\rho_h=0$) and \cite{EK2005,W1997}, where more generally $0<\delta < 1$.

The PDE \eqref{eq:GM-nondim} is of the form \eqref{eq:genAI-UV} with 
$F(U,V)= \frac{U^2}{V} -\gamma_1 U + \gamma_2$
and $G(U,V)=U^2 - V$.
The null set $\{ F=0 \}$ is given by the graph of $V_0(U) = U^2 (\gamma_1 U - \gamma_2)^{-1}$.
We are interested in that part of the null set with $U > \frac{\gamma_2}{\gamma_1}$.
The non-degenerate fold point on this curve is
$(U_f,V_f)=\left(\frac{2\gamma_2}{\gamma_1},\frac{4\gamma_2}{\gamma_1^2}\right)$.
Also, the point 
$(U_e,V_e)
=\left(\frac{\gamma_2 + 1}{\gamma_1},
\left( \frac{\gamma_2 + 1}{\gamma_1} \right)^2 \right)$ 
is an equilibrium.

Putting the model \eqref{eq:GM-nondim} into the general form \eqref{eq:genAI}, we find
$\frac{1}{2}\lambda = \frac{\gamma_1^2}{4}(1-\gamma_2)$,
$\alpha_1 = -\frac{4}{\gamma_1}$,
$\alpha_2=0$,
$\alpha_3 = -\gamma_1 \gamma_2$,
$\alpha_4=1$, 
and $\alpha_5=\gamma_2$.
We see that $\alpha_3<0$, because $\gamma_1, \gamma_2>0$.
Hence, in the local form \eqref{eq:genAI}, the classical Gierer-Meinhardt model is expressed as
\begin{equation}
    \begin{split}
       \partial_t u &= \delta^2 \partial_x^2 u - v - u^2 + \frac{4}{\gamma_1}uv + \mathcal{O}(v^2,u^2 v), \\
       \partial_t v &= \partial_x^2 v + \frac{\gamma_1^2}{4}(1-\gamma_2) + \gamma_1\gamma_2 u - v - \gamma_2 u^2.
    \end{split}
\end{equation}
In the $u$ equation, the higher order terms are $\mathcal{O}(v^2,u^2v)$, from expanding the nonlinear term.
In the $v$ equation, there are no higher-order terms, because $G$ only has terms that are quadratic in $U$ and linear in $V$.
Furthermore, the fold point is now at $(u_f,v_f)=(0,0)$ for all parameter values,
and the equilibrium is exactly at $(u_e,v_e)=\left(-\frac{\gamma_1}{4\gamma_2}(1 - \gamma_2),-\frac{\gamma_1^2}{16\gamma_2}(1-\gamma_2)^2\right)$.

The Turing bifurcation occurs at 
\begin{equation} \label{eq:gamma_2T-GM}
\begin{split} 
{\gamma_2}_T = \frac{\gamma_1 - 2\delta \sqrt{\gamma_1} - \delta^2}{(\sqrt{\gamma_1}+\delta)^2}, \qquad 
k_T^2 = \frac{\sqrt{\gamma_1}}{\delta}.
\end{split}
\end{equation}
(See Appendix~\ref{app:GM-Turing} for the derivation.)
Hence, $\gamma_{2T} \sim 1 - \frac{4\delta}{\sqrt{\gamma_1}}$ as $\delta \to 0$.
Similarly, in $\lambda$,
the Turing bifurcation occurs at 
$\lambda_T= \frac{\delta \gamma_1^2 (2\sqrt{\gamma_1}+\delta)}{(\sqrt{\gamma_1} + \delta)^2}$.
Hence, $\lambda_T \sim 2 \delta \gamma_1^{3/2}$ as $\delta \to 0$.

Then, in the desingularized reduced system of the spatial ODEs, the linear stability types of the folded singularity $M: (u,q)=(0,0)$ and the equilibrium $E: (u,q)=(u_e,0)$ are
\begin{equation}\label{eq:GM-classifyM}
    \begin{split}
      0 \le \gamma_2<1 &: M = {\rm RFS}, \quad E = {\rm center} \\
      \gamma_2 = 1 (\lambda=0) &: M = \text{RFSN-II}, \quad E = M \\
      \gamma_2>1 &: M = {\rm RFC}, \quad E = {\rm saddle},
  \end{split}
\end{equation}
recall Section~\ref{sec:MRFS+MRFSNII}.

A critical observation is that ${\gamma_2}_T$ is asymptotically close in $\delta$ to $\gamma_2=1$, where the RFSN-II singularity occurs, {\it i.e.,} $\lambda_T$ is asymptotically close to $\lambda=0$.
Therefore, also in the classical Gierer-Meinhardt PDE, the spatial ODEs have an RFSN-II singularity asymptotically close to the Turing bifurcation, just as in the Brusselator and van der Pol PDEs, recall \cite{JDKV2026,VDK2025}.  
(For reference, we note also that the eigenvalues of the fourth-order spatial ODE system at the 1:1 resonant Hopf point are $\omega=\sqrt{\delta}\gamma^{1/4}$, recall \eqref{eq:omega-NF}.)

In the $(\gamma_2,k)$ plane (see Figure~\ref{fig:gmexistence}), the homogeneous state $(U_e,V_e)$ is linearly unstable in the red-shaded region, which is bounded by the red curve. 
The Turing point is located at the tip.
Spatially-periodic solutions exist in the region (existence balloon) bounded by the black curve.

\begin{figure}[ht!]
  \centering
  \includegraphics[width=3.5in]{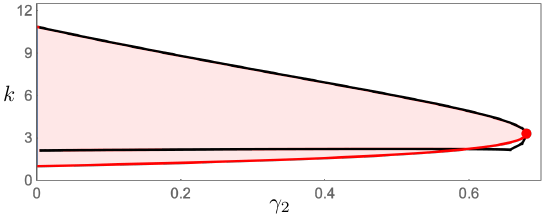}
  \caption{
  Existence balloon of the steady state ODEs of the Gierer-Meinhardt system \eqref{eq:GM-nondim} in the $(\gamma_2,k)$ parameter plane for $\gamma_1=0.3$ and $\delta=0.05$. 
  Here, $k = \tfrac{2\pi}{\delta X}$ where $X$ is the spatial period of the steady state ODEs.
  The black curve encloses the region where spatially-periodic solutions exist. 
  The red curve is the linear stability boundary for the homogeneous equilibrium state, and its tip (red dot) at $(\gamma_{2T},k_T) \approx (0.6794,3.336)$ is the Turing bifurcation. 
  Thus, the homogeneous equilibrium is unstable in the red-shaded region inside the red curve and stable in the region outside the red curve. Note that the upper red and upper black curves coincide.
  The coefficient $b$ that determines the criticality of the Turing bifurcation (see \eqref{eq:HI-nf-b}) is $b \approx 9.26475 \times 10^{-9}$, so that the Turing bifurcation is supercritical.
 The solutions here were obtained using numerical continuation, with the same method as in \cite{JDKV2026}.
  }
  \label{fig:gmexistence}
\end{figure}

The Turing bifurcation \eqref{eq:gamma_2T-GM} can be either subcritical or supercritical, depending on the parameters.
The exact formula for the coefficient that determines the criticality is 
\[
    b = -\tfrac{16}{9}\left( \tfrac{1}{2} \left( \sqrt{\gamma_1}+\delta \right) \right)^{12} \left( 2\gamma_1^2 - 23 \gamma_1 \sqrt{\gamma_1} \delta - 16 \gamma_1 \delta^2+153 \sqrt{\gamma_1} \delta^3+36\delta^4 \right). 
\]
This can be factored as
\begin{equation}\label{eq:HI-nf-b}
b = -\tfrac{32}{9} \left( \tfrac{1}{2} \left( \sqrt{\gamma_1}+\delta \right) \right)^{12} \left( \sqrt{\gamma_1}-\beta_1 \delta \right) \left( \sqrt{\gamma_1}-\beta_2 \delta \right)\left( \sqrt{\gamma_1}-\beta_3 \delta \right)\left( \sqrt{\gamma_1}-\beta_4 \delta \right), 
\end{equation} 
where the coefficients $\beta_1, \beta_2, \beta_3$, and $\beta_4$ are given by
\begin{equation*}
  \begin{split}
      \beta_1 &= \frac{1}{8} \left( 23 - H(\theta) - \sqrt{\frac{2}{3} \left( 1,843-8\sqrt{11,677} \cos \theta - 30,645\, H(\theta)^{-1} \right)} \right), \\ 
      \beta_2 &= \frac{1}{8} \left( 23 - H(\theta) + \sqrt{\frac{2}{3} \left( 1,843-8\sqrt{11,677} \cos \theta - 30,645\, H(\theta)^{-1} \right)} \right), \\
      \beta_3 &= \frac{1}{8} \left( 23 + H(\theta) - \sqrt{\frac{2}{3} \left( 1,843-8\sqrt{11,677} \cos \theta + 30,645\, H(\theta)^{-1} \right)} \right), \\
      \beta_4 &= \frac{1}{8} \left( 23 + H(\theta) + \sqrt{\frac{2}{3} \left( 1,843-8\sqrt{11,677} \cos \theta + 30,645\, H(\theta)^{-1} \right)} \right).
  \end{split}
\end{equation*}
Here, we have defined $\theta := \tfrac{1}{3} \tan^{-1} \left( \frac{162\sqrt{43,402,757}}{673,145} \right)$ and $H(\theta) = \sqrt{\frac{1}{3}\left( 1,843+16\sqrt{11,677}\cos \theta \right)}$.
Numerically, we find that 
\[ \beta_1 \approx -2.52731, \quad \beta_2 \approx -0.23159, \quad \beta_3 \approx 2.64888, \quad \text{ and } \quad \beta_4 \approx 11.61002. \]
Thus, the Turing bifurcation is subcritical for $\gamma_1 \in \left[0, \beta_3^2 \delta^2 \right) \cup \left(\beta_4^2\delta^2,\infty \right)$ and supercritical for $\gamma_1 \in \left( \beta_3^2 \delta^2 , \beta_4^2 \delta^2 \right)$.

\subsection{Stable spatially-periodic canard solutions observed in PDE simulations of the classical Gierer-Meinhardt model \texorpdfstring{\eqref{eq:GM-nondim}}{Lg}}\label{sec:GM-num} 

We present the results of numerical simulations of spatially-periodic canard solutions in the Gierer-Meinhardt model \eqref{eq:GM-nondim} to illustrate Theorem~\ref{thm:2}. 
We used the Method of Lines code in Matlab.

\begin{figure}[ht!]
    \centering
    \includegraphics[width=5in]{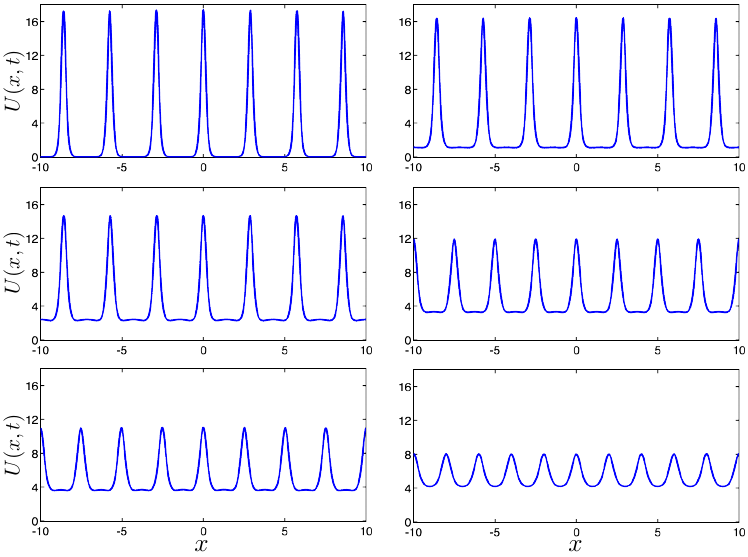}
    \put(-362,260){(a)}
    \put(-182,260){(b)}
    \put(-362,170){(c)}
    \put(-182,170){(d)}
    \put(-362,82){(e)}
    \put(-182,82){(f)}
    \caption{The $U$ component of a spatially-periodic attractor of the Gierer-Meinhardt model \eqref{eq:GM-nondim} as a function of $x$ for $\gamma_1=0.3$ and $\delta=0.05$, with (a) $(\gamma_2,k_0) = (0,8)$, (b) $(\gamma_2,k_0) = (0.28,8)$, (c) $(\gamma_2,k_0) = (0.49,8)$, (d) $(\gamma_2,k_0)=(0.61,5)$, (e) $(\gamma_2,k_0) = (0.64,5)$, and (f) $(\gamma_2,k_0) = (0.67,5)$. 
    The profiles are shown at time $t=15,000$, which is well after the solutions have reached steady state.
    In each simulation, the initial data is a small spatially-periodic perturbation of the homogeneous equilibrium state, {\it i.e.}, $(U(x,0),V(x,0)) = \left( U_e+0.1 \cos^8\left( \tfrac{k_0}{8} x \right), V_e + 0.1 \sin^4 \left( \tfrac{k_0}{4}x \right) \right)$.
    In all simulations, the spatial wave numbers $k$ of the attractors lie in the existence balloon for the corresponding value of $\gamma_2$.
    For reference, we add that the Turing bifurcation at $\gamma_{2T} \approx 0.679392$ is supercritical, recall \eqref{eq:gamma_2T-GM} and \eqref{eq:HI-nf-b}.
    The solutions presented here and in Figures~\ref{fig:gmvarydelta} and \ref{fig:gmheat} were obtained using direct numerical simulations of the PDE \eqref{eq:GM-nondim} in Matlab, with a second-order finite difference for the spatial derivatives and an implicit fourth-order Runge-Kutta scheme for the time-stepping. 
    }
    \label{fig:gmvarygamma2}
\end{figure}

In Figure~\ref{fig:gmvarygamma2}, we show a sequence of six stable spatially-periodic canard solutions --whose existence is established by Theorem~\ref{thm:2}-- that are observed for different values of the bifurcation parameter $\gamma_2$.
Here, the other system parameters, $\gamma_1$ and $\delta$, are fixed, and the Turing bifurcation is supercritical.
Each spatially-periodic canard solution has an interval along which $U(x)$ and $V(x)$ vary gradually in space, in alternation with a narrower interval on which the $U(x)$ component exhibits a narrow, steep pulse.
These spatial canards are created by the folded singularity $M_{\rm RFSN-II}$ for parameters near the Turing point and by $M_{\rm RFS}$ away from it.
The interval on which $U(x)$ and $V(x)$ exhibit gradual variation, which  includes a local maximum midway between adjacent pulses, corresponds to the interval along which the solution is near the true canard and then the faux canard of a folded singularity, either $M_{\rm RFS}$ or $M_{\rm RFSN-II}$ depending on parameters, recall Figure~\ref{fig:pulseconstruction}.

As $\vert \gamma_2 - \gamma_{2T} \vert$ grows, the amplitudes of the spatially-periodic canard attractors grow much more rapidly than the
standard square root of the distance in parameter space, which is the case for regular Turing bifurcations where the oscillations remain sinusoidal to leading order \cite{BDHL2023,E1965,EK2005,EP1998,GS1997,HI2011,IMD1989,M1989,S2003,T1952,W1997}. 
For example, even with the parameter $\gamma_2$ differing by only one part in a hundred from $\gamma_{2T}$, the amplitude is already two (see  Figure~\ref{fig:gmvarygamma2}(f)).
Furthermore, the amplitudes of the pulses are $\mathcal{O}(1/\delta)$ when $\gamma_2$ is taken to be $\mathcal{O}(1)$ away from $\gamma_{2T}$ (see Figure~\ref{fig:gmvarygamma2}(a)).

\begin{figure}[ht!]
    \centering
    \includegraphics[width=5in]{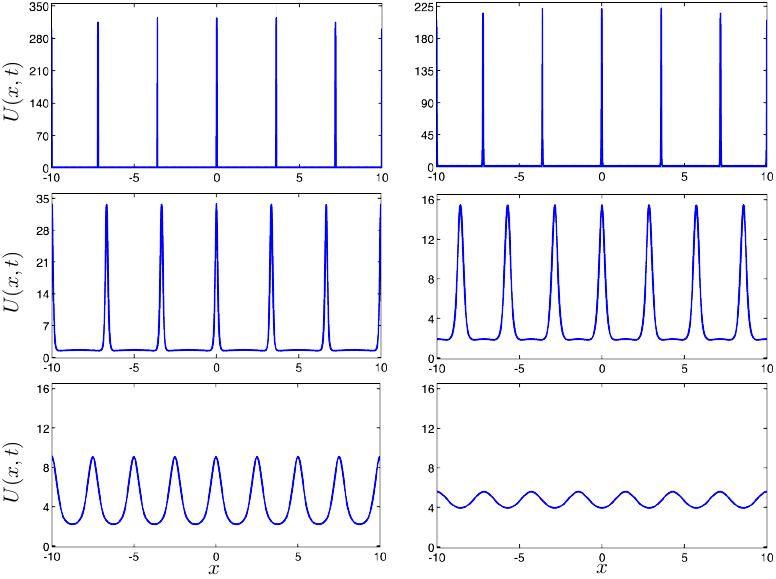}
    \put(-362,260){(a)}
    \put(-182,260){(b)}
    \put(-362,172){(c)}
    \put(-182,172){(d)}
    \put(-362,82){(e)}
    \put(-182,82){(f)}
    \caption{The effect of increasing $\delta$.
    Snapshots of the spatial attractor taken at time $t=150,000$ for $\gamma_1 = 0.3$ and $\gamma_2 = 0.42$ with (a) $\delta = 0.0001$,
    (b) $\delta = 0.005$,
    (c) $\delta = 0.025$,
    (d) $\delta = 0.05$,
    (e) $\delta = 0.075$, and
    (f) $\delta = 0.1$.
    The vertical scales in panels (a)--(c) shrink dramatically as $\delta$ is increased. 
    The vertical scales in panels (d)--(f) are identical to better highlight the transition from slow/fast spatially-periodic variation to the classical sinusoidal oscillations as $\delta$ is increased.
   Based on \eqref{eq:HI-nf-b}, the values of $b$ are
(a) $b \approx -6 \times 10^{-8}$,
(b) $b \approx -5.7 \times 10^{-8}$,
(c) $b \approx -4.5 \times 10^{-8}$,
(d) $b \approx 9.3 \times 10^{-9}$,
(e) $b \approx 1.4 \times 10^{-7}$, and
(f) $b \approx 3.8 \times 10^{-7}$.
Also, we observe that $b$ changes sign at $\delta \approx 0.047177$ and at $\delta \approx 0.206775$.
Thus, the Turing bifurcation is sub-critical in frames (a)-(c),
and it is super-critical in frames (d)-(f).
}
    \label{fig:gmvarydelta}
\end{figure}

In Figure~\ref{fig:gmvarydelta}, we show the spatial profiles of $U(x,t)$ for the stable, steady-state, spatially-periodic canards for six values of $\delta$.
Here, $\gamma_1$ and $\gamma_2$ are held fixed, and the initial data is of the same type as in Figure~\ref{fig:gmvarygamma2}.
The attractors have a similar fast/slow spatial structure as in Figure~\ref{fig:gmvarygamma2}.

In Figures~\ref{fig:gmvarydelta} (a)--(c), the values of $\delta$ are such that the Turing bifurcations are subcritical.
In the attractors, the amplitudes of the pulses are of size 
$\mathcal{O}(1/\sqrt{\delta})$, and they are observed numerically to lie on a branch of stable periodic canard solutions that meets at a fold point with the branch of unstable spatially-periodic canards that emerges from the sub-critical Turing bifurcation. 
The parameter dependence here is qualitatively similar to that observed for the spatially-periodic canards that emerge from the sub-critical Turing bifurcations in the Brusselator and van der Pol PDEs \cite{JDKV2026,VDK2025}. 

By contrast, in Figures~\ref{fig:gmvarydelta} (d)--(f), the values of $\delta$ are such that the Turing bifurcations are super-critical.
These attractors lie on the branch of stable spatially-periodic canard solutions that emerges from the Turing bifurcation.
Also here, the amplitudes of the spatial oscillations grow much more rapidly than the square root of the distance between the parameter value and the Turing point, similar to what was observed in Figure~\ref{fig:gmvarygamma2}(f).

\begin{figure}[ht!]
    \centering
    \includegraphics[width=5in]{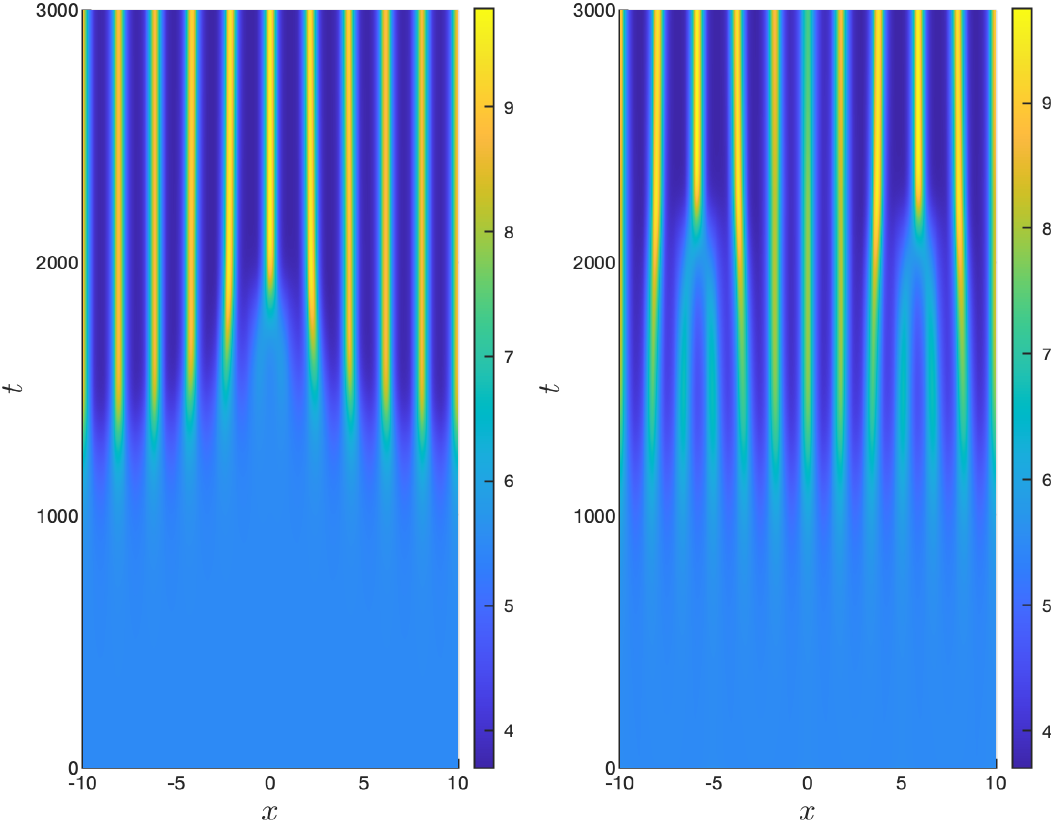}
    \put(-364,274){(a)}
    \put(-182,274){(b)}
    \caption{Space-time evolution of $U(x,t)$ in the Gierer-Meinhardt model \eqref{eq:GM-nondim} with $\gamma_1=0.3$, 
    $\gamma_2=0.65$, and $\delta = 0.05$. 
    The initial profile, $(U(x,0),V(x,0)) = \left( U_e+0.1 \cos^8\left( \tfrac{k_0}{8} x \right), V_e + 0.1 \sin^4 \left( \tfrac{k_0}{4}x \right) \right)$, has a spatial wavenumber of (a) $k_0 = 2$ and (b) $k_0 = 5$. 
    At steady state, the attractor has spatial period 2 and spatial wave number $\pi$ for both simulations.
    This attractor lies inside the existence balloon, recall Figure~\ref{fig:gmexistence}.
    }
    \label{fig:gmheat}
\end{figure}

Finally, in Figure~\ref{fig:gmheat}, we show the temporal evolution of two different initial data, which consist of small-amplitude, spatially-periodic perturbations of the homogeneous state $(U_e,V_e)$, just as in Figures~\ref{fig:gmvarygamma2} and \ref{fig:gmvarydelta}.
The simulations were run to time $t=15,000$.
In frame (a), the steady state was already reached before $t=3,000$, and in frame (b) it is reached at a later time, but still well before the end of the simulation (not shown).
While the initial data have different wave numbers, which lie outside the existence balloon,
both solutions are in the basin of attraction of --and evolve to-- the same spatially-periodic canard solution with period 2, which lies inside the existence balloon.

During the evolution toward steady state, the amplitudes of the local extrema in these solutions grow in time.
Also, some pulses disappear, typically with every other pulse disappearing and with the disappearances occurring first among the central pulses.
Similar amplitude growth and pulse disappearance are observed for other initial spatially-periodic initial data and for more general, small-amplitude initial data, with the same parameter values.
In addition, pulse creation is observed when the initial data has fewer local extrema than the attractor.

\begin{remark}
System \eqref{eq:GM-nondim}
is the classical Gierer-Meinhardt model in non-dimensionalized form. 
It may be derived from the dimensional form,
$
        \partial_{\hat{t}} \hat{U} = \delta^2 \partial_{\hat{x}}^2 \hat{U} + \rho \frac{\hat{U}^2}{\hat{V}} - \mu_a \hat{U} + \rho_a,  \hskip0.25truein 
        \partial_{\hat{t}} \hat{V} = \partial_{\hat{x}}^2 \hat{V} + \rho \hat{U}^2 - \mu_h \hat{V},
$
by setting 
$\hat{U}=U$,
$\hat{V}=\frac{\rho}{\mu_h} V$,
$\hat{x}=\frac{1}{\sqrt{\mu_h}}x$, and 
$\hat{t}=\frac{1}{\mu_h} t$.
\end{remark}

\begin{remark}
For the generalized Gierer-Meinhardt system, the first terms in $F$ and $G$ are $\frac{U^r}{V^s}$ and $\frac{U^t}{V^\omega}$, respectively, with the four powers being non-negative.
The generalized Gierer-Meinhardt system can also exhibit Turing bifurcations, and the methods of this article can be applied when the diffusivity of $U$ is small.
(Note that the classical Gierer-Meinhardt model corresponds to the case of $r=2, s=1, t=2$, and $\omega=0$.)
\end{remark} 

\noindent
{\bf Acknowledgments.}
R.J. was supported in part by a research account at Boston University.
T.K. thanks the organizers Montie Avery, Gregory Faye, Ryan Goh, Matthew Holzer, Gabriela Jaramillo, Merlin Pelz, and Qiliang Wu of the conference ``Order and function in complex systems," held at the University of Minnesota in August 2026, for their invitation to speak on these results.

\appendix

\section{The proof of Proposition \texorpdfstring{\ref{prop1}}{Lg}}\label{app:prop1} 

In this appendix, we present the proof of Proposition~\ref{prop1}.
Define 
\begin{equation*}
a_{ij} = \frac{1}{i!j!} \frac{\partial^{i+j} F}{\partial U^i \partial V^j}(U_f,V_f), \ \ {\rm where} \, \,  i,j=0,1,2,\dots,
\end{equation*} 
\begin{equation*} 
b_{ij} = \frac{1}{i!j!} \frac{\partial^{i+j} G}{\partial U^i \partial V^j}(U_f,V_f), \ \ {\rm where} \, \,  i,j=0,1,2,\ldots.
\end{equation*} 
Observe that $a_{00}, a_{10} = 0$  
and $a_{01}, a_{20} \ne 0$,
since $(U_f,V_f)$ is a non-degenerate fold point.

First, translate the fold point to the origin by setting $U = U_f + \tilde{U}$ and $V = V_f + \tilde{V}$ and Taylor expanding $F$ and $G$ about $(U_f,V_f)$.
This transforms the system \eqref{eq:genAI-UV} into 
\begin{equation*}
\begin{split}
    \partial_{t}\tilde{U} &= \delta^2 \partial^2_x \tilde{U} + a_{01} \tilde{V} + a_{20} {\tilde{U}}^2 + a_{11} \tilde{U}\tilde{V} + a_{02} {\tilde{V}}^2 + a_{30} {\tilde{U}}^3 + a_{21} {\tilde{U}}^2{\tilde{V}} + a_{12} {\tilde{U}}{\tilde{V}}^2 + a_{03} {\tilde{V}}^3 + \mathcal{O}(4), \\
    \partial_{t}\tilde{V} &= \partial^2_x \tilde{V} + b_{00} + b_{10}\tilde{U} + b_{01} \tilde{V} + b_{20} {\tilde{U}}^2 + b_{11} \tilde{U}\tilde{V} + b_{02} {\tilde{V}}^2 
    + b_{30} {\tilde{U}}^3 + b_{21} {\tilde{U}}^2{\tilde{V}} + b_{12} {\tilde{U}}{\tilde{V}}^2 + b_{03} {\tilde{V}}^3 + \mathcal{O}(4).
\end{split}
\end{equation*}
Here, the terms in $\mathcal{O}(k)$ vanish at least as fast as $\tilde{U}^k, \tilde{U}^{k-1}\tilde{V}, \ldots, \tilde{U} \tilde{V}^{k-1}, \tilde{V}^k$, as $(U,V)\to (0,0)$.

Then, scale the dependent variables by setting $u=-a_{20}\tilde{U}$ and $v=a_{01}a_{20} \tilde{V}$.
Locally, in a neighborhood of the fold point, the system of PDEs transforms into \eqref{eq:genAI} 
with parameters
\begin{equation} \label{eq:AppA-alpha_i}
\alpha_1 = \frac{-a_{11}}{a_{01}a_{20}}, 
\ \ \ 
\alpha_2 = \frac{-a_{30}}{a^2_{20}},
\ \ \ 
\frac{1}{2}\lambda = a_{01} a_{20} b_{00}, 
\ \ \ 
\alpha_3 = a_{01} b_{10},
\ \ \ 
\alpha_4 = -b_{01},
\ \ \ 
\alpha_5 = \frac{-a_{01}b_{20}}{a_{20}}.
\end{equation}
In a neighborhood of $(U_f,V_f)$, the terms $v^2, u^2v,$ and $u^4$ in $f$ are higher order, and the terms $uv, v^2$, and $u^3$ in $g$ are higher order, because the null set is locally a non-degenerate quadratic. 
\hfill
$\Box$

\section{The proof of Proposition \texorpdfstring{\ref{prop:haragusiooss}}{Lg}}\label{app:Turing}

In order to apply the normal form theory from Chapter 4.3.3 of \cite{HI2011} for systems with 1:1 resonant Hopf bifurcations (where these bifurcation points are labeled as the $(i \omega)^2$ resonant case), we transform variables in the vector field \eqref{eq:y-ODE-mu} to put the equilibrium \eqref{eq:equilib} at the origin.
Let
\begin{equation}
u = u_e(\mu_T) + u_1, \quad 
p = u_2, \quad 
v = v _e( \mu _T) + u_3, \quad 
q= u_4, \quad 
\mu = \mu _T + \nu.
\end{equation}
Next, we Taylor expand the vector field about the equilibrium,
\begin{equation}\label{eq:Taylorexpanded}
\begin{split}
{u_1}_x &= u_2, \\
{u_2}_x &= - \Sigma_{i + j + k } f_{ijk} u_1^i u_3^j \nu^k, \\
{u_3}_x &= \delta u_4, \\
{u_4}_x &= -\delta \Sigma_{i+j+k} g_{ijk} u_1^i u_3^j \nu^k.
\end{split}
\end{equation}
Here,
\begin{equation*} 
  \begin{split}
      f_{ijk} = \left. \frac{1}{(i+j+k)!} \frac{\partial^{i+j+k} f}{\partial u_1^i \partial u_3^j \partial \nu^k} \right|_{(u_1,u_2,u_3,u_4,\nu)=(0,0,0,0,0)} \\
      g_{ijk} = \left. \frac{1}{(i+j+k)!} \frac{\partial^{i+j+k} g}{\partial u_1^i \partial u_3^j \partial \nu^k} \right|_{(u_1,u_2,u_3,u_4,\nu)=(0,0,0,0,0)}
  \end{split}
\end{equation*}

It will be useful to re-express this vector field in terms of operators.
Let ${\bf u}=(u_1, u_2, u_3, u_4)^t$.
Out to third order, the vector field and linear operator $\mathbb{L}$ are 
\begin{equation}
\frac{d {\bf u}}{dy} = \mathbb{L} {\bf u }+ R_{20} ({\bf u}, {\bf u })
+ R_{30} ({\bf u}, {\bf u}, {\bf u })
+ \nu (R_{01} + R_{11} ({\bf u}) + R_{21} ({\bf u}, {\bf u})) 
+ \nu^2 ( R_{02} + R_{12}({\bf u}))
+ \nu ^3 R_{03},
\end{equation}
\begin{equation}
\mathbb{L} =
\left[
\begin{array}{cccc}
0 & 1 & 0 & 0 \\
-2\omega^2 + \delta^2 g_{010} & 0 & -f_{010} & 0 \\
0 & 0 & 0 & \delta \\
-\delta g_{100} & 0 & -\delta g_{010} & 0
\end{array}
\right],
\end{equation}
where we used
$f_{100} = 2\delta \sqrt{-f_{010} g_{100}} + \delta^2 g_{010} =2 \omega^2 -\delta^2 g_{010},$
which follows by \eqref{eq:Turing-condition} in Hypothesis~\ref{hypo2}.
The multi-linear functions $R_{ij}$ denote nonlinear terms of order ${\bf u}^i \mu^j$, where ${\bf u}^i$ denotes a monomial of the form $u_1^{i_1} u_2^{i_2} u_3^{i_3} u_4^{i_4}$ such that $i_1+i_2+i_3+i_4 = i$.
We find
\begin{equation}
R_{20} ({\bf u}, {\bf v}) =
\left[
\begin{array}{c}
0 \\
-(f_{200} u_1 v_1 + f_{110} u_1 v_3 + f_{020} u_3 v_3) \\
0 \\
-\delta (g_{200} u_1 v_1 + g_{110} u_1 v_3 + g_{020} u_3 v_3)
\end{array}
\right],
\end{equation}

\begin{equation}
R_{30} ({\bf u}, {\bf v}, {\bf w}) =
\left[
\begin{array}{c}
0 \\
-(f_{300} u_1 v_1 w_1 + f_{210} u_1 v_1 w_3 + f_{120} u_1 v_3 w_3 + f_{030} u_3 v_3 w_3) \\
0 \\
-\delta (g_{300} u_1 v_1 w_1 + g_{210} u_1 v_1 w_3 + g_{120} u_1 v_3 w_3
+ g_{030} u_3 v_3 w_3)
\end{array}
\right],
\end{equation}

\begin{equation}
R_{01} = \left[
\begin{array}{c}
0 \\
-f_{001} \\
0 \\
-\delta g_{001}
\end{array}
\right],
\qquad  
R_{11} ({\bf u}) = \left[
\begin{array}{c}
0 \\
-( f_{101} u_1 + f_{011} u_3)\\
0 \\
-\delta ( g_{101} u_1 + g_{011} u_3)
\end{array}
\right],
\end{equation}

\begin{equation}
R_{21} ({\bf u}, {\bf v}) = \left[
\begin{array}{c}
0 \\
-( f_{201} u_1 v_1 + f_{111} u_1 v_3 + f_{021} u_3 v_3) \\
0 \\
-\delta ( g_{201} u_1 v_1 + g_{111} u_1 v_3 + g_{021} u_3 v_3)
\end{array}
\right],
\end{equation}

\begin{equation}
R_{02} = \left[
\begin{array}{c}
0 \\
-f_{002} \\
0 \\
-\delta g_{002}
\end{array}
\right],
\qquad 
R_{12} ({\bf u}) = \left[
\begin{array}{c}
0 \\
-( f_{102} u_1 + f_{012} u_3)\\
0 \\
-\delta ( g_{102} u_1 + g_{012} u_3)
\end{array}
\right],
\qquad 
R_{03} = \left[
\begin{array}{c}
0 \\
-f_{003} \\
0 \\
-\delta g_{003}
\end{array}
\right].
\end{equation}

The vector field has a reversibility symmetry
\begin{equation}
{\bf F} (\mathcal{S} {\bf u}, \nu) = - \mathcal{S} {\bf F}({\bf u},\nu),
\hskip0.4truein
\mathcal{S} = \begin{bmatrix} 1 & 0 & 0 & 0 \\ 0 & -1 & 0 & 0 \\ 0 & 0 & 1 & 0 \\ 0 & 0 & 0 & -1 \end{bmatrix}.
\end{equation}
This reversibility symmetry exists, because the PDE has only even-order spatial derivatives. 

Next, we establish that the system \eqref{eq:Taylorexpanded} satisfies the criteria for the normal form theory of \cite{HI2011}. 
We use the eigenvectors and generalized eigenvectors of $\mathbb L$ corresponding to the eigenvalue $i \omega$ to build a basis for $\mathbb{R}^4$, where we recall that
\begin{equation}\label{eq:omega}
\omega^2 = \delta \sqrt{-f_{010} g_{100}} + \delta^2 g_{010}.
\end{equation}
These vectors are found by solving 
\begin{equation}
(\mathbb{L} - i \omega) {\bf F} = \xi,
\end{equation}
which has solutions provided the vector
$\xi = [\xi_1,\xi_2,\xi_3,\xi_4]^t$ lies in the column space of the matrix $\mathbb{L} - i \omega$, {\it i.e.,} provided 
\begin{equation}
\delta^2 g_{100} ( \omega \xi_1 - i \xi_2 )+(\delta^2 g_{010} - \omega^2)(\omega \xi_3 - i \delta \xi_4) = 0.
\end{equation}
Without loss of generality, we fix the eigenvector  and generalized eigenvector
\begin{equation}
\zeta_0 = \left[ \begin{array}{c}
-\omega f_{010} \\
-i \omega^2 f_{010}\\
\omega ( \omega^2- \delta^2 g_{010})\\
i \delta ( -f_{010} g_{100 } + g_{010} \omega^2-\delta^2 g_{010}^2)
\end{array}
\right];
\end{equation} 
\begin{equation}
\zeta_1 =
\left[
\begin{array}{c}
2 i \omega^2 f_{010} \\
\omega f_{010} (\delta^2 g_{010} - 3 \omega^2) \\
0 \\
-\frac{\delta}{\omega} ( g_{010}(\omega^2-\delta^2 g_{010})^2 + f_{010} g_{100} ( \omega^2+\delta^2 g_{010}))
\end{array}
\right].
\end{equation}
By direct computation, we verify that 
\[ \mathbb L \zeta_0 = i \omega \zeta_0, \quad (\mathbb L - i \omega) \zeta_1 = \zeta_0, \quad \mathcal{S} \zeta_0 = \overline{\zeta_0}, \quad \text{ and } \quad \mathcal{S} \zeta_1 = - \overline{\zeta_1}. \]
Hence, system \eqref{eq:Taylorexpanded} satisfies the criteria for the case of $(i\omega)^2$ resonance in Chapter 4.3.3 of \cite{HI2011}. 

As a consequence, there exists a change of variables of the form 
\begin{equation} \label{eq:app_trans}
{\bf u} = A \zeta_0 + B \zeta_1 + \overline{A} \, \overline{\zeta_0} +\overline{B} \, \overline{\zeta_1} + \tilde{F} (A,B,\overline{A},\overline{B},\nu), 
\end{equation}
where $A,B \in \mathbb{C}$,
such that the system \eqref{eq:Taylorexpanded} is transformed into the normal form
\begin{equation} \label{eq:HI_NormalForm}
\begin{split}
    \tfrac{dA}{dx} &= i \omega A + B + i A P(A,B,\nu) + \rho_A, \\ 
    \tfrac{dB}{dx} &= i \omega B + i B P(A,B,\nu) + A Q(A,B,\nu) + \rho_B.
\end{split}
\end{equation}
Here, $P$ and $Q$ are real-valued functions given by 
\begin{equation}\label{eq:PQ}
\begin{split}
    P(A,B,\nu) &= \alpha \nu + \beta A \overline{A} + \tfrac{1}{2} i \gamma \left( A \overline{B} - \overline{A} B \right) \\ 
    Q(U,V,\nu) &= a \nu + b A \overline{A} + \tfrac{1}{2} i c \left( A \overline{B} - \overline{A} B \right),
\end{split}
\end{equation}
and the functions $\rho_A=\rho_A(A,B,\overline{A},\overline{B},\nu)$ and $\rho_B=\rho_B(A,B,\overline{A},\overline{B},\nu)$ encode the higher order terms.
The conclusions of the theorem hold in the case that $a,b \ne 0$.
This is the normal form that is valid for all $\nu$ in a neighborhood of zero.
The conditions in Proposition~\ref{prop:haragusiooss} follow from this normal form, where we recall that ${\rm sgn}(a\nu) = - {\rm sgn}(\lambda-\lambda_T)$.
Also, we recall from Section~4.3.3 of \cite{HI2011} that the truncated normal form ({\it i.e.,} \eqref{eq:HI_NormalForm}-\eqref{eq:PQ} with $\rho_A, \rho_B = 0$)
is integrable, with two constants:
$K= \frac{i}{2}(A \overline{B} - \overline{A} B)$
and $H = \vert B \vert^2 - \int_0^{\vert A\vert^2} Q(s,K,\nu)ds$.

To find the coefficients $\alpha, \beta, \gamma, a, b, $ and $c$
and the terms in $\tilde{F}$, we follow \cite{HI2011}.
By differentiating \eqref{eq:app_trans} on $x$ and using \eqref{eq:Taylorexpanded} and \eqref{eq:HI_NormalForm}, we obtain the following invariance equation:
\begin{equation}
\begin{split} 
\mathbb{L} \tilde {F} + R &= (i \omega A + B) \partial_A \tilde{F} + i \omega B \partial_B \tilde{F} +( -i \omega \overline{A} +\overline{B})\partial_{A} \tilde{F} - i \omega \overline{B} \partial_{B} \tilde{F} \\
&{\hskip0.1truein} +( i A ( \zeta_0 + \partial_A \tilde{F} ) -i \overline{A} (\overline{\zeta_0} + \partial_{\overline A}\tilde{F})) P
+( i BP + AQ)( \zeta_1 + \partial_B \tilde{F})\\
&{\hskip0.1truein} + (-i \overline{B} P+ \overline{A} Q)( \overline{\zeta_1} +\partial_{\overline B} \tilde{F}) + h.o.t.
\end{split}
\end{equation}
We will solve the invariance equation order by order.
To do so, it is useful to Taylor expand
$\tilde{F} = \Sigma_{r+s+q+\ell+m \le p} A^r B^s \overline{A}^q \overline{B}^\ell \nu^m F_{rsq\ell m}$ and to project onto the vector
\begin{equation}
\zeta_1^* =
\frac {1}{4 \omega^2}
\left[
\begin{array}{c}
i \left( \frac{\delta^2 g_{010} - \omega^2}{f_{010}} \right) \\
-\frac{\omega}{f_{010}}\frac{f_{010}g_{100} +g_{010}(\delta^2 g_{010} - \omega^2)}
{f_{010}g_{100} + g_{010}( \delta^2 g_{010} - 2 \omega^2)} \\
-i \\
\frac{\omega^3}{\delta(f_{010}g_{100} +g_{010}(\delta^2g_{010} -2\omega^2))}
\end{array}
\right].
\end{equation}
This vector is orthogonal to the range of $(\mathbb{L} - i \omega)$, and it also satisfies
\[ 
\langle \zeta_0, \zeta_1^* \rangle = 0, \quad 
\langle \overline{\zeta_0}, \zeta_1^* \rangle = 0, \quad 
\langle \zeta_1, \zeta_1^* \rangle = 1, \quad \text{ and } \quad 
\langle \overline{\zeta_1}, \zeta_1^* \rangle = 0.
\]
Below, we will use the orthogonality of $\zeta_1^*$ to calculate some of the coefficients in the normal form.

We start with the following leading order terms:
\begin{equation}
\begin{split}
\mathcal{O}(\nu) &: \mathbb{L} F_{00001} + R _{01} = 0 \\
\mathcal{O}(\nu A ) &: (\mathbb{L} - i \omega) F_{10001} + R_{11}(\zeta_0) + R_{20}( \zeta_0,F_{00001}) + R_{20}(F_{00001},\zeta_0) = i \alpha \zeta_0 + a \zeta_1 \\
\mathcal{O}(\nu B) &: (\mathbb{L} - i \omega) F_{01001} + R_{11}(\zeta_1) + R_{20}( \zeta_1,F_{00001}) + R_{20}(F_{00001},\zeta_1) = i \alpha \zeta_1 + F_{10001}.
\end{split}
\end{equation}
These equations yield
\begin{equation}
\begin{split}
F_{00001} &= - \mathbb{L}^{-1} R_{01}, \\
a &= \langle (\mathbb{L} -i \omega) F_{10001} +R_{11} (\zeta_0) +R_{20}(\zeta_0,F_{00001}) + R_{20}(F_{00001},\zeta_0), \zeta_1^* \rangle,\\
i \alpha +\langle F_{10001}, \zeta_1^*\rangle &= \langle (\mathbb{L} -i \omega) F_{01001} +R_{11} (\zeta_1) + R_{20}(\zeta_1,F_{00001}) + R_{20}(F_{00001},\zeta_1),\zeta_1^* \rangle.
\end{split}
\end{equation}
We observe that ${\rm sgn}(a \nu) = - {\rm sgn} (\lambda-\lambda_T)$.

At second order, the invariance equation has the following quadratic terms: 
\begin{equation}
\begin{split}
\mathcal{O}(A^2) &: \quad 
   (\mathbb{L} - 2i \omega) F_{20000} + R_{20}(\zeta_0,\zeta_0) = 0, \\
\mathcal{O}(AB) &: \quad 
   (\mathbb{L} - 2i \omega) F_{11000} + R_{20}(\zeta_0,\zeta_1) + R_{20}(\zeta_1,\zeta_0) = 2 F_{20000}, \\
\mathcal{O}(A \overline{A}) &: \quad 
   \mathbb{L} F_{10100} + R_{20}(\zeta_0,\overline{\zeta_0}) + R_{20}(\overline{\zeta_0},\zeta_0) = 0, \\
\mathcal{O}(A \overline{B}) &: \quad 
   \mathbb{L}F_{10010} + R_{20} (\zeta_0,\overline{\zeta_1}) +R_{20}(\overline{\zeta_1},\zeta_0)= F_{10100},\\
\mathcal{O}(B^2) &: \quad 
   (\mathbb{L}-2i\omega) F_{02000} + R_{20}(\zeta_1,\zeta_1) = F_{11000}, \\
\mathcal{O}(B\overline{B}) &: \quad
   \mathbb{L} F_{01010} + R_{20}(\zeta_1,\overline{\zeta_1}) + R_{20} (\overline{\zeta_1},\zeta_1) = F_{01100} + F_{10010}.
\end{split}
\end{equation}
Also, there are terms of 
$\mathcal{O}(\overline{A}^2)$,
$\mathcal{O}(\overline{A}\, \overline{B})$, 
$\mathcal{O}(B \overline{A})$, and
$\mathcal{O}(\overline{B}^2)$.
These are the complex conjugates of the terms of
$\mathcal{O}(A^2)$,
$\mathcal{O}(AB)$, 
$\mathcal{O}(A \overline{B})$, and
$\mathcal{O}(B^2)$, respectively.
Moreover, at this order, all equations are solvable, because they are linear equations and the coefficient matrices (which are of the form $\mathbb{L}$ and $(\mathbb{L} \pm 2 i \omega)$) are invertible.

At third order, the crucial equation is
\begin{equation}\label{eq:b}
\begin{split} 
\mathcal{O}(A^2 \overline{A})&:
(\mathbb{L}- i \omega) F_{20100} + R_{20}(\zeta_0,F_{10100} ) + R_{20}(F_{10100},\zeta_0) + R_{20}(\overline{\zeta_0},F_{20000}) + R_{20}(F_{20000},\overline{\zeta_0}) \\
&{\hskip0.1truein} +R_{30}(\zeta_0,\zeta_0,\overline{\zeta_0}) + R_{30}(\zeta_0,\overline{\zeta_0},\zeta_0) + R_{30}(\overline{\zeta_0}, \zeta_0, \zeta_0 ) = i \beta \zeta_0 + b \zeta_1.
\end{split}
\end{equation}
Finally, by projecting equation \eqref{eq:b} onto the vector $\zeta_1^*$, we find that the coefficient $b$ in $Q$ (recall \eqref{eq:PQ}) is 
\begin{equation}\label{eq:def-b}
\begin{split} 
b &= \langle
R_{20}(\zeta_0,F_{10100} ) + R_{20}(F_{10100},\zeta_0) + R_{20}(\overline{\zeta_0},F_{20000}) + R_{20}(F_{20000},\overline{\zeta_0}) \\
&{\hskip0.1truein} +R_{30}(\zeta_0,\zeta_0,\overline{\zeta_0}) + R_{30}(\zeta_0,\overline{\zeta_0},\zeta_0) + R_{30}(\overline{\zeta_0}, \zeta_0, \zeta_0 ),
\zeta_1^*
\rangle.
\end{split}
\end{equation}
We refer to App. D.2 of \cite{HI2011} for full details of the general method.
The equations (D.29)--(D.34) there are the equations at second order,
and (D.35)--(D.44) are the equations at third order.
We add that $b$ has the opposite sign of the Landau coefficient (on the cubic term $\vert A\vert^2 A$) in the Ginzburg-Landau PDE.

For completeness, we list the other equations at third order here. 
All are solvable, because the operators 
$(\mathbb{L}\pm 3i\omega)$ and
$(\mathbb{L}\pm i\omega)$ are invertible.
\begin{equation}
\begin{split}
\mathcal{O}(A^3) &: (\mathbb{L} - 3i \omega) F_{30000} + R_{20}(\zeta_0,F_{20000}) + R_{20}(F_{20000},\zeta_0) + R_{30}(\zeta_0,\zeta_0,\zeta_0) = 0, \\
\mathcal{O}(B^3) &: (\mathbb{L} - 3i \omega) F_{03000} + R_{20}(\zeta_1,F_{02000}) + R_{20}(F_{02000},\zeta_1) + R_{30}(\zeta_1,\zeta_1,\zeta_1) = 0, \\
\mathcal{O}(A^2B) &: (\mathbb{L} - 3i \omega) F_{21000} 
+ R_{20}(\zeta_0,F_{11000}) + R_{20}(F_{11000},\zeta_0) 
+ R_{20}(\zeta_1,F_{20000}) + R_{20}(F_{20000},\zeta_1) \\
&{\qquad  } + R_{30}(\zeta_0,\zeta_0,\zeta_1)
+ R_{30}(\zeta_0,\zeta_1,\zeta_0)
+ R_{30}(\zeta_1,\zeta_0,\zeta_0)
= 3 F_{30000}, \\
\mathcal{O}(A^2\overline{A}) &: (\mathbb{L} - i \omega) F_{20100} 
+ R_{20}(\zeta_0,F_{10100}) + R_{20}(F_{10100},\zeta_0) 
+ R_{20}(\overline{\zeta_0},F_{20000}) + R_{20}(F_{20000},\overline{\zeta_0}) \\
&{\qquad} + R_{30}(\zeta_0,\zeta_0,\overline{\zeta_0})
+ R_{30}(\zeta_0,\overline{\zeta_0},\zeta_0)
+ R_{30}(\overline{\zeta_0},\zeta_0,\zeta_0)
= i \beta \zeta_0 + b \zeta_1, \\
\mathcal{O}(A^2\overline{B}) &: (\mathbb{L} - i \omega) F_{20010} 
+ R_{20}(\zeta_0,F_{10010}) + R_{20}(F_{10010},\zeta_0) 
+ R_{20}(\overline{\zeta_1},F_{20000}) + R_{20}(F_{20000},\overline{\zeta_1})  \\
&{\qquad} + R_{30}(\zeta_0,\zeta_0,\overline{\zeta_1})
+ R_{30}(\zeta_0,\overline{\zeta_1},\zeta_0)
+ R_{30}(\overline{\zeta_1},\zeta_0,\zeta_0)
= F_{20100} - \frac{1}{2}\gamma \zeta_0 + \frac{1}{2} i c \zeta_1, \\
\mathcal{O}(AB^2) &: (\mathbb{L} - 3i \omega) F_{12000} 
+ R_{20}(\zeta_0,F_{02000}) + R_{20}(F_{02000},\zeta_0) 
+ R_{20}(\zeta_1,F_{11000}) + R_{20}(F_{11000},\zeta_1) \\
&{\qquad  } + R_{30}(\zeta_0,\zeta_1,\zeta_1)
+ R_{30}(\zeta_1,\zeta_0,\zeta_1)
+ R_{30}(\zeta_1,\zeta_1,\zeta_0)
= 2 F_{21000}, \\
\mathcal{O}(AB\overline{A}) &: (\mathbb{L} - i \omega) F_{11100} 
+ R_{20}(\zeta_0,F_{01100}) + R_{20}(F_{01100},\zeta_0) 
+ R_{20}(\overline{\zeta_0},F_{11000}) + R_{20}(F_{11000},\overline{\zeta_0}) \\
&{\qquad  } 
+ R_{20}(\zeta_1,F_{10100}) + R_{20}(F_{10100},\zeta_1) 
+ R_{30}(\zeta_0,\overline{\zeta_0},\zeta_1)
+ R_{30}(\zeta_0,\zeta_1,\overline{\zeta_0})
+ R_{30}(\overline{\zeta_0},\zeta_0,\zeta_1) \\
&{\qquad }
+ R_{30}(\overline{\zeta_0},\zeta_1,\zeta_0)
+ R_{30}(\zeta_1,\zeta_0,\overline{\zeta_0})
+ R_{30}(\zeta_1,\overline{\zeta_0},\zeta_0)
= 2 F_{20100} + \frac{1}{2}\gamma \zeta_0 + i \left(\beta - \frac{1}{2}c\right) \zeta_1, \\
\mathcal{O}(AB\overline{B}) &: (\mathbb{L} - i \omega) F_{11010} 
+ R_{20}(\zeta_0,F_{01010}) + R_{20}(F_{01010},\zeta_0) 
+ R_{20}(\zeta_1,F_{10010}) + R_{20}(F_{10010},\zeta_1) \\
&{\qquad  } 
+ R_{20}(\overline{\zeta_1},F_{11000}) + R_{20}(F_{11000},\overline{\zeta_1}) 
+ R_{30}(\zeta_0,\zeta_1,\overline{\zeta_1})
+ R_{30}(\zeta_0,\overline{\zeta_1},\zeta_1)
+ R_{30}(\zeta_1,\zeta_0,\overline{\zeta_1}) \\
&{\qquad }
+ R_{30}(\zeta_1,\overline{\zeta_1},\zeta_0)
+ R_{30}(\overline{\zeta_1},\zeta_0,\zeta_1)
+ R_{30}(\overline{\zeta_1},\zeta_1,\zeta_0)
= 2 F_{20010} + F_{11100} - \frac{1}{2}\gamma \zeta_1, \\
\mathcal{O}(B^2\overline{A}) &: (\mathbb{L} - i \omega) F_{02100} 
+ R_{20}(\zeta_1,F_{01100}) + R_{20}(F_{01100},\zeta_1) 
+ R_{20}(\overline{\zeta_0},F_{02000}) + R_{20}(F_{02000},\overline{\zeta_0}) \\
&{\qquad} + R_{30}(\overline{\zeta_0},\zeta_1,\zeta_1)
+ R_{30}(\zeta_1,\overline{\zeta_0},\zeta_1)
+ R_{30}(\zeta_1,\zeta_1,\overline{\zeta_0})
= \frac{1}{2} \gamma \zeta_1 + F_{11100}, \\
\mathcal{O}(B^2\overline{B}) &: (\mathbb{L} - i \omega) F_{02010} 
+ R_{20}(\zeta_1,F_{01010}) + R_{20}(F_{01010},\zeta_1) 
+ R_{20}(\overline{\zeta_1},F_{02000}) + R_{20}(F_{02000},\overline{\zeta_1}) \\
&{\qquad} + R_{30}(\overline{\zeta_1},\zeta_1,\zeta_1)
+ R_{30}(\zeta_1,\overline{\zeta_1},\zeta_1)
+ R_{30}(\zeta_1,\zeta_1,\overline{\zeta_1})
= F_{02100} + F_{11010}. \\
\end{split}
\end{equation}
The equations for the terms at
$\mathcal{O}(\overline{A}^3)$,
$\mathcal{O}(\overline{B}^3)$,
$\mathcal{O}(A \overline{A}^2)$,
$\mathcal{O}(A \overline{A}\, \overline{B})$,
$\mathcal{O}(A \overline{B}^2)$,
$\mathcal{O}(B \overline{A}^2)$,
$\mathcal{O}(B \overline{A}\, \overline{B})$,
$\mathcal{O}(B \overline{B}^2)$,
$\mathcal{O}(\overline{A}^2 \overline{B})$, and
$\mathcal{O}(\overline{A}\, \overline{B}^2)$
are the complex conjugates of those at
$\mathcal{O}(A^3)$,
$\mathcal{O}(B^3)$,
$\mathcal{O}(A^2 \overline{A})$,
$\mathcal{O}(AB \overline{A})$,
$\mathcal{O}(\overline{A} B^2)$,
$\mathcal{O}(A^2 \overline{B})$,
$\mathcal{O}(AB\overline{B})$,
$\mathcal{O}(B^2 \overline{B})$, 
$\mathcal{O}(A^2 B)$, and
$\mathcal{O}(A B^2)$,
respectively.

\section{Proof of Proposition~\ref{prop:twist}}\label{app:proptwist}

In this appendix, we prove Proposition~\ref{prop:twist} that establishes the properties of solutions near the true and faux canards $\Gamma_t^0$ and $\Gamma_f^0$ of $M_{\rm RFS}$.
We begin by changing variables in \eqref{eq:K2} to rectify the algebraic solution $\Gamma_t^0$ to the $q_2$-axis. 
Let 
$(u_2,p_2,v_2,q_2) 
= (\tilde{u}_2 - \frac{1}{2}\sqrt{\lambda} \tilde{q}_2, \,
\tilde{p}_2 - \frac{1}{2} \sqrt{\lambda}, \,
\tilde{v}_2 - \frac{1}{4} \lambda \tilde{q}_2^2, \, 
-\frac{1}{2} \lambda \tilde{q}_2)$,
where we recall that $\lambda>0$ for $M_{\rm RFS}$.
Hence, in the $(\tilde{u}_2,\tilde{p}_2,\tilde{v}_2,\tilde{q}_2)$ coordinate system, the algebraic solution is given by $\tilde{\Gamma}_t (y_2) = (0,0,0,y_2)$ for all $y_2 \in \mathbb{R}$.

In these new variables, system~\eqref{eq:K2} becomes
\begin{equation}
    \begin{split}
\frac{d\tilde{u}_2}{dy_2} &= \tilde{p}_2 + \frac{1}{2} \sqrt{\lambda} r_2^2 \tilde{k}_2(\tilde{u}_2,\tilde{v}_2,\tilde{q}_2,r_2) \\
\frac{d\tilde{p}_2}{dy_2} &= \tilde{v}_2 + \tilde{u}_2^2 - \lambda \tilde{u}_2 \tilde{q}_2 + r_2^2 \tilde{h}_2(\tilde{u}_2,\tilde{v}_2,\tilde{q}_2,r_2) \\
\frac{d\tilde{v}_2}{dy_2} &= \frac{1}{2} \lambda r_2^2 \tilde{q}_2 \tilde{k}_2(\tilde{u}_2,\tilde{v}_2,\tilde{q}_2,r_2) \\
\frac{d\tilde{q}_2}{dy_2} &= 1 + r_2^2 \tilde{k}_2(\tilde{u}_2,\tilde{v}_2,\tilde{q}_2,r_2).
    \end{split}
\end{equation}
Here,
\begin{equation*}
    \begin{split}
    \tilde{h}_2 (\tilde{u}_2,\tilde{v}_2,\tilde{q}_2,r_2) 
    &= \alpha_1 \left( \tilde{u}_2 - \frac{1}{2}\sqrt{\lambda} \tilde{q}_2\right)\left(\tilde{v}_2 - \frac{1}{4} \lambda \tilde{q}_2^2\right) 
    +\alpha_2 \left(\tilde{u}_2 - \frac{1}{2}\sqrt{\lambda} \tilde{q}_2\right)^3 + \mathcal{O}(r_2^2),\\
     \tilde{k}_2 (\tilde{u}_2,\tilde{v}_2,\tilde{q}_2,r_2) 
    &= -\frac{2\alpha_3}{\lambda}  \left( \tilde{u}_2 - \frac{1}{2}\lambda \tilde{q}_2\right)
    + \mathcal{O}(r_2^2).\\
    \end{split}
\end{equation*}
Equivalently, in terms of the vector ${\bf u}_2= (\tilde{u}_2,\tilde{p}_2,\tilde{v}_2)^t$, the system may be written in non-autonomous form with $\tilde{q}_2$ as the independent variable,
\begin{equation}\label{eq:nonauto}
    \begin{split}
\frac{d\tilde{\bf{u}}_2}{d\tilde{q}_2} &= 
A(\tilde{q}_2,\sqrt{\lambda}) {\bf u}_2 + {\bf g} (\tilde{q}_2,\tilde{\bf u}_2,r_2, \sqrt{\lambda}),
    \end{split}
\end{equation}
where
\begin{equation*}
\begin{split} 
    &A(\tilde{q}_2,\sqrt{\lambda}) 
    = \left[
    \begin{array}{ccc}
          0 & 1 & 0 \\
          -\sqrt{\lambda} \tilde{q}_2 & 0 &  1 \\
          0 & 0 & 0 
    \end{array}
    \right], \quad {\rm and}\\
    &{\bf g}(\tilde{q}_2,\tilde{\bf u}_2,r_2,\sqrt{\lambda}) 
    = 
    \left[
    \begin{array}{c}
        r_2^2 \left( \frac{1}{2}\sqrt{\lambda} - \tilde{p}_2 \right)\tilde{k}_2 + \mathcal{O}(r_2^4) \\
        \tilde{u}_2^2 + r_2^2 \left( \tilde{h}_2 - (\tilde{v}_2 + \tilde{u}_2^2 -\sqrt{\lambda}\tilde{q}_2 \tilde{u}_2) \tilde{k}_2 \right) + \mathcal{O}(r_2^4) \\
        \frac{1}{2} \lambda r_2^2 \tilde{q}_2 \tilde{k}_2 + \mathcal{O}(r_2^4 \tilde{q}_2)
    \end{array}
    \right].
    \end{split}
\end{equation*}

Next, we see from \eqref{eq:nonauto} that the variational equation along $\tilde{\Gamma}_t$ is
\begin{equation*}
    \left[
    \begin{array}{c} 
    \frac{d\eta_1}{d\tilde{q}_2} \\
    \frac{d\eta_2}{d\tilde{q}_2}
    \end{array}
    \right]
 = \left[
 \begin{array}{cc}
0 & 1 \\
-\sqrt{\lambda} \tilde{q}_2 & 0 
 \end{array}
 \right]
\left[ 
\begin{array}{c}
\eta_1 \\
\eta_2
\end{array}
\right]
+ \left[
\begin{array}{c}
0 \\
\eta_3
\end{array}
\right], 
\end{equation*}
where $\eta_3$ is a constant.
This system may be written as an inhomogeneous (scalar) Airy equation,
$\frac{d^2\eta_1}{d\tilde{q}_2^2} + \lambda \tilde{q}_2 \eta_1 = \eta_3$.
The general solutions are
\begin{equation}\label{eq:solutions-eov}
\begin{split}
    \eta_1(\tilde{q}_2) 
    &= c_1 Ai(-\lambda^{1/6}\tilde{q}_2)
    + c_2 Bi(-\lambda^{1/6}\tilde{q}_2)
    - \pi \lambda^{-1/3} \zeta_3 Gi(-\lambda^{1/6}\tilde{q}_2) \, \, \, {\rm for} \, \, \tilde{q}_2<0, \\
    \eta_1(\tilde{q}_2) 
    &= c_1 Ai(-\lambda^{1/6}\tilde{q}_2)
    + c_2 Bi(-\lambda^{1/6}\tilde{q}_2)
    + \pi \lambda^{-1/3} \zeta_3 Hi(-\lambda^{1/6}\tilde{q}_2) \, \, \, {\rm for} \, \, \tilde{q}_2>0. 
\end{split}
\end{equation}
Here, we recall that $Ai(z)$ and $Bi(z)$ denote the Airy functions of the first and second kinds, $Gi(z)$ and $Hi(z)$ denote the inhomogeneous Airy functions, which are also known as Scorer functions, and $c_1$ and $c_2$ are arbitrary constants. Also, we recall that $Gi(z)+Hi(z) = Bi(z)$.

The Airy functions $Ai(z)$ and $Bi(z)$ have infinitely many zeros on the real $z$-axis, all of which are negative, \cite{DLMF}.
$Gi(z)$ also has infinitely many negative real zeros and no non-negative zeros,
while $Hi(z)$ has no real zeroes.
Hence, recalling that the argument of the Airy functions is $z=-\lambda^{1/6} \tilde{q}_2$, we see that the statements about solutions near $\Gamma_t^0$ in the proposition follow. 
On the orbit segments along $S_s^0$, $\tilde{q}_2<0$ and hence $z>0$,
whereas on segments near $S_c^0$, $\tilde{q}_2>0$ and hence $z<0$.

The analysis for the local dynamics near the other algebraic solution $\Gamma_f^0$ may be derived in a similar fashion.
One may use a similar coordinate change to straighten out $\Gamma_f^0$ along the $q_2$-axis.
The variational equation is the same as that about $\Gamma_t$ except that the lower left entry of the matrix is $+\sqrt{\lambda} \tilde{q}_2$.
The general solutions are
\begin{equation*}
\begin{split}
    \eta_1(\tilde{q}_2) 
    &= c_1 Ai(\lambda^{1/6}\tilde{q}_2)
    + c_2 Bi(\lambda^{1/6}\tilde{q}_2)
    - \pi \lambda^{-1/3} \zeta_3 Gi(\lambda^{1/6}\tilde{q}_2) \, \, \, {\rm for} \, \, \tilde{q}_2>0, \\
    \eta_1(\tilde{q}_2) 
    &= c_1 Ai(\lambda^{1/6}\tilde{q}_2)
    + c_2 Bi(\lambda^{1/6}\tilde{q}_2)
    + \pi \lambda^{-1/3} \zeta_3 Hi(\lambda^{1/6}\tilde{q}_2) \, \, \, {\rm for} \, \, \tilde{q}_2<0. 
\end{split}
\end{equation*}
Hence, solutions near $\Gamma_f^0$ can exhibit infinitely many twists near $S^0_s$ where $\tilde{q}_2<0$ (and hence $z<0$), whereas they exhibit no twists near $S^0_c$ where $\tilde{q}_2>0$.
\hfill$\Box$

\section{Proof of Proposition \texorpdfstring{\ref{prop:roleofGamma20}}{Lg}}\label{app:secondgeodesing}

In this appendix, we prove Proposition~\ref{prop:roleofGamma20}.
We drop the hats and tildes on the variables in \eqref{eq:systemonH2zero} for convenience, so that the system is
\begin{equation}\label{eq:appD}
    \begin{split}
        \frac{du_2}{d\xi_2} &= u_2 p_2 \\
        \frac{dp_2}{d\xi_2} &= \tfrac12 \left( p_2^2 + \alpha_3 q_2^2\right) + \tfrac23 u_2^3 \\
        \frac{dq_2}{d\xi_2} &= -u_2^2.
    \end{split}
\end{equation}
The lines $\mathcal L_{\pm}$ of equilibria intersect in a nilpotent fixed point at the origin. 
To study the dynamics around this nilpotent equilibrium, we use the blow-up transformation
\begin{equation}\label{eq:appDdesing}
u_2 = \rho \bar{u}_2, \quad
p_2 = \rho \bar{p}_2, \quad 
q_2 = \rho \bar{q}_2,
\end{equation}
which is a map $\Psi: \mathbb S^2 \times [-\rho_0,\rho_0] \to \mathbb R^3$, where $\overline u_2^2 + \overline p_2^2 + \overline q_2^2 = 1$ and $\rho_0$ is sufficiently small. 
In this setting, Proposition~\ref{prop:roleofGamma20} may be stated as 
\begin{proposition}\label{prop:roleofGamma20appendix}
Restricted to $\{ \bar{u}_2  \ge 0 \}$, system \eqref{eq:systemonH2zero} possesses three classes of heteroclinics:
\begin{itemize}
    \item Class 1: solutions that emanate from $\mathcal{L}_+^u \cap \mathbb{S}^2$, travel over the hemisphere $\{ \bar{u}_2 > 0 \}$ in the region enclosed by $\Gamma_0^\pm$, and terminate at the point $\mathcal{L}_+^s \cap \mathbb{S}^2$ on the equator $\{ \bar{u}_2=0 \}$;
     \item Class 2: solutions that emanate from $\mathcal{L}_+^u \cap \mathbb{S}^2$, travel over the hemisphere $\{ \bar{u}_2 > 0 \}$ in the region enclosed by $\Gamma_0^+$ and the equator, and terminate at the point $\mathcal{L}_-^s \cap \mathbb{S}^2$ on the equator;
     \item Class 3: solutions that emanate from $\mathcal{L}_-^u \cap \mathbb{S}^2$, travel over the hemisphere $\{ \bar{u}_2 > 0 \}$ in the region enclosed by $\Gamma_0^-$ and the equator, and terminate at the point $\mathcal{L}_+^s \cap \mathbb{S}^2$ on the equator.
     \end{itemize}
     \noindent 
In addition, the stable manifold, $\Gamma_{20}^+$, of the equilibrium located at the intersection of the negative $\hat{p}_2$-axis and $\mathbb{S}^2$ is the separatrix that divides between class 1 and 2 heteroclinics.
The unstable manifold, $\Gamma_{20}^-$, of the equilibrium located at the intersection of the positive $\hat{p}_2$-axis and $\mathbb{S}^2$ is the separatrix that divides between class 1 and 3 heteroclinics.
\end{proposition}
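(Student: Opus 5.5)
The plan is to study the blown-up system on $\mathbb{S}^2$ in directional charts and reduce the statement to a phase-portrait analysis of a two-dimensional flow on the closed hemisphere $\{\bar u_2\ge 0\}$. Substituting \eqref{eq:appDdesing} into \eqref{eq:appD}, the right-hand side is homogeneous of degree two in $(u_2,p_2,q_2)$ to leading order; the cubic terms $\tfrac23 u_2^3$ and $-u_2^2$ carry extra powers of $\rho$. After dividing by $\rho$, the flow on the invariant sphere $\{\rho=0\}$ is the projectivization of the quadratic part. We add one more division by $\rho$ in the charts where needed so that the terms $-u_2^2$ and $\tfrac23 u_2^3$ enter at the same order. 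Equivalently, we exploit the weighted homogeneity coming from \eqref{eq:scalinginvarH2}: we treat $u_2$ with weight $2$ and $p_2,q_2$ with weight $3$ and work in a weighted sphere, since this is what makes $\Gamma_{20}$ a single orbit on the sphere. I would commit to the weighted version, because with the weights $(2,3,3)$ the three components of \eqref{eq:appD} are weighted-homogeneous of degree $4$, $6$, $6$. The radial variable then decouples exactly, and the reduced flow on the sphere is an autonomous planar system, with no perturbation terms.

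First, in the equatorial chart(s) $\{\bar p_2=\pm1\}$ and $\{\bar q_2=\pm1\}$, I locate all equilibria on the equator $\{\bar u_2=0\}$ and compute their linearizations. These are the points $\overline{\mathcal L}_\pm^{u,s}$ where $\bar p_2=\pm\sqrt{-\alpha_3}\,\bar q_2$, and the poles $(0,\pm1,0)$. The equator is invariant because $du_2/d\xi_2=u_2p_2$. Using the spectra of $\mathcal L_\pm$ listed before the proposition, I check that $\overline{\mathcal L}_+^u,\overline{\mathcal L}_-^u$ are unstable nodes and $\overline{\mathcal L}_\pm^s$ are stable nodes relative to the hemisphere, while $(0,\pm1,0)$ are saddles. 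At $(0,-1,0)$ we have $u_2p_2<0$, so the flow contracts toward the equator in the $\bar u_2$ direction and is transverse along it; its one-dimensional stable manifold therefore points into $\{\bar u_2>0\}$. By the reversibility $(u_2,p_2,q_2,\xi_2)\to(u_2,-p_2,-q_2,-\xi_2)$, the point $(0,1,0)$ is a saddle whose unstable manifold enters the hemisphere.

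Second, I identify these separatrices with $\Gamma_0^\pm$. Substituting the explicit formula \eqref{eq:Gamma_20} into the blow-up and taking $\hat y_2\to0^\pm$ and $\hat y_2\to\pm\infty$ shows that $\overline\Gamma_0^+$ connects $\overline{\mathcal L}_+^u$ to $(0,-1,0)$, and $\overline\Gamma_0^-$ connects $(0,1,0)$ to $\overline{\mathcal L}_+^s$. The first endpoint comes from $\hat p_2\sim\hat y_2$ dominating near $\hat y_2=0$ (recall $\hat u_2\sim \hat y_2^2$, $\tilde q_2\sim\hat y_2^3$); the second comes from the ratio $\tilde q_2/\hat p_2$ at large $|\hat y_2|$. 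Since each saddle has a one-dimensional manifold into the open hemisphere, these curves are exactly $W^s$ and $W^u$ of the saddles.

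Third, I exclude recurrence in the open hemisphere with a monotone quantity. Since $dq_2/d\xi_2=-u_2^2<0$ for $u_2>0$, the weighted angle $\bar q_2$ (or $q_2/u_2^{3/2}$) is strictly monotone on the hemisphere. Hence there are no interior equilibria, periodic orbits, or cycles, and by Poincaré--Bendixson every orbit has $\alpha$- and $\omega$-limits at the equatorial equilibria. The curves $\overline\Gamma_0^\pm$ split the hemisphere into three regions. Each region is invariant, its boundary arcs on the equator are determined by the flow there, and the available source and sink in each region give Classes 1, 2 and 3 as stated. Finally, I map back to the original variables to recover Proposition~\ref{prop:roleofGamma20}.

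I expect the main obstacle to be the first step: choosing weights and charts so that the equatorial points are genuinely hyperbolic or semi-hyperbolic and not again nilpotent. If the weighted blow-up still leaves a degenerate point at $(0,\pm1,0)$, I would first check for a center-manifold direction and treat it with a further directional blow-up, as in Appendix~B of \cite{VDK2025}.
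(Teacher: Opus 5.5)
The approach you commit to fails, because \eqref{eq:appD} is not quasi-homogeneous for the weights $(2,3,3)$. The components $u_2p_2$, $\tfrac12(p_2^2+\alpha_3q_2^2)+\tfrac23u_2^3$ and $-u_2^2$ have weighted degrees $5$, $6$ and $4$, not $4,6,6$. Relative to the weights of $u_2,p_2,q_2$, the field therefore has degrees $3,3,1$, and the radial variable does not decouple. Worse, the weighted principal part is $-u_2^2\,\partial_{q_2}$ alone. On the weighted sphere the whole equator becomes a circle of completely degenerate equilibria, and the open hemisphere carries a parallel flow in the $\bar q_2$ direction. So none of the nodes and saddles of your Step~1 are present. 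In fact no weights make \eqref{eq:appD} quasi-homogeneous: $u_2p_2$, $q_2^2$, $u_2^2$ force $w_u=w_p=w_q$, while $u_2^3$ forces $3w_u=2w_p$.

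Your description of the unweighted blow-up is also off. The term $-u_2^2$ is quadratic and belongs to the principal part; only $\tfrac23u_2^3$ carries an extra factor of $\rho$, and no further division by $\rho$ is available. The paper uses precisely this homogeneous blow-up \eqref{eq:appDdesing}. The idea you are missing is what it yields in chart $K_{21}$: on $\{\rho_{21}=0\}$ the variables $z_{21},w_{21}$ decouple into $z_{21}'=-z_{21}^2+\tfrac12\sqrt{-\alpha_3}$ and $w_{21}'=-w_{21}^2-\tfrac12\sqrt{-\alpha_3}$.

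This supplies the monotone quantity for Step~3. It is $w_{21}$, not $\bar q_2$, which is not monotone on the sphere: along a class 2 orbit $\bar q_2$ equals $1/\sqrt{1-\alpha_3}$ at both ends. It also determines the whole phase portrait.
\begin{itemize}
\item $w_{21}$ blows up in finite time in both directions, so every orbit of the open hemisphere runs from equator to equator.
\item In forward time the orbit ends at $\overline{\mathcal L}_+^s$, at $\overline{\mathcal L}_-^s$, or at the pole $(0,-1,0)$, according as $w_{21}$ reaches $-\infty$ before $z_{21}$ does (or $z_{21}$ stays bounded), after it, or at the same time. In the last case $p_{21}\to-\infty$ and $q_{21}\to0$.
\item Backward time is symmetric, with $\overline{\mathcal L}_\pm^u$ and $(0,1,0)$.
\end{itemize}
Since $z_{21}$ blows up in at most one time direction, with blow-up time monotone in its initial value, this gives exactly the three classes and a unique separatrix on each side.

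Step~2 also fails. As $\hat y_2\to\pm\infty$ one has $\hat u_2\to\infty$ and $\tilde q_2/\hat p_2=-\hat y_2^2/6\to-\infty$. So $\Gamma_{20}$ leaves every neighbourhood of the origin, with direction tending to $(0,0,\mp1)$, and never approaches $\overline{\mathcal L}_\pm$. It lies in $\{\rho>0\}$ and reaches $\{\rho=0\}$ only as $\hat y_2\to0^\pm$, at the poles $(0,\pm1,0)$. It does so transversally to the sphere: in $K_{22}$ it is the curve $\rho_{22}=-\tfrac{\alpha_3}{3}u_{22}$, $q_{22}=-\tfrac23u_{22}^2$. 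This is one orbit in the two-dimensional unstable manifold of the pole, whose eigenvalues are $\tfrac12,\tfrac12,-\tfrac12$ in the $u_{22},\rho_{22},q_{22}$ directions.

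Hence $\Gamma_{20}$ is not the one-dimensional separatrix inside the sphere, and ``these curves are exactly $W^s$ and $W^u$ of the saddles'' does not follow. The in-sphere separatrix merely shares the tangency $q_{22}=-\tfrac23u_{22}^2+\cdots$ with it at the pole. The separatrices have to come from the sphere flow itself, as the in-sphere stable and unstable manifolds of the poles, i.e.\ the simultaneous blow-up orbits above. That is how the paper's proof defines $\Gamma_{20}^\pm$.
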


To prove Proposition~\ref{prop:roleofGamma20appendix}, we analyse the dynamics of \eqref{eq:appD} in the blown-up coordinates \eqref{eq:appDdesing}. We work in three coordinate charts
\begin{equation}\label{eq:appD3charts}
    \begin{split}
       {\rm central}: \quad & K_{21} = \{\bar{u}_2 = 1 \} : u_2=\rho_{21}, \, \, \, p_{21} = \rho_{21} p_{21}, \, \, \, q_2=\rho_{21} q_{21}  \\
       {\rm back}: \quad & K_{22} = \{ \bar{p}_2 = 1 \}: u_2 = \rho_{22} u_{22}, \, \, \, p_2=\rho_{22}, \, \, \, q_2=\rho_{22} q_{22}, \\
       {\rm bottom}: \quad & K_{23} = \{ \bar{q}_2 = -1 \} : u_2 = \rho_{23} u_{23}, \, \, \, 
       p_2=\rho_{23} p_{23}, \, \, \, q_2=-\rho_{23}.
       \end{split}
\end{equation}

\begin{figure}[ht!]
    \centering
    \includegraphics[width=5in]{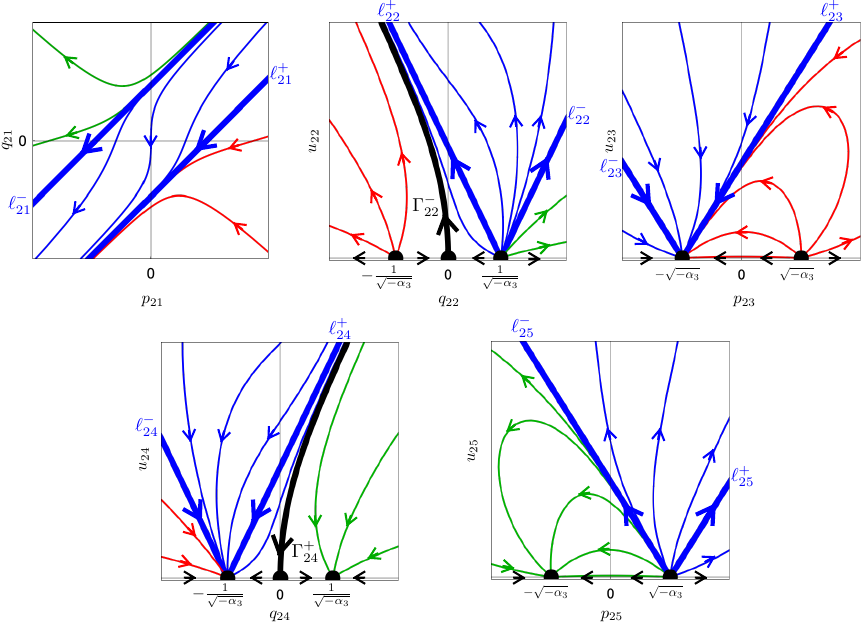}
    \put(-362,248){(a)}
    \put(-240,248){(b)}
    \put(-115,248){(c)}
    \put(-310,114){(d)}
    \put(-170,114){(e)}
    \caption{Dynamics of the unperturbed problem ($\rho_{2k} = 0$ for $k=1,2,\ldots,5$) in the different coordinate charts. 
    (a) In chart $K_{21}: \{ \overline u_2 = 1\}$, the line $\ell_{21}^+$ is attracting and the line $\ell_{21}^-$ is repelling. 
    (b) In chart $K_{22} : \{ \overline p_2 = 1 \}$, there is a pair of unstable nodes at $(u_{22},q_{22})=(0,\pm \tfrac{1}{\sqrt{-\alpha_3}})$ and a saddle at the origin. 
    The invariant line $\ell_{22}^-$ separates the class 1 (blue) and class 2 (green) heteroclinics. 
    The unstable manifold $\Gamma_{22}^-$ of the saddle divides between solutions corresponding to class 1 (blue) and class 3 (red) heteroclinics.
    (c) In chart $K_{23}: \{ \overline q_2=-1 \}$, there is a stable node at $(u_{23},p_{23})=(0,-\sqrt{-\alpha_3})$ and an unstable node at $(u_{23},p_{23})=(0,\sqrt{-\alpha_3})$. The invariant line $\ell_{23}^+$ separates the class 1 (blue) and class 3 (red) heteroclinics. 
    (d) In chart $K_{24}: \{ \overline p_2 = -1 \}$, there is a pair of stable nodes at $(u_{24},q_{24}) = (0,\pm \tfrac{1}{\sqrt{-\alpha_3}})$ and a saddle at the origin. The stable manifold $\Gamma_{24}^+$ of the saddle divides between solutions corresponding to class 1 (blue) and class 2 (green) heteroclinics. 
    The invariant line $\ell_{24}^-$ separates the class 1 (blue) and class 3 (red) heteroclinics. 
    (e) In chart $K_{25}: \{ \overline q_2 = 1 \}$, there is a stable node at $(u_{25},p_{25})=(0,-\sqrt{-\alpha_3})$ and an unstable node at $(u_{25},p_{25})=(0,\sqrt{-\alpha_3})$. The invariant line $\ell_{25}^-$ separates the class 1 (blue) and class 2 (green) heteroclinics.
    }
    \label{fig:secondblowupcharts}
\end{figure}

The dynamics in the central chart $K_{21}$ are governed by
\begin{equation}\label{eq:chartK21}
\begin{split}
    \dot{\rho}_{21} &= \rho_{21} p_{21}\\
    \dot{p}_{21} &= \tfrac12 (\alpha_3 q_{21}^2 - p_{21}^2) +\tfrac{2}{3}\rho_{21} \\
    \dot{q}_{21} &= -( 1 + p_{21}q_{21}),
\end{split}
    \end{equation}
where we have rescaled the independent variable by a factor of $\rho_{21}$, and the overdot represents the derivative with respect to the rescaled variable ($\eta_2$).
The subspace $\{ \rho_{21} =0 \}$ is invariant.
On it, there are attracting and repelling lines,
\begin{equation}
\begin{split} 
    \ell_{21}^+ &= \{ p_{21}- \sqrt{-\alpha_3} q_{21} = \sqrt{2}(-\alpha_3)^{1/4} \} \\
 \ell_{21}^- &= \{ p_{21}- \sqrt{-\alpha_3} q_{21} = - \sqrt{2}(-\alpha_3)^{1/4} \}.
    \end{split}
\end{equation}
In fact, all solutions on this plane may be found explicitly.
Let $z_{21}=\tfrac12 (p_{21} - \sqrt{-\alpha_3} q_{21})$ and 
$w_{21}=\tfrac12 (p_{21} + \sqrt{-\alpha_3} q_{21})$.
Then, the system decouples:
\begin{equation*}
   \begin{split}
       \dot{z}_{21} &=-z_{21}^2 + \tfrac{1}{2}\sqrt{-\alpha_3} \\
       \dot{w}_{21} &= -w_{21}^2 - \tfrac{1}{2}\sqrt{-\alpha_3}.
   \end{split}
\end{equation*}
Hence, the solutions are
\begin{equation*}
\begin{split}
z_{21}(\eta_{2})&= \frac{(-\alpha_3)^{1/4}}{\sqrt{2}} \tanh 
\left( \frac{(-\alpha_3)^{1/4}}{\sqrt{2}} \eta_2 + {\rm tanh}^{-1} \left( 
\frac{\sqrt{2}}{(-\alpha_3)^{1/4}} z_{21}(0) \right) \right), \\
w_{21}(\eta_{2})&= - \frac{(-\alpha_3)^{1/4}}{\sqrt{2}} \tan
\left( \frac{(-\alpha_3)^{1/4}}{\sqrt{2}} \eta_2 - {\rm tan}^{-1} \left( 
\frac{\sqrt{2}}{(-\alpha_3)^{1/4}} w_{21}(0) \right) \right).
\end{split}
\end{equation*}
The dynamics are illustrated in Figure~\ref{fig:secondblowupcharts}(a).

The dynamics in chart $K_{22}$ are governed by
\begin{equation}\label{eq:K22}
    \begin{split}
        \dot{u}_{22} &= \tfrac12 u_{22} \left(
        1 - \alpha_3 q_{22}^2 - \tfrac{4}{3}\rho_{22} u_{22}^3 \right) \\
        \dot{\rho}_{22} &= \tfrac12 \rho_{22} \left( 1 + \alpha_3 q_{22}^2 + \tfrac{4}{3} \rho_{22} u_{22}^3 \right)\\
        \dot{q}_{22} &= -u_{22}^2 - \tfrac12 q_{22} \left( 1 + \alpha_3 q_{22}^2 + \tfrac43  \rho_{22} u_{22}^3
        \right),
    \end{split}
\end{equation}
where we rescaled the independent variable by a factor of $\rho_{22}$ and recycled the overdot.
The plane $\{ \rho_{22}=0 \}$ is invariant.
On it, the system has three equilbria,
\begin{equation*} 
(u_{22},q_{22})= (0,0), \, \, \, (0, \pm \tfrac{1}{\sqrt{-\alpha_3}}).
\end{equation*} 
The origin is a saddle with spectra $\left\{ -\tfrac12, +\tfrac12 \right\}$
and eigenvectors corresponding to the $q_2$- and $u_2$-axes, respectively.
The other equilibria are unstable improper nodes, with repeated eigenvalues of $1$.

In addition, the system on this invariant plane has two invariant lines
\begin{equation}
\begin{split} 
    \ell_{22}^+ &= \{ \sqrt{2}(-\alpha_3)^{1/4} u_{22} + \sqrt{-\alpha_3} q_{22} = 1 \} \\
    \ell_{22}^- &= \{ \sqrt{2}(-\alpha_3)^{1/4} u_{22} - \sqrt{-\alpha_3} q_{22} = -1 \}.
\end{split}
\end{equation}
These lines are the images of the invariant lines in chart $K_{21}$:
\begin{equation*}
    \ell_{22}^\pm = \Pi_{12}(\ell_{21}^\pm),
\end{equation*}
as may be seen by using the transition map 
\begin{equation}\label{eq:Pi12}
\Pi_{12}(\rho_{21},p_{21},q_{21}) = (u_{22},\rho_{22},q_{22}) 
= \left(
\frac{1}{p_{21}}, \rho_{21}p_{21}, \frac{q_{21}}{p_{21}}
\right) \quad {\rm for} \, \, \, p_{21}>0.
\end{equation}
The dynamics on $\{ \rho_{22}=0 \}$ are illustrated in 
Figure~\ref{fig:secondblowupcharts}(b).

The dynamics in the bottom chart $K_{23}$ are governed by
\begin{equation}\label{eq:chartK23}
\begin{split}
    \dot{u}_{23} &= u_{23} (p_{23} - u_{23}^2)\\
    \dot{p}_{23} &= \tfrac12 \alpha_3 - u_{23}^2 p_{23} + \tfrac{1}{2} p_{23}^2 + \tfrac23 \rho_{23} u_{23}^3 \\
    \dot{\rho}_{23} &= \rho_{23} u_{23}^2,
\end{split}
\end{equation}
where we have rescaled the independent variable by a factor of $\rho_{23}$, and the overdot has been recycled again.
The subspace $\{ \rho_{23} = 0 \}$ is invariant.
On it, the system has two fixed points,
$(u_{23},p_{23})=(0, \pm \sqrt{-\alpha_3})$, which are unstable/stable improper nodes, respectively, with repeated eigenvalues $\pm \sqrt{-\alpha_3}$, respectively.

In addition, the system on this invariant plane has two invariant lines
\begin{equation}
\begin{split} 
    \ell_{23}^+ &= \{ -\sqrt{2}(-\alpha_3)^{1/4} u_{23} + p_{23} = -\sqrt{-\alpha_3} \} \\
    \ell_{23}^- &= \{ \sqrt{2}(-\alpha_3)^{1/4} u_{22} + p_{23} = - \sqrt{-\alpha_3} \}.
\end{split}
\end{equation}
These lines are the images of the invariant lines in chart $K_{21}$,
\begin{equation*}
    \ell_{23}^\pm = \Pi_{13}(\ell_{21}^\pm),
\end{equation*}
as may be seen by using the transition map 
\begin{equation}\label{eq:Pi13}
\Pi_{13}(\rho_{21},p_{21},q_{21}) = (u_{23},p_{23},\rho_{23}) 
= \left(
-\frac{1}{q_{21}}, -\frac{p_{21}}{q_{21}}, -\rho_{21}q_{21}
\right) \quad {\rm for} \, \,\, q_{21}<0.
\end{equation}
The dynamics on $\{ \rho_{23}=0 \}$ are illustrated in Figure~\ref{fig:secondblowupcharts}(c).

The dynamics in the coordinate charts $K_{24}=\{ \bar{p}_2=-1 \}$ and $K_{25}=\{ \bar{q}_2=1 \}$ may be obtained by using the reversibility symmetry $(u_2,p_2,q_2,\xi_2) \mapsto (u_2,-p_2,-q_2,-\xi_2)$. We show the unperturbed versions of these (i.e., in the invariant subspaces $\{\rho_{24}=0\}$ and $\{ \rho_{25} = 0 \}$) in Figures~\ref{fig:secondblowupcharts}(d) and (e), respectively.

Next, we find the representations of the algebraic solution $\Gamma_{20}$ in the charts $K_{21}$, $K_{22}$, and $K_{23}$.
In chart $K_{21}$, we use \eqref{eq:Gamma_20} and the first equation in \eqref{eq:appDdesing} to find that it is given in parametric form by
$\Gamma_{21} \hspace{-0.04truein}: (\rho_{21},p_{21}) = \left( - \frac{3\alpha_3}{4} q_{21}^2, -\frac{2}{3 q_{21}}\right)$.
Then, in $K_{22}$, it is given by
$\Gamma_{22}\hspace{-0.04truein}: (\rho_{22},q_{22}) = \left( - \tfrac{1}{3}\alpha_3 u_{22}, -\tfrac{2}{3} u_{22}^2 \right)$,
as may be verified by using \eqref{eq:Pi12} to map the representation from $K_{21}$ to $K_{22}$. 
Hence, $\Gamma_{22}$ emanates from the origin $(u_{22},q_{22})=(0,0)$, as shown in Figure~\ref{fig:secondblowupcharts}(b). 
Lastly, in $K_{23}$, it is given by 
$\Gamma_{23}\hspace{-0.04truein}: (p_{23},\rho_{23}) = \left( \tfrac{2}{3} u_{23}^2, -\tfrac{3\alpha_3}{4u_{23}^3} \right)$,
as may be verified by using \eqref{eq:Pi13} to map the representation from $K_{21}$ to $K_{23}$. 
(Also, one may obtain explicit representations in the charts using the dynamic independent variable for each chart.)

The first class of heteroclinic solutions (shown in blue in Figures~\ref{fig:secondblowup} and \ref{fig:secondblowupcharts}) consists of those that emanate from $(u_{22},q_{22})=(0,\frac{1}{\sqrt{-\alpha_3}})$ in $K_{22}$, which is the point on the equator at which the line $\overline{\mathcal{L}}^u_+$ of fixed points intersects $\mathbb{S}^2$, and that are forward asymptotic to (terminate in) the equilibrium at $(u_{23},p_{23})=(0,-\sqrt{-\alpha_3})$ on the equator, where the invariant line $\overline{\mathcal{L}}_+^s$ intersects $\mathbb{S}^2$.
These heteroclinics are bounded by $\overline{\Gamma}_0^+$ and $\bar{\Gamma}_0^-$.
(Equivalently, they terminate in the point $(u_{24},q_{24})=(0,-\frac{1}{\sqrt{-\alpha_3}})$ in chart $K_{24}$.)

The second class of heteroclinics (shown in green in Figures~\ref{fig:secondblowup} and \ref{fig:secondblowupcharts}) consists of those that emanate from $(u_{22},q_{22})=(0,\frac{1}{\sqrt{-\alpha_3}})$ in $K_{22}$, where the line $\overline{\mathcal{L}}^u_+$ of fixed points intersects $\mathbb{S}^2$, and that are forward asymptotic to (or terminate in) the equilibrium at $(u_{24},q_{24})=(0,\frac{1}{\sqrt{-\alpha_3}})$ on the equator, where the invariant line $\overline{\mathcal{L}}_-^s$ intersects $\mathbb{S}^2$.
These heteroclinics are bounded by the equator and $\overline{\Gamma}_0^+$, which is represented by $\Gamma_{24}^+$ in $K_{24}$.
See also Figure~\ref{fig:secondblowupcharts}(d).

The third class of heteroclinics (shown in red) is comprised of those that emanate from $(u_{22},q_{22})=(0,-\frac{1}{\sqrt{-\alpha_3}})$ in $K_{22}$, where the line $\overline{\mathcal{L}}^u_-$ of fixed points intersects $\mathbb{S}^2$, and that are forward asymptotic to (or terminate in) the equilibrium at $(u_{24},q_{24})=(0,-\frac{1}{\sqrt{-\alpha_3}})$ on the equator, where the invariant line $\overline{\mathcal{L}}_+^s$ intersects $\mathbb{S}^2$.
These heteroclinics are bounded by the equator and $\overline{\Gamma}_0^-$, which is represented by $\Gamma_{22}^-$.

The stable manifold, $\Gamma_{20}^+$, of the equilibrium point where the negative $\bar{p}_2$-axis intersects $\mathbb{S}^2$ separates the class 1 and 2 heteroclinics.
Similarly, the unstable manifold, $\Gamma_{20}^-$, of the equilibrium point where the positive $\bar{p}_2$-axis intersects $\mathbb{S}^2$ separates the class 1 and 3 heteroclinics.
This completes the proof of the proposition.

\begin{remark}
Within the coordinate charts $K_{2j}$ for $j=2,3,4,5$, there is an additional invariant plane given by $\{u_{2j}=0\}$.
The dynamics in these invariant planes possess two invariant lines which correspond to the lines $\mathcal{L}_\pm$. 
\end{remark}

\begin{remark}
    In the geometric desingularization of the cusp point of the spatial canard solutions in the rescaling chart of the van der Pol PDE analyzed in Section 6.1.2 of \cite{VDK2025}, the numbering of the three classes of heteroclinics is established in Proposition 6.2 and Figure 19 of \cite{VDK2025}.
    The numbering used here is similar.
    Note that the numbering of classes 2 and 3 on page 2679 (Appendix B) of \cite{VDK2025} is inadvertently reversed.
\end{remark}

\section{Turing bifurcation in the classical Gierer-Meinhardt PDE \texorpdfstring{\eqref{eq:GM-nondim}}{Lg}}
\label{app:GM-Turing}

In the classical Gierer-Meinhardt PDE \eqref{eq:GM-nondim}, the conditions for the homogeneous state $(U_e,V_e)$ to undergo a Turing bifurcation are 
\begin{equation*}
    \det \widetilde{M} =0 \hskip0.2truein {\rm and} \hskip0.18truein \frac{d}{dk^2} \det \widetilde{M} = 0,
\end{equation*}
where 
$$
\widetilde{M} = \left[ 
\begin{array}{cc}
\frac{\gamma_1(1-\gamma_2)}{1+\gamma_2} - \delta^2 k^2 & -\frac{\gamma_1^2}{(1+\gamma_2)^2} \\
\frac{2}{\gamma_1}(1+\gamma_2) & -1 -k^2 
\end{array}
\right],
$$
along with $\gamma_1(1-\gamma_2) - 1 - \gamma_2 < 0$, 
so that the equilibrium is stable for the reaction kinetics. 
Hence,
\begin{equation*} 
\delta^2 k^4 + \left( \delta^2 - \left(\frac{1-\gamma_2}{1+\gamma_2}\right)\gamma_1\right) k^2 + \gamma_1 = 0, \hskip0.4truein 
2\delta^2 k^2 + \left(\delta^2 - \left( \frac{1-\gamma_2}{1+\gamma_2}\right) \gamma_1  \right) = 0,
\end{equation*}
with $\frac{\gamma_1}{2\delta^2}\left( \frac{1-\gamma_2}{1+\gamma_2} \right) > \frac{1}{2}$
so that $k_T^2 > 0$.
These conditions yield \eqref{eq:gamma_2T-GM}.
(Note that, with $k=0$, the determinant of $M$ is just $\gamma_1$, and hence positive by assumption.)

\section{Two additional prototypical systems: The Brusselator model and the van der Pol PDE}\label{app:examples}

In this appendix, we present two additional prototypical pattern-forming PDE systems of the form \eqref{eq:genAI-UV} with non-degenerate folds. 
These have been chosen from chemistry and electrical engineering, to complement the Gierer-Meinhardt example from biology.

\subsection{The Brusselator model} \label{ex:Brusselator}
Introduced in \cite{PL1968}, the Brusselator PDE model,  
    \begin{equation}\label{eq:Brusselator}
        \begin{split}
        \partial_t U &= \delta^2 \partial_x^2 U + A - (B+1) U + U^2V, \\
        \partial_t V &= \partial_x^2 V + BU - U^2V,
        \end{split}
    \end{equation}
is of the form \eqref{eq:genAI-UV} in the limit of small activator diffusivity. 
See also \cite{CH1993,EP1998,GS1997,Pena2001, S2003, SZJV2018, W1997}. 
Here, $F(U,V)= A -(B+1)U + U^2V$
    and $G(U,V)=BU - U^2V $, where $A,B>0$ are parameters.
    The null set of $F$ is given by $V_0(U)=\frac{(B+1)U - A}{U^2}$, and
    $(U_f,V_f)=\left(\frac{2A}{B+1},\frac{ (B+1)^2}{4A} \right)$ is a non-degenerate fold point on it.
    Also, the point $(U_e,V_e)=\left(A,\frac{B}{A}\right)$ is an equilibrium for all $A,B>0$. 
    
Locally near the fold point, the Brusselator model can be expressed in the form \eqref{eq:genAI}.
We find: $\frac{1}{2}\lambda=\frac{A^2(B-1)}{B+1}$,
    $\alpha_1= -\frac{4}{B+1}$, 
    $\alpha_2= 0$,
    $\alpha_3= -\frac{4A^2}{(B+1)^2}$,
    $\alpha_4= \frac{4A^2}{(B+1)^2}$, and
    $\alpha_5= \frac{4A^2}{(B+1)^2}$.
Hence, locally, the PDE has the form 
    \begin{equation}\label{eq:Brusselator-nf}
        \begin{split}
            \partial_t u &= \delta^2 \partial_x^2 u -v -u^2 +\frac{4}{B+1} uv - \frac{4}{(B+1)^2}u^2v, \\
        \partial_t v &= \partial_x^2 v +\frac{A^2}{(B+1)}
        \left\{ (B-1) + \frac{4}{(B+1)}(u-v-u^2) + \frac{16}{(B+1)^2} uv - \frac{16}{(B+1)^3} u^2v \right\}.
        \end{split}
    \end{equation}
Note that this system is exact ({\it i.e.,} there are no higher order terms), because the nonlinearities in \eqref{eq:Brusselator} are polynomials of degree three.
Now, the non-degenerate fold point is located at $(u_f,v_f)=(0,0)$ for each $B>0$, and the equilibrium is at $(u_e,v_e)=\left(-\frac{1}{4}(B^2-1),-\frac{1}{4}(B-1)^2\right)$.

The stability types of the folded singularity and the equilibrium were determined as a function of $B$ in \cite{JDKV2026} (and they also follow from the calculations here in Section~\ref{sec:layerproblem+criticalmanifold+desingularizedsystem}).
For each $B>1$, the fold point is a reversible folded saddle (RFS) of the spatial ODE system, while the equilibrium is a center.
Then, at $B=1$ ($\lambda=0$), the RFS coincides with the equilibrium in a reversible folded saddle-node of type II (RFSN-II).
In particular, the equilibrium passes through $(0,0)$ at $B=1$. 
Then, for each $B<1$, the fold point is a folded center, while the equilibrium is a saddle.
That is, the folded and ordinary singularities exchange stability type at $B=1$. 

The homogeneous state $(U_e,V_e)$ of the PDE \eqref{eq:Brusselator} undergoes a Turing bifurcation at 
\begin{equation}\label{eq:Brusselator-BT}
B_T = ( 1 + \delta A )^2.
\end{equation}
In $\lambda$, the Turing bifurcation occurs
at $\lambda_T=\frac{2\delta A^3 (2 + \delta A)}{2 + 2 \delta A + \delta^2 A^2}$.
Hence, $\lambda_T = 2 \delta A^3$, to leading order.

In the normal form for the 1:1 resonant Hopf bifurcation,
the coefficient that determines the criticality of the Turing bifurcation in the Brusselator PDE is given exactly by
\begin{equation}
  b = -\frac{1}{36} A^2 \left( 8 - 30 \delta A - 43 
  \delta^2 A^2 + 3 \delta^3 A^3 + 8 \delta^4 A^4 \right).  
\end{equation}
(Recall Proposition 2.1 in \cite{JDKV2026}.)
This quartic function for $b$ has zeros at $A= -\frac{2}{\delta}, -\frac{1}{\delta}, 0, \frac{21 \pm \sqrt{313}}{16\delta}$.
Hence, 
\begin{equation}
    \begin{split}
  A &\in \left(0, \frac{21-\sqrt{313}}{16\delta}\right) \ \ \Rightarrow
  \ \ b < 0, \ \ {\rm subcritical} \ \ {\rm Turing}.  \\
  A &\in \left( \frac{21-\sqrt{313}}{16\delta}, \frac{21 + \sqrt{313}}{16\delta}  \right)
  \ \ \Rightarrow
  \ \ b > 0, \ \ {\rm supercritical} \ \ {\rm Turing}.
    \end{split}
\end{equation}

Theorems~\ref{thm:1} and \ref{thm:2} establish the existence of spatial canard solutions and spatially-periodic canard solutions for the Brusselator PDE \eqref{eq:Brusselator} for $B\ge 1$ ($\lambda\ge 0$) and $\delta$ sufficiently small. 
Furthermore, the latter emerge from sub-critical and super-critical Turing bifurcations, depending on the sign of $b$.
Hence, the analysis in this article establishes the rigorous existence of
spatially-periodic canards similar to those found numerically in the sub-critical case in \cite{JDKV2026}, as well as the stable spatially-periodic canards created in the super-critical Turing bifurcation.

\begin{remark}
An alternate derivation of the formula \eqref{eq:Brusselator-BT} for $B_T$ may be made from the condition \eqref{eq:Turing-condition}, as follows. 
The model \eqref{eq:Brusselator-nf} is of the form \eqref{eq:genAI-mu} with
\begin{equation*}
\begin{split} 
f(u,v;B)&=-v - u^2 +\frac{4}{B+1}uv - \frac{4}{(B+1)^2}u^2v, \\
g(u,v;B)&=\frac{A^2}{(B+1)}
\left\{ 
(B-1) + \frac{4}{(B+1)}(u-v-u^2) + \frac{16}{(B+1)^2} uv - \frac{16}{(B+1)^3}u^2v
\right\},
\end{split}
\end{equation*}
and $\mu = B$. 
Then, at the equilibrium point $(u_e,v_e)$,
$f_u=B-1$,
$f_v=-\frac{1}{4}(B+1)^2$,
$g_u=\frac{4A^2B}{(B+1)^2}$,
and $g_v=-A^2$,
so that the Turing bifurcation condition \eqref{eq:Turing-condition} yields $B_T$.
\end{remark}

\subsection{The van der Pol PDE}
\label{ex:vanderPol}

The van der Pol PDE 
\begin{equation}\label{eq:vdPPDE}
        \partial_t U = \delta^2 \partial_x^2 U + V -\frac{1}{3} U^3 + U,
        \hskip0.3truein 
        \partial_t V = \partial_x^2 V + \eps(a-U)
\end{equation}
is also an example of the general class of activator-inhibitor systems \eqref{eq:genAI-UV}. 
Here, $F(U,V)= V - \frac{1}{3}U^3 + U$ and $G(U,V)=\varepsilon(a-U)$,
where $a \in \mathbb{R}$ and $0< \varepsilon < \infty$.
The null set of $F$ is given by the graph of $V_0(U)=\frac{1}{3}U^3 - U$.
There is a non-degenerate fold point at $(U_f,V_f)=\left(1,-\frac{2}{3}\right)$, and another (by symmetry) at $\left(-1,\frac{2}{3}\right)$.
Also, the point $\left(U_e,V_e)=(a,\frac{1}{3}a^3-a\right)$ is an equilibrium point.

Focusing on $(1, -\frac{2}{3})$, we find that the PDE is of the form \eqref{eq:genAI},
with $\frac{1}{2}\lambda = \varepsilon(1-a)$,
$\alpha_1=0$,
$\alpha_2=\frac{1}{3}$,
$\alpha_3=-\varepsilon$,
$\alpha_4=0,$
and $\alpha_5=0$.
Hence, the system is
\begin{equation}\label{eq:vdP-nf}
        \partial_t u = \delta^2 \partial_x^2 u -v -u^2 -\frac{1}{3}u^3, 
        \hskip0.3truein 
        \partial_t v = \partial_x^2 v +\eps(1-a) + \eps u.
\end{equation}
There are no higher order terms in either equation, since $F$ is linear in $V$ and cubic in $U$, and since $G$ is linear in $U$ and independent of $V$.
In this formulation, the fold point is at $(u_f,v_f)=(0,0)$ for all $a$, 
and the equilibrium is at $(u_e,v_e)=\left(a-1,-(a-1)^2 - \frac{1}{3}(a-1)^3\right)$.

In the spatial ODE system, the stability types of the fold point at $(u_f,v_f)=(0,0)$ (which corresponds to $(U_f,V_f)=\left(1,-\frac{2}{3}\right)$) and the equilibrium were reported as functions of $a$, see Table 1 in \cite{VDK2025}.
For each $a<1$ ($\lambda>0$), the folded singularity is an RFS, and the equilibrium $(u_e,v_e)$ is a center.
Then, for $a=1$ ($\lambda=0$), the fold point is an RFSN-II, where the fold point and the equilibrium merge. 
Finally, for each $a>1$ ($\lambda<0$), the folded singularity is a reversible folded center (RFC), while the equilibrium is a saddle.

The homogeneous state undergoes a Turing bifurcation at 
\begin{equation}\label{eq:vdP-aT}
a_T= \sqrt{ 1 - 2\delta \sqrt{\varepsilon}},
\end{equation}
recall \cite{VDK2025}.
This parameter value is a 1:1 resonant Hopf bifurcation point of the spatial ODE system, with $\omega_0=\eps^{1/4}$ for all $\eps>0$ (where $\eps$ need not be small).
In terms of $\lambda$, the Turing bifurcation occurs at
$\lambda_T= 2\eps \left(1 - \sqrt{1-2\delta\sqrt{\eps}}\right)$, and hence
$\lambda_T = 2 \delta \eps^{3/2}$, to leading order.

The criticality of this Turing bifurcation is determined by the sign of the coefficient
\begin{equation}
    b=\frac{1}{36}\left(-8 + 25\delta \sqrt{\eps}\right),
\end{equation}
which is an exact formula.
Hence, 
\begin{equation}
    \begin{split}
        \delta &< \frac{8}{25\sqrt{\eps}} \ \ \Rightarrow \ \  b<0 \ \ \ 
        {\rm subcritical} \ \ {\rm Turing}, \\
        \delta &> \frac{8}{25\sqrt{\eps}} \ \ \Rightarrow \ \ b>0 \ \ \ {\rm supercritical} \ \ {\rm Turing}.
    \end{split}
\end{equation}
Recall Section 2 in \cite{VDK2025}.

Theorems~\ref{thm:1} and \ref{thm:2} establish the existence of spatial canard solutions and spatially-periodic canard solutions for the van der Pol PDE \eqref{eq:vdPPDE} for $a \le 1$ ($\lambda\ge 0$) and $\delta$ sufficiently small. 
Hence, the analysis in this article establishes the rigorous existence of spatially-periodic
canards similar to those found numerically in \cite{VDK2025}.
The van der Pol PDE has extra structure, since the system of four first-order spatial ODEs is Hamiltonian, and the conserved energy makes it possible to also construct large-amplitude spatially-periodic canard solutions, which have segments along all three branches of the cubic-shaped critical manifold. 
See Figures 1, 2, 7--11 there, as well as the spatially periodic canard attractors observed in direct PDE simulations, shown in Section 11 there.

\begin{remark} 
An alternate derivation of the formula \eqref{eq:vdP-aT} for $a_T$ may be made from the analysis in this section, as follows. 
Let $f(u,v;a)=-v - u^2 - \frac{1}{3}u^3$ 
and $g(u,v;a)=\eps(1-a) + \eps u$, so that \eqref{eq:vdP-nf} is of the form \eqref{eq:genAI-mu} with $\mu = a$. 
Recall that $u_e=a-1$. 
Then, $\sqrt{-f_v g_u} + \delta g_v = \sqrt{\eps}$, so that the condition \eqref{eq:Turing-condition} for the Turing bifurcation becomes $-2u_e - u_e^2 = 2\delta\sqrt{\eps}$, which also yields $a_T$ in closed form.
\end{remark}

\vspace{0.5in}

\vspace{0.1in}
\noindent
{\bf Declaration about competing interests.}
The authors declare that they have no competing interests conducting the research and preparing the work for submission.

\vspace{0.1in}
\noindent
{\bf Funding information.}
The work of R.J. was supported in part by a research account sponsored by Boston University.

\printbibliography

@article{T1952,
author={Turing, A.M.},
title = {The chemical basis of morphogenesis},
journal = {Philosophical Transactions of the Royal Society of London B},
volume = {237(641)},
year = {1952}, 
pages = {37--72}
}

@article{AK2002,
author = {I.S. Aranson and L. Kramer},
title={The world of the complex Ginzburg-Landau equation},
journal = {Reviews of Modern Physics},
year = {2002},
pages = {99--143},
volume = {74}
}

@article{AACS2025,
  title={Slow passage through the Busse balloon--predicting steps on the Eckhaus staircase},
  author={Asch, A. and Avery, M. and Cortez, A. and Scheel, A.},
  journal={European Journal of Applied Mathematics},
  volume={36},
  number={1},
  pages={1--26},
  year={2025},
  publisher={Cambridge University Press, Cambridge, UK}
}

@article{B1983,
author = {Benoit, E.},
year = {1983},
title = {Systemes lents-rapides dans $\mathbb{R}^3$ et leur canards},
journal={Asterisque},
volume={109-110},
pages = {159--191}
}

@article{BCDD1981,
author = {Benoit, E. and Callot, J.L. and Diener, F. and Diener, M.},
title = {Chasse au canard},
journal = {Collectanea Mathematica},
volume = {31-32},
year = {1981}, 
pages = {37--119}
}

@article{BKW2006,
author = {Br{\o}ns, M. and Krupa, M. and Wechselberger, M.},
year = {2006},
title={Mixed Mode Oscillations Due to the Generalized Canard Phenomenon},
journal={Fields Institute Communications},
volume={49}, 
pages = {39--63}
}

@article{BDHL2023,
author={Brown, C. and Derks, G. and van Heijster, P. and Lloyd, D.J.B.},
title={Analysing transitions from a {T}uring
instability to large periodic patterns
in a reaction-diffusion system},
journal={Nonlinearity},
volume={36},
year={2023}, 
pages={6839–-6878}
}

@article{B1978,
author={Busse, F.H.},
title={Nonlinear properties of thermal convection},
journal = {Reports on Progress in Physics}, 
volume={41},
year={1978}, 
pages = {1929--1967}
}

@book{C1981,
author = {Carr, J.},
title = {Applications of Centre Manifold Theory},
publisher = {Applied Mathematical Sciences series v.35, Springer Verlag, NY},
year= {1981}
}

@book{CE1990,
author = {Collet, P. and Eckmann, J.P.},
title = {Instabilities and Fronts in Extended Systems},
year = {1990},
publisher = {Princeton University Press, Princeton, NJ}
}

@article{CH1993,
author = {Cross, M.C. and Hohenberg, P.C.},
title = {Pattern formation outside of equilibrium},
journal = {Reviews Modern Physics},
volume = {65},
year = {1993},
pages = {851--1112}
}

@article{D1984,
author = {Diener, M.J.}, 
title = {The canard unchained or how fast/slow dynamical systems
bifurcate}, 
journal = {Mathematical Intelligencer},
volume = {6}, 
year = {1984}, 
pages = {38--49}
}

@book{DLMF,
author = {NIST},
publisher = {National Institute of Standards and Technology, Gaithersburg, Maryland},
title={Digital Library of Mathematical Functions, {\tt https://dlmf.nist.gov/}},
year={2010},
pages = {Chap. 9}
}

@article{DES1971,
author = {DiPrima, R. and Eckhaus, W. and Segel, L.A.},
title={Nonlinear wave number interaction in near-critical two-dimensional flows},
journal = {Journal of Fluid Mechanics},
year = {1971},
volume = {49},
pages = {705--744}
}

@inbook{D2019,
author = {Doelman, A.},
title  = {Pattern formation in reaction-diffusion systems -- an explicit approach},
editor =  {M. Peletier and R.A. van Santen and E. Steur},
booktitle = {Complexity Science, An Introduction},
year = {2019},
publisher = {World Scientific Press, Hackensack, NJ},
pages = {129--182}
}

@article{DGK2002,
author = {Doelman, A. and Gardner, R.A. and Kaper, T.J.},
title={A Stability Index Analysis of 1-D Patterns of the Gray-Scott Model},
journal={Memoirs of the AMS},
volume={155 (737)},
year = {2002}
}

@article{DR1996,
author = {Dumortier, F. and Roussarie, R.}, 
title = {Canard cycles and center manifolds}, 
journal = {Memoirs of the American Mathematical Society},
volume = {557},
year = {1996}, 
publisher = {Amer. Math. Soc, Providence, Rhode Island} 
}

@book{E1965,
author = {Eckhaus, W.},
title = {Studies in {N}on-{L}inear {S}tability {T}heory},
publisher = {Springer Tracts in Natural Philosophy,
Springer, Berlin},
volume = {6},
year = {1965}
}

@inbook{E1983,
author = {Eckhaus, W.}, 
title = {Relaxation oscillations including a standard chase on {F}rench ducks}, 
booktitle = {Asymptotic {A}nalysis {II}}, 
series = {Lecture Notes in Mathematics, v. 985},
editor = {Verhulst, F.},
publisher = {Springer, Berlin, Heidelberg},
year = {1983}, 
pages = {449--497}
}

@article{E1993,
author = {Eckhaus, W.},
title = {The {G}inzburg-{L}andau manifold is an attractor},
journal = {Journal of Nonlinear Science},
volume = {3},
year = {1993}, 
pages = {329--348}
}

@book{EK2005,
author = {Edelstein-Keshet, L.},
title = {Mathematical Models in Biology}, 
series = {Classics in Applied Mathematics Series, Number 46}, 
publisher = {Society for Industrial and Applied Mathematics, Philadelphia},
year = {2005}
}

@book{EP1998,
author = {Epstein, I.R. and Pojman, J.A.},
title = {Introduction to Nonlinear Chemical Dynamics},
publisher = {Oxford University Press, Oxford, U.K.},
year= {1998}
}

@article{F1979,
author = {Fenichel, N.},
title= {Geometric singular perturbation theory for ordinary differential equations},
journal = {Journal of Differential Equations},
year = {1979},
volume = {31},
pages = {53--98}
}

@article{GM-orig,
author = {Gierer, A. and Meinhardt, H.},
title = {A theory of biological pattern formation}, 
journal={Kybernetik},
year = {1972},
pages = {30-39},
volume = {12}
}

@book{GS1997,
author = {Gray, P. and Scott, S.K.},
title = {Chemical {O}scillations and {I}nstabilities: Nonlinear {C}hemical {K}inetics},
year = {1997},
publisher = {Oxford University Press, Oxford, UK}
}

@book{GH1983,
author={Guckenheimer, J.  and Holmes, P.},
title = {Nonlinear Oscillations, Dynamical Systems, and Bifurcations of Vector Fields},
year = {1983},
publisher = {Springer, New York}
}

@book{HI2011,
author = {Haragus, M. and Iooss, G.},
title = {Local {B}ifurcations, {C}enter {M}anifolds, and {N}ormal {F}orms in {I}nfinite-{D}imensional {D}ynamical {S}ystems},
year = {2011},
publisher = {Springer, London, Dordrecht, Heidelberg, New York}
}

@article{vH1991,
author = {A. van Harten},
title={On the validity of the Ginzburg-Landau equation},
journal = {Journal of Nonlinear Science},
year = {1991},
volume = {1},
pages = {397--422}
}

@book{HPS1977,
author = {M.W. Hirsch and C.C. Pugh and M. Shub},
title = {{Invariant Manifolds}},
publisher = {Springer Berlin, Heidelberg},
series = {Lecture Notes in Mathematics, v.583},
year = {1977}
}

@article{HH1952,
author = {Hodgkin, A.L. and Huxley, A.F.},
year = {1952}, 
title ={A quantitative description of membrane current and its application to conduction and excitation in nerve},
journal = {Journal of Physiology},
volume={117},
pages = {500-–544} 
}

@book{H2006,
author = {R. Hoyle},
title = {Pattern Formation: An Introduction to Methods},
publisher = {Cambridge University Press},
year = {2006}
}

@article{IMD1989,
author = {Iooss, G. and Mielke, A. and Demay, Y.},
title = {{Theory of steady {G}inzburg-{L}andau equation, in hydrodynamic stability problems}},
journal = {European Journal of Mechanics B Fluids},
volume = {8},
year = {1989}, 
pages = {229--268}
}

@article{IP1993,
author = {Iooss, G. and Peroueme, M.C.},
title = {Perturbed homoclinic solutions in reversible 1:1 resonant vector fields},
year = {1993},
volume = {103},
pages = {62--88},
journal = {Journal of Differential Equations}
}

@article{JDKV2026,
author = {Jencks, R. and Doelman, A. and Kaper, T.J. and Vo, T.},
title = { Stable and unstable spatially-periodic canards created in subcritical singular Turing bifurcations in the Brusselator system},
journal = {Journal of Nonlinear Science},
year = {2026},
volume = {36},
number = {55},
pages = {66 pages}
}

@article{JZ2010,
title = {Nonlinear stability of spatially-periodic traveling-wave
solutions of systems of reaction diffusion equations},
author ={Johnson, M.A. and Zumbrun, K.},
journal = {Annales de l Institut Henri Poincaré C: Analyse Non Linéaire},
volume = {28},
pages = {471-483},
year = {2010},
}

@inbook{J1995,
author = {Jones, C.K.R.T.},
title= {Geometric singular perturbation theory},
year = {1995},
booktitle = {Dynamical Systems}, 
publisher = {Springer Lecture Notes in Mathematics}, 
volume ={1609},
editor = {R. Johnson},
pages = {44--118}
}

@article{K1999,
title = {Regular and Irregular Patterns in Semiarid Vegetation},
author = {Klausmeier, C.A.},
journal = {Science},
year = {1999},
volume = {284},
pages = {1826--1828}
}

@article{K2015,
  title={Spatial localization in dissipative systems},
  author={Knobloch, E.},
  journal={Ann. Rev. Cond. Mat. Phys.},
  volume={6},
  number={1},
  pages={325--359},
  year={2015},
  publisher={Annual Reviews}
}

@article{KS2001,
author = {Krupa, M. and Szmolyan, P.},
title = {Extending geometric singular perturbation theory to non-hyperbolic points -- fold and canard points in two dimensions},
journal = {SIAM Journal on Mathematical Analysis},
volume = {33},
year = {2001}, 
pages = {286--314}
}

@article{KW2010,
author = {Krupa, M. and Wechselberger, M.},
title = {Local analysis near a folded saddle-node singularity},
journal = {Journal of Differential Equations},
volume = {248},
year = {2010}, 
pages = {2841--2888}
}

@book{K1984,
author = {Y. Kuramoto},
title = {Chemical Oscillations, Waves, and Turbulence},
publisher = {Springer Series in Synergetics},
volume = {19},
year = {1984}
}

@article{LE1991,
author = {Lengyel, I. and Epstein, I.R.},
year = {1991}, 
title = {Modeling of Turing structures in the chlorite-iodide-malonic acid-starch reaction systems}, 
journal= {Science}, 
volume= {251},
pages = {650-–652}
}

@article{LE1992,
author = {Lengyel, I. and Epstein, I.R.}, 
year = {1992}, 
title ={A chemical approach to designing Turing patterns in reaction-diffusion system},
journal={Proceedings of the National Academy of Sciences, USA},
volume = {89}, 
pages = {3977-–3979}
}

@book{Me1982,
author = {Meinhardt, H.},
title = {Models of Biological Pattern Formation},
year = {1982},
publisher = {Academic Press, New York}
}

@inbook{M2002,
author = {Mielke, A. },
title = {The Ginzburg-Landau equation in its role as modulation equation},
booktitle = {Handbook of Dynamical Systems},
volume = {2},
editor = {B. Fiedler},
publisher = {North Holland (Elsevier)},
pages = {759--834},
year = {2002}
}

@article{MS1995,
author = {Mielke, A. and Schneider, G.},
title = {Attractors for modulation equations on unbounded domains
-Existence and comparison},
journal = {Nonlinearity},
volume = {8},
pages = {743--768},
year = {1995}
}

@book{MKKR1984,
author = {Mishchenko, E.F and Kolesov, Yu.S. and Kolesov, A.Yu. and Rhozov, N.Kh.}, 
title = {Asymptotic Methods in Singularly Perturbed Systems}, 
series = {Monographs in Contemporary Mathematics},
publisher = {Consultants Bureau, Plenum Publishing, NY},
year = {1984}
}

@article{MW2017,
author = {Mitry, J. and Wechselberger, M.},
title = {Folded saddles and faux canards},
journal = {SIAM Journal on Applied Dynamical Systems},
volume={16},
year = {2017},
pages = {546--596}
}

@article{Moehlis,
author = {J. Moehlis}, 
title = {Canards in a surface oxidation reaction},
journal = {Journal of Nonlinear Science},
volume = {12},
year = {2002}, 
pages = {319--345}
}

@book{M1989,
author = {J.D. Murray},
title = {Mathematical Biology},
year = {1989},
publisher = {Springer Verlag, Berlin}
}

@article{Mu1982,
author={J.D. Murray},
title={Parameter space for Turing instability in reaction-diffusion mechanisms: A comparison of models},
year={1982},
journal={Journal of Theoretical Biology},
volume={98},
pages={143-163}
}

@article{NW1969,
author = {Newell, A. and Whitehead, J.},
title = {Finite bandwidth, finite amplitude convection},
journal = {Journal of Fluid Mechanics},
volume ={38},
pages = {279--303},
year = {1969}
}

@book{N2002,
author={Y. Nishiura},
title = {Far-from-Equilibrium Patterns},
publisher = {Translations of Mathematical Monographs, American Mathematical Society, Providence},
year= {2002},
volume = {209}
}

@article{Pearson1993,
author={Pearson, J.E.},
journal={Science},
volume={261},
pages = {189--192},
title={Complex Patterns in a Simple System},
year={1993}
}

@article{Pena2001,
    author = {Pe\~{n}a, B. and Perez-Garcia, C.},
    title = {Stability of {T}uring patterns in the {B}russelator model},
    journal = {Physical Review E},
    volume = {64},
    number = {056213},
    year = {2001}
}

@article{PL1968,
author = {Prigogine, I. and Lefever, R.},
title = {Symmetry breaking instabilities in dissipative systems II},
year = {1968},
journal = {Journal of Chemical Physics},
volume = {48},
pages = {1695--1700}
}

@article{dRDR2016,
  title={Spectra and stability of spatially periodic pulse patterns: Evans function factorization via Riccati transformation},
  author={de Rijk, B. and Doelman, A. and Rademacher, J.},
  journal={SIAM Journal on Mathematical Analysis},
  volume={48},
  number={1},
  pages={61--121},
  year={2016},
  publisher={SIAM}
}

@article{RRW2015,
author= {Roberts, K.-L. and Rubin, J.E. and Wechselberger, M.},
year = {2015},
journal = {SIAM Journal on Applied Dynamical Systems},
volume = {14},
pages = {1808--1844},
title = {Averaging, Folded Singularities, and Torus Canards: Explaining Transitions between Bursting and Spiking in a Coupled Neuron Model}
}

@article{RKW2008,
title = {Canard Induced Mixed-Mode Oscillations in a Medial Entorhinal Cortex Layer II Stellate Cell Model},
author = {Rotstein, H. and Kopell, N. and Wechselberger, M.},
year = {2008},
journal = {SIAM Journal on Applied Dynamical Systems},
volume = {7},
pages ={1582--1611}
}

@article{RKZE2003,
author = {Rotstein, H.G. and Kopell, N. and Zhabotinsky, A.M. and Epstein, I.R.},
title = {Canard phenomenon and localization of oscillations in the {B}elousov-{Z}habotinsky reaction with global feedback},
journal = {Journal of Chemical Physics},
volume = {119},
year = {2003}, 
pages = {8824--8832}
}

@article{S1979,
author = {Schnakenberg, J.},
title = {Simple chemical reaction systems with limit cycle behaviour},
journal = {Journal of Theoretical Biology},
volume = {81},
year = {1991},
pages = {389--400}
}

@article{S1996,
title = {Diffusive stability of spatial periodic solutions
of the Swift-Hohenberg equation},
author = {G. Schneider},
journal = {Communications in Mathematical Physics},
year = {1996},
pages = {679--702},
volume = {178}
}

@article{S1998,
author = {G. Schneider},
title = {Nonlinear diffusive stability of spatially periodic solutions –
Abstract theorem and higher space dimensions},
journal = {Tohoku University Mathematical Publications},
pages = {159–167},
volume = {8},
year = {1998}
}

@book{SU2017,
author = {Schneider, G. and Uecker, H.},
title = {Nonlinear {PDE}s: {A} {D}ynamical {S}ystems {A}pproach},
year = {2017},
series = {Graduate Texts in Mathematics},  
publisher = {American Mathematical Society, Providence, RI},
volume = {182}
}

@book{S1983,
author = {J. Smoller},
title = {Shock Waves and Reaction-Diffusion Equations},
series = {Grundlehren der Mathematische Wissenshaften}, 
publisher = {Springer, Berlin},
year = {1983},
volume = {258}
}

@inbook{ST2001,
    author = {C. Soto-Trevino},
    title = {A geometric method for periodic orbits in singularly-perturbed systems}, 
    booktitle = {Multiple-time-scale Dynamical Systems}, 
    volume = {122},
    series = {IMA Volumes on Mathematics and its Applications},
    pages = {141–-202},
    editor = {Jones, C.K.R.T. and Khibnik, A.},
    publisher = {Springer, New York}, 
    year = {2001}
}

@article{SDHR2013,
author = {S. van der Stelt and A. Doelman and G.M. Hek and J.D.M. Rademacher},
title = {Rise and fall of periodic patterns for a generalized Klausmeier-Gray-Scott model},
journal = {Journal of Nonlinear Science},
year = {2013},
pages = {39--95},
volume = {23}
}

@article{SS1971,
author = {K. Stewartson and J.T. Stuart},
title = {A nonlinear instability theory for a wave system in plane Poiseuille flow},
journal = {Journal of Fluid Mechanics},
year = {1971},
pages = {529--545},
volume = {48}
}

@article{SZJV2018,
author = {Sukhtayev, A. and Zumbrun, K. and Jung, S. and Venkatraman, R.},
title = {Diffusive stability of spatially periodic solutions of the {B}russelator model},
journal = {Communications in Mathematical Physics},
volume = {358},
year = {2018},
pages = {1--43}
}

@article{SW2001,
author={Szmolyan, P. and Wechselberger, M.}, 
title = {Canards in $\mathbb{R}^3$},
journal={Journal of Differential Equations},
volume={177},
year = {2001},
pages = {419--453}
}

@inbook{T1976,
author = {Takens, F.},
title = {Constrained equations: a study of implicit differential equations and their discontinuous solutions}, 
booktitle = {Structural stability, the theory of catastrophes, and applications in the sciences}, 
editor = {Hilton, P.},
publisher = {Lecture Notes in Mathematics, Springer-Verlag, Berlin},
volume={525},
pages = {143-–234},
year={1976}
}

@article{S2003,
author = {van Saarloos, W.},
title = {Front propagation into unstable states},
journal = {Physics Reports},
volume = {386},
year = {2003}, 
pages = {29--222}
}

@article{VDK2025,
author={Vo, T. and Doelman, A. and Kaper, T.J.},
title={Les {C}anards de {T}uring},
journal = {SIAM Journal on Applied Dynamical Systems},
volume = {24},
pages = {2618-2684},
year = {2025}
}

@article{VW2015,
author = {Vo, T. and Wechselberger, M.},
title = {Canards of Folded Saddle-Node Type I},
journal = {SIAM Journal on Mathematical Analysis},
volume ={47},
year = {2015},
pages = {3235--3283}
}

@book{W1997,
author = {Walgraef, D.},
title = {Spatio-temporal Pattern Formation},
year = {1997},
publisher = {Springer-Verlag, New York, Berlin, Heidelberg}
}
\end{document}